\algnewcommand\algorithmicinput{\textbf{INPUT:}}
\algnewcommand\INPUT{\item[\algorithmicinput]}
\algnewcommand\algorithmicoutput{\textbf{OUTPUT:}}
\algnewcommand\OUTPUT{\item[\algorithmicoutput]}
\newcommand{\cX}{\{X_i\}_{i=1}^n}
\newtheorem{theorem}{Theorem}
\newtheorem{lemma}[theorem]{Lemma}
\newtheorem{proposition}[theorem]{Proposition}
\newtheorem{corollary}[theorem]{Corollary}
\newtheorem{definition}{Definition}
\newtheorem{remark}{Remark}
\newtheorem{assumption}{Assumption}
\newcommand{\tS}{\widetilde S^{s,e}}
\newcommand{\tSigma}{\widetilde \Sigma^{s,e}}
\newcommand{\tf}{\widetilde f^{s,e}}
\newcommand{\p}{\mathcal P}
\newcommand{\kse}{\kappa^{s,e}_{\max}}
\DeclareMathOperator*{\argmax}{arg\,max}
\DeclareMathOperator*{\op}{op}
\date{\vspace{-5ex}}
\title{Optimal Covariance Change Point Localization in High Dimensions}
\author[1]{Daren Wang}
\author[2]{Yi Yu}
\author[1]{Alessandro Rinaldo}
\affil[1]{\small Department of Statistics and Data Science, Carnegie Mellon University}
\affil[2]{\small School of Mathematics, University of Bristol}
\begin{document}
\maketitle

\begin{abstract}
We study the problem of change point detection for covariance matrices in high dimensions. We assume that we observe a sequence $\{X_i\}_{i=1,\ldots,n}$ of independent and centered $p$-dimensional sub-Gaussian random vectors whose covariance matrices are piecewise constant.  Our task is to recover with high accuracy the number and locations of the change points, which are assumed unknown.  Our generic model setting allows for all the model parameters to change with $n$, including the dimension $p$, the minimal spacing between consecutive change points $\Delta$, the magnitude of smallest change $\kappa$ and the maximal Orlicz-$\psi_2$ norm $B$ of the covariance matrices of the sample points.  Without assuming any additional structural assumption, such as low rank matrices  or having sparse principle components, we set up a general framework and a benchmark result for the covariance change point detection problem.

We introduce two procedures, one based on the binary segmentation algorithm \citep[e.g.][]{vostrikova1981detection} and the other on its extension known as wild binary segmentation of \cite{fryzlewicz2014wild}, and demonstrate that, under suitable conditions, both procedures are able to consistently estimate the number and locations of change points. Our second algorithm, called Wild Binary Segmentation through Independent Projection (WBSIP), is shown to be optimal in the sense of allowing for the minimax scaling in all the relevant parameters. Our minimax analysis reveals a phase transition effect based on the problem of change point localization.  To the best of our knowledge, this type of results has not been established elsewhere in the high-dimensional change point detection literature.
\vskip 5mm
\textbf{Keywords}: Change point detection;  High-dimensional covariance testing; Binary segmentation; Wild binary segmentation; Independent projection; Minimax optimal. 
\end{abstract}

\section{Introduction}

Change point detection in time series data has a long history and can be traced back to World War II, during which the change point detection was specifically demonstrated in a sequential fashion and heavily used in quality control.  \cite{Wald1945} was published shortly after the war and formally introduced sequential probability ratio test (SPRT), which consists of simple null and alternative hypotheses and is based on sequentially calculating the cumulative sum of log-likelihood ratios.  \cite{Page1954} relaxes the statistic to the cumulative sum of weights, which are not restricted to log-likelihood ratios, namely cumulative sum (CUSUM).  Since then, a tremendous amount of efforts have been dedicated to this problem \citep[see, e.g.][]{JamesEtal1987,SiegmundVenkatraman1995} with applications ranging from clinical trials to education testing, signal processing and cyber security.

In the last few decades, with the advancement of technology, an increasing demand from emerging application areas, e.g. finance, genetics, neuroscience, climatology, has fostered the development of statistical theories and methods for change point detection also in an \emph{offline} fashion.  Given a time series data set, instead of assuming it is stationary over the whole time course, it is more robust and realistic to assume that it is stationary only within time segments, and the underlying models may change at certain time points.  Formally, we assume that we observe $n$ variables $X_1, \ldots, X_n$ such that,
	\begin{equation}\label{eq-intro-1}
		X_i \sim \begin{cases}
 		F_1, & i = 1, \ldots, t_1 -1, \\
 		F_2, & i= t_1, \ldots, t_2 -1,\\
 		\ldots, \\ 
 		F_K, & i = t_{K-1}, \ldots, n, \\
 	\end{cases}	
	\end{equation}
	where $F_1, \ldots, F_K$, $K \geq 2$, are distribution functions, $F_{k-1} \neq F_k$, $k =2, \ldots, K$, and $\{t_1, \ldots, t_{K-1}\} \subset \{2, \ldots, n-1\}$ are unknown change point locations.  Testing the existence of change points and estimating the locations of the change points are of primary interest.
	
The simplest and best-studied scenario of model~\eqref{eq-intro-1} is that $\{X_i = f_i + \varepsilon_i\}_{i=1}^n$ is a univariate time series, and $\{f_i\}_{i=1}^n$ is a piecewise constant signal.  Change point detection here means detecting changes in the mean of a univariate time series.  An abundance of literature exists on this model and variants thereof. See \Cref{sec:literature} below for a literature review.

The properties of the covariance are also of key theoretical and practical importance in statistics, and detecting the covariance changes in a time series data set also has a long history. See \Cref{sec:literature} below for some literature review.  In this paper we investigate the problem of change point detection and localization for covariance matrices. We consider what is arguably the most basic setting for the problem in \eqref{eq-intro-1}: we observed a sequence $\{X_1, \ldots , X_n\}$ of independent and centered random vectors in $\mathbb{R}^p$ with respective covariance matrices $\{\Sigma_1, \ldots,\Sigma_n\}$ with that $\max_i \| \Sigma_i \|_{\psi_2} \leq B$.  We assume  that $\Sigma_i = \Sigma_{i+1}$ for all time points $i$ except for a few, which we refer to as the change points. We denote with $\Delta$ the minimal distance between two consecutive change points and with $\kappa$ the magnitude of the smallest change, measured by the operator norm of the difference between the covariance matrices at two consecutive change points. The parameters $p$, $\Delta$, $B$ and $\kappa$ completely characterize the change point localization problem's difficulty, which, for a given $n$, is increasing in $p$ and $B$ and decreasing in $\Delta$ and $\kappa$. In fact, for reason that will become clear later on, it will be convenient to aggregate the parameters $B$ and $\kappa$ into one parameter 
	\[
    \frac{\kappa}{B^2},
	\]
	which effectively plays the role of the signal-to-noise ratio. Our goal is to identify all the change points and to locate them, i.e. to estimate their values accurately.

\subsection{List of contributions}

In the following, we summarize the main contributions of this paper.

\begin{enumerate}

\item We describe and analyze two algorithms for covariance change point localization.   The first one, called BSOP (Binary Segmentation in Operator Norm), is based on an adaptation to the covariance setting of the popular binary segmentation (BS) algorithm \citep[e.g.][]{vostrikova1981detection} for change point detection of the mean of a univariate signal. The BSOP algorithm is rather simple to implement and fast. Under appropriate assumptions, we show in \Cref{prop-bs} that BSOP can consistently estimate all the change points, but with a localization rate that is sub-optimal, especially, when the dimension $p$ is allowed to grow with $n$. This finding is consistent with the corresponding analysis of the BS algorithm for the univariate mean localization problem contained in \cite{fryzlewicz2014wild}, who showed that the BS algorithm is only consistent but possibly sub-optimal.  Our second algorithm, called WBSIP (Wild Binary Segmentation through Independent Projections), is significantly more refined and yields much sharper, in fact minimax rate-optimal, localization rates than BSOP under a set of different and milder assumptions; see \Cref{thm:wbsrp}. The WBSIP procedure combines data splitting and independent projections, which have been shown to be effective for mean change point localization in high-dimensional setting by  \cite{wang2016high}, with the wild binary segmentation (WBS) procedure of \cite{fryzlewicz2014wild}.    We  emphasize that WBSIP is fundamentally different in its mechanics and goals from the method of \cite{wang2016high} and that the theoretical analysis of both the BSOP and the WBSIP procedures, though heavily inspired by \cite{fryzlewicz2014wild}, is significantly more involved  and delivers sharper localization rates. See below for a more detailed comparison with these two aforementioned references.

\item We obtain a lower bound on the localization rate for the problem at hand and demonstrate that WBSIP is in fact minimax rate-optimal, up to a $\log(n)$ factor.    Interestingly, our analysis reveals a phase transition effect over the space of the model parameters: if $\Delta = O \left(B^4 p \kappa^{-2} \right)$ then no consistent estimator of the locations of the change points exists. On the other hand, if $\Delta = \Omega\left(  B^4 p \kappa^{-2} \log(n) \right)$ then WBSIP will guarantee a localization rate of the order $B^4 \kappa^{-2}\log(n)$, which, up to a $\log(n)$ term, is minimax optimal.  
	While consistency of change point estimation for high dimension mean vectors and covariance matrices has been recently tackled by several authors \citep[see, e.g.,][]{BaranowskiEtal2016,wang2016high,AueEtal2009, avanesov2016change}, to the best of our knowledge, this phase transition effect has not been established elsewhere. Overall, our lower bound results and the upper bound on the localization rate afforded by WBSIP procedure provide a complete characterization of the problem of change point localization in the covariance setting described above.


\item In our analysis, we rely on finite sample bounds and allow all the relevant parameters, namely $p$, $\Delta$ and $\frac{\kappa}{B^2}$ to change with $n$. We also do not make any structural assumption on the covariance matrices, such as low-rank or sparsity. Overall, our framework is general, and enables our results to serve as theoretical benchmarks for the study of other covariance change point detection problems.

\end{enumerate}

\subsection{Relevant and related literature}\label{sec:literature}
For the classical problems of change point detection and localization of the mean of a univariate time series, least squares estimation is a natural choice. For example, \cite{YaoAu1989} uses the least squares estimators to show that the distance between the estimated change points and the truths are within $O_p(1)$. \cite{Lavielle1999} and \cite{LavielleMoulines2000} consider penalized least squares estimation and prove consistency in the presence of strong mixing and long-range dependence of the error terms $\varepsilon_i$'s. \cite{HarchaouiLevy2010} consider the least squares criterion with a total variation.

Besides least squares estimation based methods, other attempts have also been made.  For instance, \cite{DavisEtal2006} propose a model based criterion on an autoregressive model. \cite{Wang1995} detects the change points using fast discrete wavelet transform.  \cite{Harchoui}, \cite{Qian_Jia}, \cite{Rojas} and \cite{KevinNIPS} consider the fused lasso \citep{TibshiraniEtal2005} to penalize the differences between the signals at two consecutive points to detect the change points.  \cite{FrickEtal2014} introduce a multiscale statistic based procedure, namely simultaneous multiscale change point estimator, to detect change points in piecewise constant signals with errors from exponential families.  More recently, \cite{LiEtal2017} extends the multiscale method to a class of misspecified signal functions which are not necessarily piecewise constants.  The method of \cite{davies2001} can also be used to determine the locations of change points in a piecewise constant signal. \cite{chan2009} study the power of the likelihood ratio test statistics to detect the presence of change points. 

Among all the methods, BS \citep[e.g.][]{vostrikova1981detection} is `arguably the most widely used change-point search method' \citep{KillickEtal2012}.  It goes through the whole time course and searches for a change point.  If a change point is detected, then the whole time course is split into two, and the same procedure is conducted separately on the data sets before and after the detected change point.  The procedure is carried on until no change point is detected, or the remaining time course consists of too few time points to continue the test.  \cite{venkatraman1992consistency} proves the consistency results of the BS method in the univariate time series mean change point detection, with the number of change points allowed to increase with the number of time points.  

It is worth to mention a variant of BS, namely WBS, which is proposed in \cite{fryzlewicz2014wild} and which can be viewed as a flexible moving window techniques, or a hybrid of moving window and BS.  Instead of starting with the whole data set and doing binary segmentation, WBS randomly draws a collection of intervals under certain conditions, conducts BS on each interval, and return the one which has the most extreme criterion value among all the intervals.  Compared to BS, under certain conditions, WBS is more preferable when multiple change points are present.  In the univariate time series mean change point detection problem, \cite{venkatraman1992consistency} shows that in order to achieve the estimating consistency using BS algorithm, the minimum gap between two consecutive change points should be at least of order $n^{1-\beta}$, where $n$ is the number of time points, and $0 \leq \beta < 1/8$; as claimed in \cite{fryzlewicz2014wild}, by using WBS algorithm, this rate can be reduced to $\log(n)$.

All the literature mentioned above tackles univariate time series models, however, in the big data era, data sets are now routinely more complex and often appear to be multi- or high-dimensional, i.e. $X_i \in \mathbb{R}^p$, where $p$ is allowed to grow with the number of data points $n$.  \cite{HorvathHuskova2012} propose a variant of the CUSUM statistic by summing up the square of the CUSUM statistic in each coordinate.  \cite{ChoFryzlewicz2012} transform a univariate non-stationary time series into multi-scale wavelet regime, and conduct BS at each scale in the wavelet context.  \cite{Jirak2015} allows $p$ to tend to infinity together with $n$, by taking maxima statistics across panels coordinate-wise.  \cite{ChoFryzlewicz2015} propose sparsified binary segmentation method which aggregates the CUSUM statistics across the panel by adding those which exceed a certain threshold.  \cite{Cho2015} proposes the double CUSUM statistics which, at each time point, picks the coordinate which maximizes the CUSUM statistic, and \emph{de facto} transfers the high-dimensional data to a univariate CUSUM statistics sequence.  \cite{AstonKirch2014} introduces the asymptotic concept of high-dimensional efficiency which quantifies the detection power of different statistics in this setting. \cite{wang2016high} study the problem of estimating the location of the change points of a multivariate piecewise-constant vector-valued function under appropriate sparsity assumptions on the number of changes.

As for change point detection in more general scenarios, the SPRT procedure \citep{Wald1945} can be easily used for the variance change point detection.  Based on a generalized likelihood ratio statistic, \cite{BaranowskiEtal2016} tackle a range of univariate time series change point scenarios, including the variance change situations, although theoretical results are missing.  \cite{Picard1985} proposes tests on the existence of change points in terms of spectrum and variance.  \cite{InclanTiao1994} develop an iterative cumulative sums of squares algorithm to detect the variance changes.  \cite{GombayEtal1996} propose some tests on detection of possible changes in the variance of independent observations and obtain the asymptotic properties under the non-change null hypothesis.  \cite{BerkesEtal2009}, among others, extend the tests and corresponding results to linear processes, as well as ARCH and GARCH processes.  \cite{AueEtal2009} considers the problem of variance change point detection in a multivariate time series model, allowing the observations to have $m$-dependent structures.  Note that the consistency results in \cite{AueEtal2009} are in the asymptotic sense that the number of time points diverges and the dimension of the time series remains fixed.  \cite{AueEtal2009} also require the existence of good estimators of the covariance and precision matrices, and the conditions thereof are left implicit.  \cite{BarigozziEtal2016} deal with a factor model, which is potentially of high dimension $p/n = O(\log^2(n))$, and use the wavelet transforms to make the data possibly dependent across the timeline.  Note that the model in \cite{BarigozziEtal2016} can be viewed as a specific covariance change point problem, where the additional structural assumption allows the dimensionality to go beyond the sample size. 

As for the problem of hypothesis testing for high dimensional covariance matrices, which corresponds to the problem of change point detection, the literature is also abundant.  In the cases where $p$ fixed and $n\to \infty$, the likelihood ratio test \citep{Anderson2003} has a $\chi^2_{p(p+1)/2}$ limiting distribution under $H_0: \Sigma = I$.  When both $n, p \to \infty$ and $p/n \to c \in (0, \infty)$, \cite{Johnstone2011} extended Roy's largest root test \citep{Roy1957} and derived the Tracy--Widom limit of its null distribution, to name but a few.  In the case where $n, p \to \infty$ and $p/n \to \infty$, \cite{BirkeDette2005} derived the asymptotic null distribution of the Ledoit--Wolf test \citep{LedoitWolf2002}.  More recently, \cite{cai2013optimal} studied the testing problem in the setting $p \to \infty$ from a minimax point of view and derived the testable region in terms of Frobenius norm is of order $\sqrt{p/n}$.

\vskip 3mm

It is important to highlight the differences between this paper and two closely related papers: \cite{wang2016high} and \cite{fryzlewicz2014wild}.
\begin{itemize}
\item \cite{wang2016high} investigate  a completely different change point detection problem, targeting (sparse) means and not covariances.  Furthermore, even though our main algorithm, based on sample splitting and random projections, is inspired by the methodology proposed in \cite{wang2016high}, it is completely different in its design, properties and goals.  In particular, unlike WBSIP, the algorithm in \cite{wang2016high} relies on semidefinite programming and is indirectly targeting the recovery of the sparse support of a mean vector. The assumptions we make are also rather different: in particular, we do not make any structural assumptions, such as sparsity, nor do we require any eigengap condition.  As a result, the theoretical analysis of our algorithms is also different, and a direct adaptation of their results to our setting will lead to sub-optimal rates.  

\item The WBS algorithm put forward by \cite{fryzlewicz2014wild} is a powerful and flexible methodology for change point localization, and is a key element of our WBSIP procedure, as well as the procedure developed by \cite{wang2016high}. WBS is specifically designed for univariate time series with piece-wise constant mean, whereby the signal strength is decoupled from the noise variance and the optimal solution remains translation invariant.  These features are not present in the more complex covariance setting. Thus, the advantageous properties of the WBS procedure, as demonstrated in \cite{fryzlewicz2014wild}, need not directly apply   to our setting. 
In fact, the accompanying theoretical results about WBS in \cite{fryzlewicz2014wild} are not well suited for our purposes, as they would not directly lead to the optimal rates we manifest for WBSIP (and neither would the alternative analysis of WBS presented \cite{wang2016high}). As a result, we have carried out a more refined and sharper analysis of the performance of WBS  that allows us to derive an optimal dependence on all the model parameters (especially $\kappa$). This improvement is far from trivial and may be of independent interest, as it is also immediately applicable to the mean change point detection problem itself (although we do not pursue this direction here).
\end{itemize}

\subsubsection*{Organization of the paper}

The rest of the paper is organized as follows.  In Section~\ref{section:results}, we propose two different methods for covariance change point detection problem.  All three methods are shown to be consistent, but under different conditions, and the wild binary segmentation with independent projection has the location error rate being $\log(n)$.  In Section~\ref{section:lb}, we show the lower bound of the location error rate in the covariance change point detection problem is $\log(n)$, which implies that the wild binary segmentation with independent projection is minimax optimal.  Further discussion and future work directions can be found in Section~\ref{section:dis}.

\subsubsection*{Notation}
For a vector $v\in \mathbb R^p$ and matrix $\Sigma\in\mathbb R^{p\times p}$, $\|v\|$ and $\|\Sigma\|_{\mathrm{op}} = \max_{\|v\|=1} |v^{\top}\Sigma v|$ indicate the Euclidean and the operator norm, respectively; for any integrable function $f(\cdot): \mathbb{R} \mapsto \mathbb{R}$, denote $\|f\|_1 =\int_{x\in \mathbb{R}} |f(x)|\, dx $ as the $\ell_1$-norm of $f(\cdot)$.  

\section{Main Results}
\label{section:results}

In this paper, we study the covariance change point detection in high dimension.  To be specific,  we consider a centered and independent time series $\{X_i\} _{i=1}^n \subset \mathbb R^{p}$.  Let $\{\eta_k\}_{k=1}^K \subset \{1,\ldots, n\}$ be the collection of time points at which the covariance matrices of $X_i$'s change.  The model is formally summarized as follows with Orlicz norm $\|\cdot\|_{\psi_2}$, which is defined in \Cref{def-orlicz} in \Cref{section:probability}.

\begin{assumption}\label{assume:model}
	Let $X_1,\ldots, X_n \in \mathbb{R}^p$ be independent sub-Gaussian random vectors such that $\mathbb{E}(X_i)=0$, $\mathbb{E}(X_iX_i^{\top}) = \Sigma_i$ and
	$\|X_i\|_{\psi_2}\le B $ for all $i$. Let  $\{\eta_k\}_{k=0}^{K+1} \subset \{0, \ldots, n\}$ be a collection of change points, such that  $\eta_0=0 $ and $\eta_{K+1}=n$ and that  
		\[
		\Sigma_{\eta_{k} +1} =\Sigma_{\eta_{k} +2} = \ldots  =\Sigma_{\eta_{k+1}},  \text{ for any } k = 0, \ldots, K.
		\]
	Assume the jump size $ \kappa = \kappa(n)$ and the spacing $\Delta = \Delta(n)$ satisfy that
		\[
		\inf_{k = 1, \ldots, K+1} \{\eta_k-\eta_{k-1}\}\ge\Delta > 0,
		\]
		and
		\[
		\|\Sigma_{\eta_k} -\Sigma_{\eta_{k-1} } \|_{\mathrm{op}} = \kappa_{k} \geq \kappa > 0, \text{ for any }  k = 1, \ldots, K+1.
		\]
\end{assumption}

\begin{remark}[{\bf The parameter $B$}]
    The assumption that  $\max_i \|X_i\|_{\psi_2}\le B$ is imposed in order to control the order of magnitude of the random fluctuations of the CUSUM covariance statistics, given below in \Cref{def-1}, around its means. See \Cref{lemma:a1} in the Appendix for details.  At the same time, the chain of inequalities \eqref{eq-kappa-B} shows also that $\max_i \| \Sigma_i \|_{\mathrm{op}} \leq 2 B^2$, so that the
   same assumptions amount also to a uniform upper bound on the operator norm of 
   the covariance matrices of the data points. In this regard,
   $B$ may be reminiscent of the
    assumption that the signal be of bounded magnitude sometimes used in the
    litarature on  mean change point detection: see, e.g.
    \citet[][Assumption~3.1(ii)]{fryzlewicz2014wild} and \citet[][Condition
    (iii) on page 11]{venkatraman1992consistency}. 
    However, while in the mean change point detection problem such boundedness  assumption
    can be in fact removed
     because the optimal solution is translation
    invariant \citep[see, e.g.,][]{wang2016high}, this is not the case in the present
    setting. Indeed, a constant shift in the largest eigenvalue of the population covariance matrices
    also implies a change in the precision with which such matrices can be
    estimated. Thus, it is helpful to think of $B^2$ as some form of variance
    term.
\end{remark}

\begin{remark}[{\bf Relationship between $\kappa$ and $B$}]
    The parameters $\kappa$ and $B$ are not variation independent, as they
    satisfy the inequality $\kappa \leq  B^2/4$. In fact, 
    \begin{equation}
	\kappa \leq \max_{k=1}^K \|\Sigma_{\eta_k} - \Sigma_{\eta_{k-1}}\|_{\mathrm{op}} \leq
 2  \max_{i=1}^n\|\Sigma_i\|_{\mathrm{op}} = 2\max_{i=1}^n \sup_{v \in \mathcal{S}^{p-1}} \mathbb{E}\bigl[(v^{\top}X_i)^2\bigr] \leq 4\max_{i=1}^n\|X_i\|_{\psi_2}^2 \leq 4B^2,
 \label{eq-kappa-B}
\end{equation}
where the second-to-last inequality follows from \Cref{eq:bound.orlictz} in
\Cref{section:probability}. For ease of readability, we will instead use the weaker
bound
$
\kappa \le B^2
$
throughout. In fact, in our analysis we will quantify the combined effect of
both $\kappa$ and $B$
with their ratio $\frac{\kappa}{B^2}$, which we will refer to as the signal-to-noise
ratio. Larger values of such quantities lead to better performance of our
algorithm. It is important to notice that the signal-to-noise ratio, and the
task  of change point detection itself, remains invariant
with respect to any multiplicative rescaling of the data by an arbitrary
non-zero constant. 
\end{remark}

In \Cref{assume:model}, all the relevant parameters $p$, $\Delta$,
$K$, $B$ and $\kappa$ are allowed to be functions of the sample size $n$,
although we do not make this dependence explicit in our notation for ease of
readability. This generic setting allows us to study the covariance change point problem with potentially high-dimensional data, with growing number of change points and decreasing jump sizes.  

Motivated by the univariate CUSUM statistic for mean change point detection, we define the CUSUM statistic in the covariance context.

\begin{definition}[{\bf Covariance CUSUM}]\label{def-1} For $X_1, \ldots, X_n \in
    \mathbb{R}^p$, a pair of integers $(s, e)$ such that $0 \leq s < e-1 < n$,
    and any $t \in \{s+1, \ldots, e-1\}$,  the covariance CUSUM statistic is
    defined as
	\begin{equation*}
		\widetilde S_{t}^{s,e} =\sqrt{\frac{e-t}{(e-s)
		(t-s)}}\sum_{i=s+1}^{t}X_iX_i^{\top}- \sqrt{\frac{t-s}{(e-s)
		(e-t)}} \sum_{i=t+1}^{e} X_iX_i^{\top}. 
	\end{equation*}
	Its expected value is
	\begin{equation*}
		\widetilde \Sigma_{t}^{s,e} =\sqrt{\frac{e-t}{(e-s) (t-s)}}\sum_{i=s+1}^{t}\Sigma_i- \sqrt{\frac{t-s}{(e-s) (e-t)}} \sum_{i=t+1}^{e} \Sigma_i .
	\end{equation*}

\end{definition}

In the rest of this section, we propose two algorithms to detect the covariance change points.  Detailed algorithms and the main consistency results are presented in this section, with the proofs provided in the Appendices.  
The advantages of each algorithm will be discussed later in the section.

\subsection{Consistency of the BSOP algorithm}
\label{sec:bs wbs}
We begin our study by analyzing the performance of a direct adaptation of the binary segmentation algorithm to the matrix setting based on the distance induced by the operator
norm. The resulting algorithm, which we call BSOP, is given in \Cref{alg:BSOP}.
The BSOP procedure works as follows: given any time
interval $(s, e)$, BSOP first computes the maximal operator norm of the
covariance CUSUM statistics over the time points in $(s + \lceil p \log (n)
\rceil,e - \lfloor p \log (n) \rfloor )$; if such maximal
value exceeds a predetermined threshold
$\tau$, then BSOP will identify the location $b$ of the maximum as a change point. The
interval $(s, e)$ is then split into two subintervals at $b$ and the procedure
is then iterated separately on each of the resulting  subintervals $(s, b)$ and
$(b, e)$ until an appropriate stopping condition is met. 

The BSOP algorithm differs
from the standard BS implementation in one aspect: the maximization of
the of the operator
norm of the CUSUM covariance operator is carried out only over the time points in $(s,e)$
that are away by at least $p \log (n)$ from the
endpoints of the interval. 
Such
restriction is needed to obtain adequate tail bounds for the operator norm
of the covariance
CUSUM statistics $\widetilde{S}_{t}^{s,e}$ given in \Cref{def-1} and of the centered
and weighted empirical covariance matrices.  See
\Cref{lemma:a1} in \Cref{section:probability}.

\begin{algorithm}[htbp]
\begin{algorithmic}
	\INPUT $\{X_i \}_{i=s+1}^e \subset \mathbb{R}^{p\otimes (e-s)}$, $\tau > 0$.
	\State {\bf Initial} $\mathrm{FLAG} \leftarrow 0$,
	\While{$e-s > 2p\log(n) + 1$ and $\mathrm{FLAG} = 0$}
		\State $a \leftarrow \max_{\lceil s+ p\log(n) \rceil \leq t \leq \lfloor e-p\log(n) \rfloor} \bigl\| \tS_t \bigr\|_{\mathrm{op}}$
		\If{ $a \leq \tau$}
			\State $\mathrm{FLAG} \leftarrow 1$
			\Else
				\State $b \leftarrow \arg\max_{\lceil s+ p\log(n) \rceil \leq t \leq \lfloor e-p\log(n) \rfloor}  \bigl\| \tS_t \bigr\|_{\mathrm{op}}$
				\State add $b$ to the collection of estimated change points
				\State $\mathrm{BSOP} ((s,b-1),\tau)$
				\State $\mathrm{BSOP} ((b,e),\tau)$
		\EndIf
	\EndWhile
	\OUTPUT The collection of estimated change points.
\caption{Binary Segmentation through Operator Norm. $\mathrm{BSOP}((s,e), \tau)$ }
	\label{alg:BSOP}
\end{algorithmic}
\end{algorithm}

To analyze the performance of the BSOP algorithm we will impose the following
assumption, which is, for the most part, modeled after Assumption 3.2 in
\cite{fryzlewicz2014wild}.  

\begin{assumption}\label{ass-delta-bs}
For a sufficiently large constant $C_\alpha > 0$ and sufficient small constant $c_\alpha > 0$, assume that $\Delta\kappa B^{-2} \ge C_\alpha n^{\Theta}$, $ p \le c_\alpha n^{8\Theta -7}/\log(n)$, where $\Theta \in(7/8,1]$.
\end{assumption}

When the parameters $\kappa$ and
$B$ are fixed, the above assumption requires $\Delta$, the minimal spacing
between consecutive change point, to be of at least slightly smaller
order than $n$, the size of the time series. This is precisely Assumption 3.3
in 
\cite{fryzlewicz2014wild}  \citep[see also][]{ChoFryzlewicz2015}.  The fact that
$\Delta$ cannot be too small compared to $n$ in order for the BS algorithm to
exhibit good performance is well known:  see, e.g., \cite{olshen2004circular}.  In
\Cref{ass-delta-bs}, we require also the dimension $p$ to be upper bounded by
$\frac{n^{8\Theta -7}}{\log (n)}$, which means that $p$ is allowed to diverge as $n \to \infty$.

\begin{remark}[{\bf Generalizing \Cref{ass-delta-bs}}] 
In \Cref{ass-delta-bs} we impose certain constraints on the scaling of the quantities $B$,
$\kappa$, $\Delta$ and $p$ in relation to $n$ that are captured by a single parameter
$\Theta$, whose admissible values lie in $(7/8,1]$. The strict lower bound of
$7/8$ on
$\Theta$ is determined by the calculations outlined below on
\eqref{eq:theorem1 first basic} and \eqref{eq:theorem1 second basic}, which are needed to
ensure the existence of a non-empty range of value for the input parameter $\tau$
to the BSOP algorithm. 
In fact, \Cref{ass-delta-bs} may be generalized 
by allowing for different types of scaling in $n$ of the signal-to-noise ratio $\kappa B^{-2}$,
 the minimal distance $\Delta$ between consecutive change points and
the dimension $p$. In detail,  we may require that
$\kappa B^{-2} \succeq n^{\Theta_1}$, $ \Delta \succeq n^{\Theta_2}$ and $p \log (n) \preceq n^{\Theta_3}$ for a given triplet of parameters
$(\Theta_1,\Theta_2,\Theta_3)$ in an appropriate subset of
$[0,1]^{\otimes 3}$.  Such a generalization would then lead to consistency rates in $n$ that
depend on all these parameters simultaneously. However, the range of allowable values of $(\Theta_1, \Theta_2, \Theta_3)$ is
not a product set due to non-trivial constraints among them.
We will refrain from providing details and instead rely on the simpler
formulation given in \Cref{ass-delta-bs}. 
\end{remark}

\begin{theorem}[Consistency of $\mathrm{BSOP}$]\label{prop-bs}
Under Assumptions~\ref{assume:model} and \ref{ass-delta-bs}, let $\mathcal{B} =
\{\hat \eta_k\}_{k=1}^{\widehat K}$ be the collection of the estimated change
points from the $\mathrm{BSOP}((0, n), \tau)$ algorithm, where the parameter
$\tau$ satisfies
	\begin{equation}\label{eq:tau BSOP}
	B^2\sqrt{p\log(n)} + 2\sqrt {\epsilon_n } B^2< \tau < C_1\kappa\Delta n^{-1/2},
	\end{equation}    
	for some constant $C_1 \in(0, 1)$ and where
	\[
\epsilon_n = C_2B^2\kappa^{-1}n^{5/2} \Delta^{-2}\sqrt {p \log(n)}
	\]
	for some $C_2>0$.
	Then,
	\begin{equation}\label{eq-thm1-result}
	    \mathbb{P}\Bigl(   \widehat K = K  \quad \text{and} \quad  \max_{k=1,\ldots,K}
	|\eta_k-\hat \eta_k| \le \epsilon_n  \Bigr) \geq 1- 2 \times 9^p n^3   n^ {-cp},
	\end{equation}
	for some absolute constant $c>0$.
\end{theorem}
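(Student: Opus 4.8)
The plan is to condition on a single high-probability ``good'' event on which the empirical CUSUM operators are uniformly close to their population counterparts, and then to run a purely deterministic argument along the recursion tree of BSOP. Concretely, invoking \Cref{lemma:a1} with a union bound over the $O(n^2)$ intervals $(s,e)$ and the $O(n)$ admissible split points $t$, together with a discretization of the unit sphere $\mathcal S^{p-1}$ by a $1/4$-net (whose cardinality is at most $9^p$ and which converts the operator norm into a supremum over finitely many fixed directions), I would define the event
$$\mathcal A = \Bigl\{ \max_{(s,e)} \max_{t} \bigl\| \tS_t - \tSigma_t \bigr\|_{\mathrm{op}} \le \lambda \Bigr\}, \qquad \lambda \asymp B^2\sqrt{p\log(n)},$$
where the maxima range over all triples produced by the algorithm with $t$ at distance at least $p\log(n)$ from the interval endpoints (this spacing restriction is exactly what \Cref{lemma:a1} needs for a sub-Gaussian tail bound). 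The net cardinality $9^p$, the union bound factor $n^3$, and the sub-Gaussian tail $n^{-cp}$ together yield the stated probability $1-2\cdot 9^p n^3 n^{-cp}$. All remaining work takes place on $\mathcal A$.

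On $\mathcal A$, the argument rests on two deterministic facts about the population CUSUM $\tSigma_t$. First, a detection lower bound: if $(s,e)$ contains a true change point $\eta_k$ at distance $\gtrsim\Delta$ from $s$ and $e$, then a direct computation of the population CUSUM — projecting onto the leading eigenvector of a jump and exploiting the telescoping structure of \Cref{def-1} — gives $\max_t\|\tSigma_t\|_{\mathrm{op}}\gtrsim\kappa\Delta/\sqrt{e-s}\ge\kappa\Delta n^{-1/2}$, which exceeds $\tau+\lambda$ by the upper bound in \eqref{eq:tau BSOP}; hence the empirical maximum exceeds $\tau$ and a split occurs. Second, a localization (decay) estimate: regarded as a function of $t$, $\|\tSigma_t\|_{\mathrm{op}}$ attains its local maxima near the true change points and, between them, its square decreases at least linearly as $t$ moves away from the nearest $\eta_k$, with slope controlled by $\kappa^2$ and the geometric weights. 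Combining this with the fact that the empirical argmax $b$ satisfies $\|\tSigma_b\|_{\mathrm{op}} \ge \|\tS_b\|_{\mathrm{op}}-\lambda \ge \|\tS_{\eta_k}\|_{\mathrm{op}}-\lambda \ge \|\tSigma_{\eta_k}\|_{\mathrm{op}}-2\lambda$ forces $|b-\eta_k| \le \epsilon_n$ after solving the resulting inequality for the displacement; tracking the interval length $(e-s)\le n$ and the weights is what produces the precise dependence $\epsilon_n = C_2 B^2\kappa^{-1}n^{5/2}\Delta^{-2}\sqrt{p\log(n)}$.

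With these two facts in hand I would prove, by induction on the number of true change points contained in the interval passed to a recursive call, the statement: $\mathrm{BSOP}((s,e),\tau)$ returns exactly one estimate within $\epsilon_n$ of each interior true change point and no others. The base case is an interval free of change points (up to the $\epsilon_n$-contamination at its endpoints inherited from earlier detections), where $\tSigma_t$ is small, so $\max_t\|\tS_t\|_{\mathrm{op}}\le\lambda+2\sqrt{\epsilon_n}B^2<\tau$ — the lower bound on $\tau$ in \eqref{eq:tau BSOP} is precisely what absorbs both the fluctuation $\lambda$ and the bias $2\sqrt{\epsilon_n}B^2$ created by endpoints that sit within $\epsilon_n$ of a genuine change point — and no spurious split is declared. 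In the inductive step, detection forces a split at some $b$, localization places $b$ within $\epsilon_n$ of some $\eta_k$, and since the assumptions guarantee $\epsilon_n \ll \Delta$, the two subintervals $(s,b-1)$ and $(b,e)$ inherit cleanly separated change points with endpoints contaminated by at most $\epsilon_n$; applying the inductive hypothesis to each completes the step. Since each split removes at least one change point, there are at most $K$ detections, so $\widehat K=K$, finishing the deterministic argument and hence the theorem.

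The main obstacle I anticipate is the localization estimate in the operator-norm/matrix setting. Unlike the scalar mean problem, the leading eigenvector of $\tSigma_t$ can rotate with $t$, so the clean ``piecewise-linear squared-CUSUM'' picture that drives the sharp displacement bound in the univariate case must be recovered by a careful sup-over-directions argument together with a lower bound on the curvature of $t\mapsto\|\tSigma_t\|_{\mathrm{op}}^2$ near each $\eta_k$. A secondary but genuine difficulty is bookkeeping the $\sqrt{\epsilon_n}B^2$ endpoint bias: because the subintervals handed to recursive calls have endpoints that are only $\epsilon_n$-accurate, the population CUSUM on them is not exactly that of a clean piecewise-constant sequence, and one must verify that this bias neither destroys detection nor accumulates across the recursion — which is exactly why it appears additively in the admissible range for $\tau$ in \eqref{eq:tau BSOP}.
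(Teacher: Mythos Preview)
Your overall architecture is right: condition on the uniform deviation event $\mathcal A_1(\{X_i\},\lambda)$ from \Cref{lemma:a1}, then run a deterministic induction over the recursion tree, with a detection branch and a localization branch, and handle the endpoint bias $2\sqrt{\epsilon_n}B^2$ exactly as you describe. The detection lower bound and the stopping/base-case analysis match the paper's Step~1 essentially verbatim.

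The genuine gap is in your localization step. You propose to analyze the curvature of $t\mapsto\|\tSigma_t\|_{\mathrm{op}}^2$ directly and acknowledge that the rotating leading eigenvector is the obstacle. That obstacle is real, and the paper does \emph{not} overcome it by a sup-over-directions or curvature argument. Instead it sidesteps the issue entirely with a \emph{shadow vector} reduction: at the empirical argmax $b$, take $v\in\argmax_{\|u\|=1}|u^\top \tS_b u|$, and observe (\Cref{lemma:reduction to 1d}) that $b$ is simultaneously the argmax of the \emph{univariate} CUSUM $t\mapsto|\widetilde Y^{s,e}_t(v)|$ of the projected series $Y_i(v)=(v^\top X_i)^2$. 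This freezes the direction once and for all, so the population series $f_i(v)=v^\top\Sigma_i v$ is a genuine piecewise-constant scalar signal whose change points are a subset of the $\eta_k$'s, and the Venkatraman-type decay lemma (\Cref{lemma:Venkatraman} / \Cref{coro:1d bs}) applies without any eigenvector rotation issue. That lemma gives the linear decay $\tf_{\eta_k}-\tf_d \gtrsim \tf_{\eta_k}\,|\eta_k-d|\,\Delta\,(e-s)^{-2}$, which, combined with your two-$\lambda$ sandwich, yields $|b-\eta_k|\lesssim \lambda(e-s)^{5/2}\Delta^{-2}\kappa^{-1}=\epsilon_n$ directly.

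Note two subtleties of this reduction that your sketch misses. First, the shadow vector is the leading eigenvector of the \emph{sample} CUSUM at the \emph{sample} argmax, not of any population object; no consistency of $v$ is needed or claimed. Second, the univariate deviation bound $|\widetilde Y_t(v)-\widetilde f_t(v)|\le\lambda$ is inherited from the operator-norm event $\mathcal A_1$ (since $|v^\top(\tS_t-\tSigma_t)v|\le\|\tS_t-\tSigma_t\|_{\mathrm{op}}$), so no new concentration argument is required after projection. With this device in place, the rest of your induction goes through as written.
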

\begin{remark} The condition 
    \eqref{eq:tau BSOP} on the admissible values of the input parameter $\tau$ to the $\mathrm{BSOP}$ algorithm is 
   well defined.  
    Indeed, by \Cref{ass-delta-bs}, for all pairs $(s,e)$ such that $e - s
    > 2 p \log (n)$, we have that 
\begin{align}
B^2 \sqrt {p \log (n) } \le B^2 c_\alpha n^{4\Theta - 7/2 }\le B^2 c_\alpha n^{\Theta} n^{-1/2}\le \frac{c_\alpha}{C_\alpha} \kappa \Delta n^{-1/2}
\le (1/8)  \kappa \Delta (e-s)^{-1/2}
\label{eq:theorem1 first basic}
\end{align}
and
\begin{align}
2\sqrt {\epsilon_n} B^2  &= 2 C_2^{1/2} B^3 \kappa^{-1/2} n^{5/4}\Delta^{-1} (p \log (n))^{1/4} \le (2 C_2^{1/2}  C_\alpha^{-1}c_\alpha^{1/4} )B \kappa^{1/2} n^{5/4 +\Theta -7/4} \nonumber\\
&\le (C_1 /8) \kappa \Delta n^{-1/2}B^{-1}\kappa^{1/2} \le  (C_1/8)  \kappa \Delta (e-s)^{-1/2}\label{eq:theorem1 second basic},
\end{align}
where in the chain of inequalities we have used \Cref{ass-delta-bs} repeatedly. It is also worth noting that
the difference between the right-hand-side and the left-hand-side of
\eqref{eq:tau BSOP} increases as $\Theta$ increases to 1. Finally we remark that in the proof of \Cref{prop-bs}, we actually let $C_1 =
    1/8$, but this is an arbitrary choice and it essentially depends on the
    constants
    $C_{\alpha}$ and $c_{\alpha}$ from \Cref{ass-delta-bs}.

\end{remark}

\begin{proof}[Proof of \Cref{prop-bs}]
  By induction, it suffices to consider any pair of
integers $s$ and $e$ such that $(s, e) \subset (0, T)$ and satisfying
	\begin{align*}
		\eta_{r-1} \le s\le \eta_r \le \ldots\le \eta_{r+q} \le e \le \eta_{r+q+1}, \quad q\ge -1, \\
		\max \{ \min \{ \eta_r-s ,s-\eta_{r-1}  \}, \min \{ \eta_{r+q+1}-e, e-\eta_{r+q}\} \} \le \epsilon_n, 
	\end{align*}
	where $q = -1$ indicates that there is no change point contained in $(s, e)$.
It follows  that, for sufficiently small $ c_\alpha > 0$ and
sufficiently large $C_\alpha > 0$, 
	\begin{align*}
	\frac{\epsilon_n}{ \Delta /4} 
	&
	\le \frac{C_2 B^2\kappa^{-1}n^{5/2}\sqrt {p \log(n)} \Delta^{-2} }{\Delta/4 }\\
	&\le 4 C_2 B^2\kappa^{-1}n^{5/2} \frac{ c_\alpha ^{1/2}n^{4\Theta -7/2} }{C_\alpha^3 \kappa^{-3}B^{6} n^{3\Theta}}
	\\
	&\le  ( 4C_2 c_\alpha ^{1/2}C_\alpha^{-3}  ) (\kappa^2 B^{-4}) n^{\Theta
	-1} \\
	& \le (\kappa^2 B^{-4}) n^{\Theta -1} \\ 
	&\le  1
	\end{align*}
	where the second inequality stems from \Cref{ass-delta-bs}, the third
	inequality holds by choosing  sufficiently small $ c_\alpha$ and
sufficiently large $C_\alpha$ and the last inequality follows from the fact that $ \kappa\le B^2$.
Then, for any change point should be $(s,e)$, it is either the case that 
	\[
	|\eta_p - s|\le \epsilon_n,
	\] 
	or that 
	\[
	|\eta_p - s| \ge \Delta - \epsilon_n \geq \Delta - \Delta/4 = 3\Delta/4.
	\] 
	Similar considerations apply to the other endpoint $e$.
	As a consequence, the fact that $ \min\{ |\eta_p-e|, |\eta_p-s| \}\le
	\epsilon_n$ implies that $\eta_p$ is a detected change point found in
	the previous induction step, while if $ \min\{ \eta_p -s, \eta_p-e\} \ge
	3\Delta/4 $ we can conclude that $\eta_p \in (s, e)$ is an undetected
	change point.

In order to complete the induction step, it suffices to show that BSOP($(s, e),\tau$)  
	(i) will not find any new change point in the interval $(s,e)$ if
	    there is
	    none, or if all the change points in $(s, e)$ have been already
	    detected and 
	(ii) will identify a location $b$ such that $|\eta_p - b| \le \epsilon_n$ if there exists at least one undetected change point in $(s, e)$.

Set $\lambda = B^2\sqrt{p\log(n)}$. Then, the event $\mathcal
A_1(\{X_i\}_{i=1}^n ,\lambda)$ defined in \Cref{eq-event-A1} holds with
probability at least $1 -  2 \times 9^p n^3 n^{−cp}$, for some universal
constant $c>0$.  The proof will be completed in two steps.
\vskip 3mm
\noindent {\bf Step 1.} First we will show that on the event $\mathcal A_1(\{X_i\}_{i=1}^n, \lambda)$, BSOP$((s, e), \tau)$ can consistently detect or reject the existence of undetected change points within $(s, e)$.

Suppose there exists $\eta_p \in (s, e)$ such that $ \min\{ \eta_p -s,
\eta_p-e\} \ge 3\Delta/4 $. Set $\delta = p \log (n)$. Then $\delta \leq
\frac{3}{32} \Delta$, since  
	\[
 p\log(n)\le c_\alpha n^{8\Theta- 7}\le c_\alpha n^{\Theta}\le  c_\alpha C_\alpha^{-1} \Delta B^{-2} \kappa^{1}\le  3\Delta/32,
	\]
	where the last inequality follows from \Cref{ass-delta-bs}. With this
	choice of $\delta$, we apply \Cref{lemma:lower bound of CUMSUM} in
	\Cref{sec-pre-1} (where
	we set $c_1 = 3/4$) and
	obtain that
		\[
		\max_{t = \lceil s+\delta \rceil, \ldots, \lfloor e-\delta \rfloor} \| \widetilde \Sigma_{t}^{s,e}\|_{\mathrm{op}}  \ge (3/8)\kappa\Delta (e-s)^{-1/2}.
		\]
		
On the event $\mathcal A_1(\{X_i\}_{i=1}^n ,\lambda)$, 
\begin{align}\label{eq:bs proof size of population}
	\max_{t = \lceil s+\delta \rceil, \ldots, \lfloor e-\delta \rfloor} \| \widetilde S_{t}^{s,e}\|_{\mathrm{op}}& \ge 	\max_{t = \lceil s+\delta \rceil, \ldots, \lfloor e-\delta \rfloor} \| \widetilde \Sigma_{t}^{s,e}\|_{\mathrm{op}} -\lambda \ge  (3/8)\kappa\Delta (e-s)^{-1/2} -\lambda
	 \ge (1/8)\kappa\Delta (e-s)^{-1/2},
	\end{align}
	where the last inequality follows from \eqref{eq:theorem1 first basic}
	(in the last step we have set $C_1 = 1/8$).
	If \eqref{eq:tau BSOP} holds, then, on the event $\mathcal
	A_1(\{X_i\}_{i=1}^n ,\lambda)$, BSOP($(s, e), \tau$) detects the existence of undetected change points if there are any.

Next, suppose there does not exist any undetected change point within $(s, e)$.
Then, one of the following cases must occur. 
\begin{itemize}
		\item [(a)]	There is no change point within $(s, e)$;
		\item [(b)] there exists only one change point $\eta_{r}$ within $(s, e)$ and $\min \{ \eta_{r}-s , e-\eta_{r}\}\le \epsilon_n $;  
		\item [(c)] there exist two change points $\eta_{r} ,\eta_{r+1}$ within $(s, e)$ and that  $\max \{ \eta_{r}-s , e-\eta_{r+1}\}\le \epsilon_n $.
	\end{itemize}

Observe that if case (a) holds, then on the event $\mathcal A_1(\{X_i\}_{i=1}^n
,\lambda)$, we have that
	\[
	\max_{t = \lceil s+\delta \rceil, \ldots, \lfloor e-\delta \rfloor} \| \widetilde S_{t}^{s,e}\|_{\mathrm{op}} \le \max_{t = \lceil s+\delta \rceil, \ldots, \lfloor e-\delta \rfloor} \| \widetilde \Sigma_{t}^{s,e}\|_{\mathrm{op}} +\lambda = \lambda < \tau,
	\]
	where the last inequality follows from \eqref{eq:tau BSOP}.
If situation (c) holds, then on the event  $\mathcal A_1(\{X_i\}_{i=1}^n ,\lambda)$, we have
	\[
	\max_{t = \lceil s+\delta \rceil, \ldots, \lfloor e-\delta \rfloor} \| \widetilde S_{t}^{s,e}\|_{\mathrm{op}} \le \max_{t = \lceil s+\delta \rceil, \ldots, \lfloor e-\delta \rfloor} \| \widetilde \Sigma_{t}^{s,e}\|_{\mathrm{op}} +\lambda\le \max \{\| \widetilde \Sigma_{\eta_r}^{s,e}\|_{\mathrm{op}} ,\| \widetilde \Sigma_{\eta_{r+1}}^{s,e}\|_{\mathrm{op}} \} +\lambda \le 2\sqrt {\epsilon_n} B ^2+\lambda,
	\]
	where the first inequality follows from $\mathcal A_1(\{X_i\}_{i=1}^n
	,\lambda)$, the second inequality from \Cref{lemma:maximized at change
	points} and the third inequality from  \Cref{lemma:size of boundary}.
	(Both Lemmas are in \Cref{sec:properties.cusum}.)
Case (b) can be handled in a similar manner. Thus, if \eqref{eq:tau BSOP} holds,
 then on the event
$\mathcal A_1(\{X_i\}_{i=1}^n ,\lambda)$, BSOP($(s, e),\tau$) has no false
positives when there are no undetected change points in $(s, e)$.
\vskip 3mm

\noindent {\bf Step 2.} Assume now that there exists a change point $\eta_p\in
(s, e)$ such that $ \min\{ \eta_p -s, \eta_p-e\} \ge 3\Delta/4 $ and 
	let
	\[
	b \in \argmax_{t = \lceil s+\delta \rceil, \ldots, \lfloor e-\delta \rfloor}  \bigl\| \tS_t \bigr\|_{\mathrm{op}}.
	\]
	To complete the proof it suffices to show that $|b-\eta_k|\le \epsilon_n$.

Let $v$ be such that
	\[
	v\in \argmax_{\|u\|=1} |u^{\top} \tS_{b} u|.
	\]
	Consider the univariate time series  $\{Y_i (v)\}_{i=1}^n$ and $\{f_i
	(v)\}_{i=1}^n$ defined in \eqref{eq:project sample} and
	\eqref{eq:project population} in \Cref{sec:properties.cusum}.  By
	\Cref{lemma:reduction to 1d}, $b \in \arg\max_{s\le t\le e}
	|\widetilde Y_t(v) |$. Next, we wish to apply \Cref{coro:1d bs} to the time
	series $\{Y_{i} (v)\}_{i=s}^e$ and $\{f_{i} (v)\}_{i=s}^e$. Towards that
	end, we first need to ensure that the conditions required for that
	result to hold are verified.  
(Notice that in  the statement of \Cref{coro:1d bs}, the  $f_i$'s are assumed to
be uniformly bounded by $B_1$, while in this proof the 
$f_{i} (v) $'s defined in  \eqref{eq:project population} are assumed to be bounded by $2B^2$.)
First, the collection of the change points of the time series $\{f_{i}
(v)\}_{i=s+1}^e$ is a subset of $\{\eta_{k}\}_{k=0}^{K+1}\cap (s,e)$.
The condition \eqref{prop:1d bs 4} and the inequality $2\sqrt {\delta} B^2\le  (3c_1/4)\kappa \Delta (e-s)^{-1/2}$ are  straightforward consequences of
\Cref{ass-delta-bs}, while \eqref{coro:1d bs 1} follows from the fact that
	\[
	|\widetilde f_t^{s,e} (v)  -\widetilde Y_t ^{s,e} (v) | \le  \left\|
	\tS_t  -\tSigma_t \right\|_{\op} \le \lambda.
	\]
	Similarly,  \eqref{coro:1d bs 2} stems  from the relations 
	\[
	\max_{t = \lceil s+\delta \rceil, \ldots, \lfloor e-\delta \rfloor} |\widetilde Y_t^{s,e} (v)  |= \max_{t = \lceil s+\delta \rceil, \ldots, \lfloor e-\delta \rfloor}   \|\tS_t \|_{\op}  \ge\max_{t = \lceil s+\delta \rceil, \ldots, \lfloor e-\delta \rfloor} \|\tSigma_t \|_{op}-\lambda \ge (1/8)\kappa \Delta (e-s)^{-1/2}  
	 \]
	where the first inequality holds on the event $\mathcal
	A_1(\{X_i\}_{i=1}^n ,\lambda)$ and the second inequality is due to
	\eqref{eq:bs proof size of population} and \Cref{ass-delta-bs}. Thus,
	all the assumptions of \Cref{coro:1d bs} are met. An application of that
	result yields that there exists $\eta_{k}$, a change point of
	$\{f_i(v)\}_{i=s}^e $ satisfying \eqref{eq:1d re1}, such that 
	\[
	| b-\eta_{k}|\le C_2\lambda(e-s)^{5/2} \Delta^{-2} \kappa^{-1}\le \epsilon_n.
	\]
	
The proof is complete by observing that \eqref{eq:1d re1}  implies $ \min\{ \eta_k -s, \eta_k-e\} \ge 3\Delta/4 $, as discussed in the argument before {\bf Step 1}.

\end{proof}

The proof of the theorem relies on a non-trivial extension of the arguments for proving
consistency of the BS algorithm in one-dimensional mean change point detection
problems, as done in \cite{venkatraman1992consistency}. The main difficulty that prevents a direct
application of those results
is the fact the regions of monotonicity of the function $t \mapsto
\bigl\|\tSigma_t\bigr\|_{\mathrm{op}}$ are hard to derive. Instead, for each pair
of integers $1 \leq s < e \leq n$ with $e - s > 2 p \log (n)$, we study the
one-dimensional time series $\{ (v^\top X_i)^2 \}_{i=1,\ldots,n}$ of the squared
coefficients of the projection of the data along a one-dimensional linear
subspace spanned by a distinguished unit vector $v$, which we term the {\it shadow vector}.
This is simply the leading singular vector of
$\tSigma_b$, where $b = \argmax_{t \in
(s + p \log (n) , e - p \log (n))}\bigl\|\tSigma_t\bigr\|_{\mathrm{op}} $.
As it turns out, with such a choice of the shadow vector, the local maxima of
CUSUM statistic applied to the corresponding one-dimensional time series
coincide with the local maxima of the time series of the values of the operator norm of the
CUSUM covariance statistics. As a result, for the purpose of detecting local maxima of
the CUSUM covariance statistic, it is enough and in fact much simpler to study the univariate times series of the squared
projections onto the appropriate shadow vector. Of course, at each iteration of the BSOP algorithm a new shadow
vector and a new univariate time series are obtained and a new local maximum is
found.  Note that the shadow vector we use here does not need to possess consistency, therefore our algorithm is computationally efficient and attain both tight localization bound and tight phase transition bound.  See \Cref{sec:properties.cusum} for further comments
on the uses and interpretation of the shadow vector.

\Cref{prop-bs} implies, that with high probability, the BSOP algorithm will
identify all the change points and estimate their locations with an error that
is bounded by
\[
  \epsilon_n \preceq  \frac{B^2}{\kappa}  \Delta^{-2} n^{5/2} \sqrt{p \log (n)}.
\]
Notice that, as expected, the performance of BSOP is deteriorating in the inverse
of the
signal-to-noise ratio parameter $\frac{\kappa}{B^2}$, the inverse of the minima
distance $\Delta$ between change points and the dimension $p$.  
The above bound yields a family of rates of consistency for BSOP, depending on
the scaling of each of the quantities involved in it. 
For example, in the simplest and most favorable scenario whereby $B$, $\kappa$ and the dimension $p$ are
constants, the bound 
implies a rate for change point localization of the order 
\[
    \frac{\epsilon_n}{n} \preceq  n^{- 2
\Theta + 3/2} \sqrt{\log (n)},
\] 
which is decreasing in the $\Theta \in (7/8,1]$. In particular, when the number of change points is also kept constant, we have that $\Theta =
1$, yielding a localization rate of order $\sqrt{\frac{ \log (n)}{n}}$. 

As we will see in the next section, the dependence on the parameters $B$, $\kappa$ and $\Delta$ is sub-optimal. 
The advantage of BSOP  over the rate-optimal algorithm we introduce next is that
BSOP only requires one input parameter, the threshold value $\tau$. Furthermore, 
when the spacing parameter $\Delta$ is comparable with $n$ and the dimension $p$
of the data grows slowly with respect with $n$, then BSOP can still deliver good
consistency rates.  Therefore, despite its suboptimality in general, BSOP is a
simple and convenient algorithm which may serve as a competitive benchmark for other
procedures.

\subsection{Consistency of the WBSIP algorithm} 

In this section we describe and analyze the performance of a new algorithm for
covariance change-point detection, which we term WBSIP for Wild Binary Segmentation
through Independent Projections. The WBSIP algorithm is a generalization of the
WBS procedure of \cite{fryzlewicz2014wild} for mean 
change point detection and further exploits the properties of shadow vectors. The WBSIP procedure
begins by splitting the data into halves and by selecting at random a collection
of $M$ pairs of 
integers $(s,e)$ such that $1 \leq s < e \leq n$ and $e - s > p \log (n) + 1$. In
its second step, WBSIP computes, for each of the $M$ random integer intervals
previously generated, a shadow vector using one half of the data and its
corresponding one-dimensional time series using the other half. The final step
of the procedure is to apply
the WBS algorithm over the resulting univariate time series. The details of the
algorithm are given in \Cref{algorithm:PC}, which describes the computation of the
shadow vectors by principal component methods, and \Cref{algorithm:WBSRP}, which
implements WBS to the resulting one
dimensional time series. 

\begin{remark}
The idea of combining the WBS algorithm with sample splitting is previously used in \cite{wang2016high}, who applied it to the problem of mean change-point detection in multivariate settings.  Due to the fact that they are recovering a sparse leading eigenvector in a possibly ultrahigh-dimensional setting, their method is inevitably computationally more expensive, and hard to achieve a tight bounds in terms of the sparsity level.  This leads to one of the main differences between our approach and theirs -- we do no require the shadow vectors to be consistent estimators of subspaces related  the true covariance matrices. In particular, our analysis holds without any eigengap assumptions.
\end{remark}

In order to analyze the performance of the WBSIP procedure, we will impose the
following assumption, which is significantly weaker than \Cref{ass-delta-bs}.

\begin{assumption}\label{assume:phase}
There exists a sufficiently large absolute constant $C >0$ such that $\Delta \kappa^2\ge C p\log(n) B^{4} $.
\end{assumption}

\begin{remark}
We recall that all the  parameters $\Delta, \kappa,p, B$ are allowed to depend
on $n$.  Since $\kappa\le B^2$, and assuming without loss of generality that
the constant $C$ in the previous assumption is larger than $8$, we further have that
	\[
	p\log(n) \le \Delta \kappa ^2 B^{-4} C^{-1} \le \Delta /8,
	\]
	which is used repeatedly below. In fact, in the proof we will set
	 $C = 32\sqrt{2}$. This choice 
	is of course arbitrary and is only made for convenience in carrying out
	the calculations below. 
We also recall that the ratio $B^4\kappa^{-2}$ is invariant of multiplicative
scaling, i.e. if $ X'_i = \alpha X_i$ for all $i$, then the corresponding ratio $B^4\kappa^{-2}$ stays the same.
\end{remark}

\begin{algorithm}[!ht]
\begin{algorithmic}
	\INPUT $\{X_i\}_{i=1}^n$, $\{ (\alpha_m ,\beta_m) \}_{m=1}^M$
	\For{$m = 1, \ldots, M$} 
	\If {$\beta_m-\alpha_m> 2p\log(n) +1 $}
		\State $d_m \leftarrow \argmax_{\lceil \alpha_m +  p\log(n) \rceil \leq t \leq \lfloor \beta_m -  p\log(n) \rfloor}
		 \|\widetilde{S}^{\alpha_m, \beta_m}_t \|_{\mathrm{op}}$
		\State $u_m \leftarrow \argmax_{\|v\| =1} \bigl| v^{\top}\widetilde{S}^{\alpha_m, \beta_m}_{d_m}v|$
		\Else 	\State  $u_m\leftarrow0$
		\EndIf
	
	\EndFor 
	\OUTPUT $\{ u_m\}_{m=1}^M$.
\caption{Principal Component Estimation $\mathrm{PC}(\{X_i\}_{i=1}^n, \{(\alpha_m ,\beta_m)\}_{m=1}^M)$}
\label{algorithm:PC}
\end{algorithmic}
\end{algorithm}

\begin{algorithm}[!ht]
\begin{algorithmic}
	\INPUT Two independent samples $\{W_i\}_{i=1}^{n}$, $\{X_i\}_{i=1}^{n}$, $\tau$, $\delta$.
	\State  $\{ u_m\}_{m=1}^M= PC(\{W_i\}_{i=1}^n, \{(\alpha_m ,\beta_m)\}_{m=1}^M  )$
	\For{$i \in  \{s, \ldots, e\}$}
		\For{$m = 1, \ldots, M$}
			\State $Y_i(u_m) \leftarrow \bigl(u_m^{\top} X_i\bigr)^2$
		\EndFor
	\EndFor 
	\For{$m = 1, \ldots, M$}  
		\State $(s_m', e_m') \leftarrow [s,e]\cap [\alpha_m,\beta_m]$
				and  $(s_m,e_m) \leftarrow (\lceil  s_m'+\delta \rceil , \lfloor e_m'-\delta \rfloor  ) $
		\If{$e_m - s_m \geq 2\log(n) + 1$}
			\State $b_{m} \leftarrow \argmax_{s_m +\log(n) \leq t \leq e_m -\log(n)}  | \widetilde Y^{s_m,e_m}_{t} (u_{m})|$
			\State $a_m \leftarrow \bigl| \widetilde Y^{s_m,e_m}_{b_{m}}  ( u_{m})\bigr|$
		\Else 
			\State $a_m \leftarrow -1$	
		\EndIf
	\EndFor
	\State $m^* \leftarrow \argmax_{m = 1, \ldots, M} a_{m}$
	\If{$a_{m^*} > \tau$}
		\State add $b_{m^*}$ to the set of estimated change points
		\State WBSIP$((s, b_{m*}),\{ (\alpha_m,\beta_m)\}_{m=1}^M, \tau, \delta)$
		\State WBSIP $((b_{m*}+1,e),\{ (\alpha_m,\beta_m)\}_{m=1}^M,\tau, \delta) $

	\EndIf  
	\OUTPUT The set of estimated change points.
\caption{Wild Binary Segmentation through Independent Projection. WBSIP$((s, e),$ $\{ (\alpha_m,\beta_m)\}_{m=1}^M, \tau, \delta$)}
\label{algorithm:WBSRP}
\end{algorithmic}
\end{algorithm}

    Similarly to the BSOP algorithm,  WBSIP also applies a slight modification to
    the WBS algorithm as originally proposed in \cite{fryzlewicz2014wild}. When
    computing the shadow vectors in \Cref{algorithm:PC},  the search for the
    optimal direction onto which to project the data is
    restricted, for any given candidate interval, only
    to the time points that are at least $p \log (n)$ away from the endpoints of
    the interval. As remarked in the previous section, this ensures good tail
    bounds for the operator norms of the matrices involved. 

    A second, more substantial, adaptation of WBS is used in
    \Cref{algorithm:WBSRP}: when searching for candidate change points
    inside a given interval, the algorithm only considers time points that are
    $\delta$-away from the endpoints of the interval, where $\delta$ is an upper
    bound on the localization error -- the term $\epsilon_n$ in \Cref{thm:wbsrp} below. 
The reasons for such restriction are somewhat subtle: once an estimated change point is
found near a true change point, in its next iteration the  algorithm can 
no longer look for change points in that proximity, since this will
result,
with high probability, in spurious detections. This phenomenon is due to the
fact that the behavior of the CUSUM statistic of the projected data is not
uniform around its local maxima. Therefore,  after a true
change point has been detected,  the algorithm must scan
only nearby regions of low signal-to-noise ratio -- so that the probability of false positives can be
adequately controlled with a proper choice of the thresholding parameter
$\tau$. Thus the need to stay away by at leat the localization error from the
detected change points.
A very similar condition is  imposed
in the main algorithm of
\cite{wang2016high} and implicitly in \cite{korkas2017multiple}.
The value of $\delta$ is left as an input parameter \citep[as in][]{wang2016high}, but any value between
    the localization error $\epsilon_n$ given in the statement of \Cref{thm:wbsrp}
    and the minimal distance $\Delta$ between change points will do.
    In  the proof of we \Cref{thm:wbsrp} we set $\delta \leq 3 \Delta/32 $.

\begin{theorem}[Consistency of $\mathrm{WBSIP}$]\label{thm:wbsrp}
Let Assumptions~\ref{assume:model} and \ref{assume:phase} hold and 
let $\{(\alpha_m,\  \beta_m) \}_{m=1}^M\subset (0, T)^{ M}$ be a
	collection of intervals whose end points are drawn independently and
	uniformly from $\{1,\ldots, T\}$ and such that $\max_{1\le m \le M}
	(\beta_m -\alpha_m)\le C \Delta$  for an absolute
    constant $C>0$.  
Set
\[
    \epsilon_n =C_1 B^4\log(n) \kappa^{-2},
\]
for a $C_1>0$.
Suppose there exist $c_2, c_3 > 0$, sufficiently small,  such that the input
parameters  $\tau$ and $\delta$ satisfy
	\begin{align}\label{eq:tau WBSRP}
		& B^2 \sqrt {\log(n)}< \tau <c_2\kappa \sqrt {\Delta}, \\
		& \epsilon_n  < \delta \le c_3\Delta. \nonumber
	\end{align}    
	Then the collection of the estimated change points
$\mathcal B=\{\hat \eta_k\}_{k=1}^{\widehat K}$	returned by WBSIP with input
parameters of  $(0, n)$, $\{(\alpha_m,\beta_m)\}_{m=1}^M$, $\tau$ and $\delta$ satisfies	
		\begin{align}
			& \mathbb{P}\Bigl\{  \widehat K =K ; \quad \max_{k=1,\ldots,K} |\eta_k-\hat \eta_k| \le  \epsilon_n \Bigr\} \nonumber \\
			\geq & 1-2n^2 M  n^ {-c}  -n^3 9^p  2n^ {-cp}  -\exp\bigl(\log(n/\Delta)- M\Delta^2/(16 n^2)\bigr) \label{eq-thm2-prob}
	 	\end{align}
	for some absolute constants $c > 0$.
\end{theorem}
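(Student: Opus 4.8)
The plan is to reproduce the inductive architecture of the proof of \Cref{prop-bs}, replacing the deterministic search over $(0,n)$ by the randomized WBS search and exploiting the independence granted by sample splitting. First I would condition on the sample $\{W_i\}$ feeding \Cref{algorithm:PC}, so that the shadow vectors $\{u_m\}_{m=1}^M$ become fixed unit vectors and each projected series $\{(u_m^{\top}X_i)^2\}$ is a genuine univariate time series driven by the independent sample $\{X_i\}$. On this conditional probability space I would introduce three good events whose complements account for the three error terms in \eqref{eq-thm2-prob}: (a) the operator-norm concentration event $\mathcal A_1(\{W_i\}_{i=1}^n,\lambda)$ of \eqref{eq-event-A1} with $\lambda=B^2\sqrt{\log(n)}$, which controls the $W$-matrix CUSUMs used to build the shadow vectors and contributes $n^3 9^p 2 n^{-cp}$; (b) a univariate concentration event ensuring $\max|\tY_t(u_m)-\tf_t(u_m)|\le\lambda$ simultaneously over all $M$ intervals and all admissible triples $(s,e,t)$, which by \Cref{lemma:a1} and a union bound over the $O(n^2 M)$ indices contributes $2n^2 M n^{-c}$; and (c) the favorable-intervals event that every change point $\eta_k$ is straddled by at least one random interval $(\alpha_m,\beta_m)$ with both endpoints $\Delta$-deep on either side, whose failure probability is $\exp\bigl(\log(n/\Delta)-M\Delta^2/(16n^2)\bigr)$ since there are at most $n/\Delta$ change points and each favorable pattern has probability of order $(\Delta/n)^2$.

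With these events in force I would run the same induction as in \Cref{prop-bs}: for an interval $(s,e)$ whose surviving undetected change points are $3\Delta/4$-deep, I must show WBSIP (i) fires when an undetected change point is present, (ii) does not fire otherwise, and (iii) localizes to within $\epsilon_n=C_1 B^4\kappa^{-2}\log(n)$. The $\delta$-trimming in \Cref{algorithm:WBSRP}, with $\epsilon_n<\delta\le c_3\Delta$, is exactly what lets the induction close. For the no-false-positive step, since every already-detected change point lies within $\epsilon_n<\delta$ of a recursion endpoint, trimming removes it from $(s_m,e_m)$; hence when no undetected change point remains the trimmed interval is change-point-free, the population CUSUM $\tf_t(u_m)$ vanishes there, and $a_m\le\lambda<\tau$. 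For the detection step I would take the favorable interval for an undetected $\eta_k$, which after trimming still contains $\eta_k$ as its unique interior change point; on $\mathcal A_1$ the lower bound \Cref{lemma:lower bound of CUMSUM} gives $\|\widetilde S^{\alpha_m,\beta_m}_{d_m}\|_{\op}\ge\|\widetilde\Sigma^{\alpha_m,\beta_m}_{\eta_k}\|_{\op}-\lambda\gtrsim\kappa\sqrt{\Delta}$.

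The crux, and the step I expect to be hardest, is to convert this operator-norm signal into a univariate signal along the shadow vector \emph{without any eigengap assumption}. Because the favorable interval contains the single change point $\eta_k$, the population CUSUM $\widetilde\Sigma^{\alpha_m,\beta_m}_t$ is proportional to the jump $\Sigma_{\eta_k+1}-\Sigma_{\eta_k}$ for $t$ near $\eta_k$, so the maximizing direction $u_m$ of $v^{\top}\widetilde S^{\alpha_m,\beta_m}_{d_m}v$—which on $\mathcal A_1$ is within $\lambda\ll\kappa\sqrt{\Delta}$ of the population quadratic form—must satisfy $|u_m^{\top}(\Sigma_{\eta_k}-\Sigma_{\eta_{k-1}})u_m|\gtrsim\kappa$. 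The key point is that this argument controls only the scalar quadratic form, never $u_m$ itself, so no spectral-gap or consistency requirement on the shadow vector is invoked. Consequently $\tf_t(u_m)$ carries a jump of order $\kappa$ at $\eta_k$, so $|\tf_{\eta_k}(u_m)|\gtrsim\kappa\sqrt{\Delta}$ and $a_{m}\ge|\tf_{\eta_k}(u_m)|-\lambda>\tau$ whenever $\tau<c_2\kappa\sqrt{\Delta}$, establishing detection.

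Finally, for localization I would apply the refined one-dimensional WBS result \Cref{coro:1d bs} to the series $\{(u_{m^*}^{\top}X_i)^2\}$ of the maximizing interval. Since $a_{m^*}>\tau>\lambda$, the univariate concentration forces a genuine jump (of size $\gtrsim\kappa$, by the shadow-vector argument above) inside the trimmed interval, and the sharp 1D analysis—exploiting the quadratic decay of $|\tf_t(u_{m^*})|$ away from its maximizer together with the two-sided bound $|\tY_{b_{m^*}}|\ge|\tY_{\eta_k}|$—yields $|b_{m^*}-\eta_k|\lesssim\lambda^2\kappa^{-2}=B^4\kappa^{-2}\log(n)=\epsilon_n$, which is where the optimal $\kappa$-dependence enters. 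This is precisely the improvement over the $(e-s)^{5/2}$ rate of \Cref{prop-bs}, and verifying its hypotheses (the boundedness $|\tf_t(u_m)|\le 2B^2$, the signal lower bound, and the scaling constraints on $\tau,\delta$) reduces, as in the proof of \Cref{prop-bs}, to elementary consequences of \Cref{assume:phase}. Intersecting the three good events and taking $\lambda=B^2\sqrt{\log(n)}$ then delivers \eqref{eq-thm2-prob}.
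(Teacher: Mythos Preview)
Your proposal follows the same architecture as the paper's proof: condition on $\{W_i\}$ so the shadow vectors $\{u_m\}$ become deterministic, work on the intersection of (a) operator-norm concentration for the $W$-sample, (b) univariate concentration for the projected $X$-series, and (c) the favorable-interval event $\mathcal M$; then run the induction, with the $\delta$-trimming killing false positives, the favorable interval forcing $|u_m^\top(\Sigma_{\eta_k}-\Sigma_{\eta_{k-1}})u_m|\gtrsim\kappa$ via exactly the quadratic-form argument you sketch (the paper's Step~1), and the sharp one-dimensional lemma yielding $|b_{m^*}-\eta_k|\lesssim\lambda^2\kappa^{-2}$.

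Two slips to fix. First, the threshold for event~(a) must be $\lambda_1=B^2\sqrt{p\log(n)}$, not $B^2\sqrt{\log(n)}$: the operator-norm bound in \Cref{lemma:a1} passes through a $9^p$-net of $\mathcal S^{p-1}$ and only delivers the stated $9^p n^3 n^{-cp}$ probability with the factor $\sqrt{p}$ present. The paper therefore runs two scales, $\lambda_1=B^2\sqrt{p\log(n)}$ on $\{W_i\}$ and $\lambda_2=B^2\sqrt{\log(n)}$ on the projected $\{X_i\}$; \Cref{assume:phase} is precisely what guarantees $\lambda_1\lesssim\kappa\sqrt\Delta$ so that the shadow-vector step still goes through. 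Second, the localization result you need is \Cref{coro:wbs 1d} (the $\lambda^2\kappa^{-2}$ rate, proved via the ANOVA/projection argument of \Cref{subsection:fryzlewicz}), not \Cref{coro:1d bs}, which only gives the BS-type $(e-s)^{5/2}$ rate; that corollary also requires the partial-sum event $\mathcal B_2$ of \eqref{eq:B2} in addition to the CUSUM event $\mathcal B_1$ you listed in~(b).
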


\begin{remark}[{\bf The relationships among the constants in \Cref{thm:wbsrp}}]
The choice of the constant $C$ is essentially arbitrary but will affect the
choice of the constants $C_1$, $c_2$ and $c_3$, where $c_2$ and $c_3$ in
particular have to be picked small enough. This dependence can be tracked in the
proof but we refrain from giving further details.
\end{remark}

The above theorem implies that the WBSIP algorithm can estimate the
change points perfectly well, with high probability, with a localization rate upper bounded by
\[
    \frac{\epsilon_n}{n} \preceq \frac{B^4}{\kappa^2} \frac{\log (n)}{n}.
\]
The consistency also relies on choosing a large enough number of random intervals $M$.  It follows from \eqref{eq-thm2-prob} that $M \gtrsim n^2\log(n) \Delta^{-2}$ is required.  
The fact that the dimension $p$ does not appear explicitly in the localization
rate is an interesting, if not perhaps surprising, finding. 
Of course, the dimension does affect (negatively) the
performance of the algorithm through \Cref{assume:phase}: keeping $n$ and
$\Delta$ fixed, a larger value of $p$ implies a larger value of
$\frac{B^4}{\kappa^2}$ in order for that assumption to hold. In turn, this leads
to a larger bound in \Cref{thm:wbsrp}. Furthermore, the dimension $p$ appears in
the probability of the event that WBSIP fails to locate all the change points.
We remark that, for the different problem of high-dimensional mean change point
detection,  \cite{wang2016high} also obtained a localization rate independent of
the dimension: see Theorem 3 there. 
In \Cref{section:lb} below we will
prove that \Cref{assume:phase} is in fact essentially necessary for any
algorithm to produce a vanishing localization rate.


\begin{proof}
Since $\epsilon_n$ is the desired order of localization rate, by induction, it suffices to consider any generic $(s, e) \subset (0, T)$ that satisfies 
	\begin{align*}
		\eta_{r-1} \le s\le \eta_r \le \ldots\le \eta_{r+q} \le e \le \eta_{r+q+1}, \quad q\ge -1, \\
		\max \{ \min \{ \eta_r-s ,s-\eta_{r-1}  \}, \min \{ \eta_{r+q+1}-e, e-\eta_{r+q}\} \} \le \epsilon_n,
	\end{align*}
	where $q = -1$ indicates that there is no change point contained in $(s, e)$.

Note that under \Cref{assume:phase}, $\epsilon_n \le \Delta /4$; it, therefore, has to be the case that for any change point $\eta_p\in (0, T)$, either $|\eta_p -s |\le \epsilon_n$ or $|\eta_p-s| \ge \Delta - \epsilon_n \geq 3\Delta /4 $. This means that $ \min\{ |\eta_p-e|, |\eta_p-s| \}\le \epsilon_n$ indicates that $\eta_p$ is a detected change point in the previous induction step, even if $\eta_p\in (s, e)$.  We refer to $\eta_p\in[s,e]$ as an undetected change point if $ \min\{ \eta_p -s, \eta_p-e\} \ge 3\Delta/4 $.

In order to complete the induction step, it suffices to show that WBSIP$((s, e),
\{(\alpha_m,\beta_m)\}_{m=1}^M, \tau, \delta)$ (i) will not detect any new change
point in $(s,e)$ if
all the change points in that interval have been previous detected, and
	(ii) will find a point $b$ in $(s,e)$ (in fact, in $(s + \delta, e - \delta)$) such that $|\eta_p-b|\le \epsilon_n$ if there exists at least one undetected change point in $(s, e)$.

Let 
	\[
	\{ u_m\}_{m=1}^M= PC(\{W_i\}_{i=1}^n, \{(\alpha_m ,\beta_m)\}_{m=1}^M  ).
	\]
	Since the intervals $\{(\alpha_m,\beta_m)\}_{m=1}^M$ are generated
	independently from $\{X_i\}_{i=1}^{n}\cup \{W_i\}_{i=1}^{n}$, the rest
	of the argument is made on the event $\mathcal M$, which is defined in \Cref{event-M} of
	\Cref{section:probability}, and which has no effects on  the distribution of  $\{X_i\}_{i=1}^{n}\cup \{W_i\}_{i=1}^{n}$.
\vskip 3mm

\noindent{\bf Step 1.} 
Let $\lambda_1 = B^2\sqrt{p\log(n)}$.  In this step, we are to show that, on the
event $\mathcal A_1(\{W_i\}_{i=1}^n ,\lambda_1)$ and for some $c'_1>0$, 
	\begin{align} \label{eq:properties of vectors}
		\sup_{1\le m\le M}|u_m^{\top} (\Sigma_{\eta_k} -\Sigma_{\eta_{k-1}}) u_m| \ge c_1'\| \Sigma_{\eta_k} -\Sigma_{\eta_{k-1}}\|_{\op} = c_1'\kappa_k \quad \text{ for every} \quad  k=1,\ldots, K+1
	\end{align}
	On the event $\mathcal M$, for any $\eta_k\in (0, n)$, without loss of
	generality, there exists $\alpha_k\in
	[\eta_k-3\Delta/4,\eta_k-\Delta/2]$ and $\beta_k \in
	[\eta_k+\Delta/2,\eta_k+3\Delta/4]$.  Thus $[\alpha_k, \beta_k]$
	contains only one change point $\eta_k$.  Using \Cref{lemma:lower bound of
	CUMSUM} in \Cref{sec-pre-1} and the inequality $ p\log(n)\le
	\Delta/8$, we have that
		\begin{equation}\label{eq:wbsrp size of population 1}
			\max_{ t = \lceil \alpha_k+\delta  \rceil,\ldots, \lfloor  \beta_k-\delta \rfloor}\| \widetilde \Sigma_t^{\alpha_k,\beta_k}\|_{\op} = \| \widetilde \Sigma^{\alpha_k,\beta_k}_{\eta_k}\|_{\op}  \ge (1/2) \|\Sigma_{\eta_k} -\Sigma_{\eta_{k-1}}\|_{\op} \sqrt{\Delta}.
		\end{equation}
	Let $b_k \in \argmax_{ t = \lceil \alpha_k+\delta  \rceil,\ldots, \lfloor  \beta_k-\delta \rfloor}\| \widetilde S_t^{\alpha_k,\beta_k}\|_{\op},$ where $\widetilde S_t^{s,e}$ denote the covariance CUSUM statistics of $\{ W_i\}_{i=s+1}^e $ at evaluated $t$.  Since $\|\Sigma_{\eta_k} -\Sigma_{\eta_{k-1}}\|_{\op}  = \kappa_k$, by definition,
		\begin{align*}
		 | u_k^{\top} \widetilde \Sigma^{\alpha_k,\beta_k}_{b_k}u_k| & \ge
		 | u_k^{\top} \widetilde S^{\alpha_k,\beta_k}_{b_k}u_k|
		 -\lambda_1\\
		&  = \max_{ t = \lceil\alpha_k+\delta \rceil  ,\ldots, \lfloor  \beta_k-\delta \rfloor }\| \widetilde S_t^{\alpha_k,\beta_k}\|_{\op}-\lambda_1 \\
		& \ge \max_{ t = \lceil\alpha_k+\delta \rceil  ,\ldots, \lfloor
		\beta_k-\delta \rfloor  }\| \widetilde
		\Sigma_t^{\alpha_k,\beta_k}\|_{\op}-2\lambda_1\\
		& \ge (1/2) \|\Sigma_{\eta_k} -\Sigma_{\eta_{k-1}}\|_{\op}
		\sqrt{\Delta} -2\lambda_1\\
		& \ge(1/4)\|\Sigma_{\eta_k} -\Sigma_{\eta_{k-1}}\|_{\op}  \sqrt{\Delta}
		\end{align*}
		where the first and second inequalities hold on the event
		$\mathcal A_1(\{W_i\}_{i=1}^n ,\lambda_1)$, the third inequality
		follows from \eqref{eq:wbsrp size of population 1} and the last
		inequality from \Cref{assume:phase}.
		Next,  observe that 
		\[
		\widetilde \Sigma_t^{\alpha_k, \beta_k} =
			\begin{cases}
				\sqrt {\frac {t-\alpha_k}{(\beta_k-\alpha_k)(\beta_k-t)} }(\beta_k-\eta_k) (\Sigma_{\eta_k} -\Sigma_{\eta_{k-1}}), & t\le \eta_k, \\
				\sqrt {\frac {\beta_k-t}{(\beta_k-\alpha_k)(t-\alpha_k)} }(\eta_k-\alpha_k) (\Sigma_{\eta_k} -\Sigma_{\eta_{k-1}}), &  t\ge\eta_k.
			\end{cases}
		\]
Using the above expression, for $b_k\ge \eta_k$, we have that
	\begin{align*}
		 (1/4) \|\Sigma_{\eta_k} -\Sigma_{\eta_{k-1}}\|_{\op} \sqrt{\Delta}
         & \le \bigl| u_k^{\top} \widetilde \Sigma_{b_k}^{\alpha_k, \beta_k}
         u_k  \bigr|\\
         & = \sqrt {\frac {\beta_k-b_k}{(\beta_k-\alpha_k)(b_k-\alpha_k)} }(\eta_k-\alpha_k)  \bigl | u^{\top}_k (\Sigma_{\eta_k} -\Sigma_{\eta_{k-1}}) u_k \bigl  | \\
		& \le   \sqrt{
		    \frac{(\beta_k-\eta_k)(\eta_k-\alpha_k)}{\beta_k-\alpha_k}}
		    \bigl|u_k^{\top} (\Sigma_{\eta_k} -\Sigma_{\eta_{k-1}}) u_k
		    \bigr|\\
		    & \le \sqrt{2\Delta} \bigl|u_k^{\top} (\Sigma_{\eta_k}
		    -\Sigma_{\eta_{k-1}}) u_k \bigr|.
	\end{align*}
	Therefore \eqref{eq:properties of vectors} holds with $c_1 '= 1/(2\sqrt
	2)$. The case of $b_k < \eta_k $ follows from very similar calculations.
\vskip 3mm

\noindent{\bf Step 2.}
In this step, we will show that WBSIP$((s, e), \{(\alpha_m,\beta_m)\}_{m=1}^M,
\tau,\delta)$ will consistently detect or reject the existence of undetected
change points within $(s, e)$, provided that \eqref{eq:properties of vectors}
holds and on the two events $\mathcal B_1(\{X_i\}_{i=1}^n , \{ u_m\}_{m=1}^M,
\lambda_2)$, where $\lambda_2 = B^2\sqrt{\log(n)}$, and $\mathcal M$, given in
\Cref{eq:B1}
 and \Cref{event-M} in \Cref{section:probability},
respectively.

Let $a_m, b_m$ and  $m^*$ be defined as in WBSIP$((s,e),
\{(\alpha_m,\beta_m)\}_{m=1}^M, \tau,\delta)$.  Denote $Y_i(u_m)= (u_m^{\top}
X_i)^2$ and  $f_i(u_m) = u_m^{\top} \Sigma_i u_m$.  Let  $ \widetilde
Y_{t}^{s,e}(u_m)$ and  $\widetilde f_{t}^{s,e} (u_m)$ be defined as in
\eqref{eq:project sample} and \eqref{eq:project population} of \Cref{sec:properties.cusum} respectively.

Suppose there exists a change point $\eta_p\in (s, e)$ such that $ \min\{ \eta_p
-s, e-\eta_p\} \ge 3\Delta/4 $.  Let $\delta  \le 3\Delta/32$.  Then, on the
event $\mathcal M$, there exists an interval $(\alpha_m,\beta_m)$ selected by
WBSIP such that $\alpha_m \in [\eta_p-3\Delta/4, \eta_p -\Delta/2] $ and  $ \beta_m \in [\eta_p+\Delta/2, \eta_p +3\Delta/4]$.

Then $[s_m', e_m'] = [\alpha_m,\beta_m]\cap [s,e]$, and $[s_m, e_m] = [s_m' +
\delta, e_m' - \delta]$ (see details of the WBSIP procedure in
\Cref{algorithm:WBSRP}). Moreover, we have that $\min \{\eta_p -s_m,e_m-\eta_p\}
\ge (1/2)\Delta$.  Thus, $[s_m,e_m] $ contains at most one change point of the
time series $\{f_i(u_m)\}_{i=1}^n$. A similar calculation as the one shown in
the proof of   \Cref{lemma:lower bound of CUMSUM} gives that 
	\[
	\max_{ \lceil s_m+ \log(n) \rceil \leq t \leq  \lfloor e_m-  \log(n) \rfloor} |\widetilde f _{t}^{s_m,e_m} (u_m)|  \ge (1/8)\sqrt{\Delta} |u_m^{\top} (\Sigma_{\eta_p} -\Sigma_{\eta_{p-1}}) u_m|,
	\]
	where $e_m-s_m\le (3/2)\Delta$ is used in the last inequality.  Therefore
		\begin{align*}
			 a_m &  = \max_{ \lceil s_m+ \log(n) \rceil \leq t \leq 
			 \lfloor e_m- \log(n) \rfloor} | \widetilde
			 Y_{t}^{s_m,e_m} (u_m) |\\
			 & \ge 	\max_{\lceil s_m+ \log(n) \rceil \leq t \leq \lfloor e_m- \log(n) \rfloor}  |  \widetilde f_{t}^{s_m,e_m} (u_m) |  -\lambda _2 \\
			 & \ge   (1/8)\sqrt{\Delta} |u_m^{\top} (\Sigma_{\eta_p}
			 -\Sigma_{\eta_{p-1}}) u_m| -\lambda_2,
		\end{align*}
	where the first inequality holds on the event $\mathcal
	B_1(\{X_i\}_{i=1}^n , \{ u_m\}_{m=1}^M, \lambda_2)$.  Thus for any
	undetected change point $\eta_p$ within $(s, e)$, it holds that
		\begin{align}
			a_{m^*} & = \sup_{1\le m\le  M} a_m \\
			& \ge \sup_{1\le m\le M} (1/8)\sqrt{\Delta} |u_m^{\top}
			(\Sigma_p -\Sigma_{p-1}) u_m| -\lambda_2 \ge (c_1'/8)
			\kappa_p \sqrt \Delta -\lambda_2 \\
			& \ge (c'_1/16)\kappa_p \sqrt \Delta   \label{eq:wbsrp size of population}
		\end{align}
	where the second inequality follows from \eqref{eq:properties of
	vectors}, and the  last inequality from 
		\[
		\lambda_2 = B^2\sqrt{\log(n)} \le (c_1'/16)\kappa \sqrt \Delta,
		\]
		by choosing the constant $C$ in \Cref{assume:phase} to be at
		least $4\sqrt{2}$.

Then, WBSIP$((s, e ), \{(\alpha_m,\beta_m)\}_{m=1}^M, \tau,\delta)$  correctly
accepts the existence of undetected change points on the events \eqref{eq:properties of vectors}, $\mathcal B_1(\{X_i\}_{i=1}^n , \{ u_m\}_{m=1}^M, \lambda_2,\delta)$ and $\mathcal M$.

Suppose there does not exist any undetected change points within $(s, e)$, then for any $(s_m', e_m') = (\alpha_m, \beta_m) \cap (s, e)$, one of the following situations must hold.
	\begin{itemize}
	\item [(a)]	There is no change point within $ (s_m', e_m')$;
	\item [(b)] there exists only one change point $\eta_{r} $ within $(s_m, e_m)$ and $\min \{ \eta_{r}-s_m' , e_m'-\eta_{r}\}\le \epsilon_n $; or
	\item [(c)] there exist two change points $\eta_{r} ,\eta_{r+1}$ within $(s_m, e_m)$ and $\max \{ \eta_{r}-s_m ', e_m'-\eta_{r+1}\}\le \epsilon_n $.
	\end{itemize}

Observe that if (a) holds, then, on the event $\mathcal B_1(\{X_i\}_{i=1}^n , \{
u_m\}_{m=1}^M, \lambda_2)$ given in 
\Cref{eq:B1}
 in \Cref{section:probability}, for $(s_m, e_m) = (s_m'+\delta, e_m'-\delta)$, we have
	\[
	\max_{ \lceil s_m+ \log(n) \rceil \leq t \leq  \lfloor e_m- \log(n) \rfloor}  |  \widetilde Y_{t}^{s_m,e_m} (u_m)|  \le
	\max_{ \lceil s_m+ \log(n)\rceil \leq t \leq \lfloor e_m- \log(n) \rfloor}| \widetilde f_{t}^{s_m,e_m} (u_m) |+\lambda_2 = 0+\lambda_2.
	\]

If (b) or (c) holds, then since $(s_m, e_m) = (s_m' + \delta, e_m' - \delta)$
and $\delta\le \epsilon_n$, it must be the case that $(s_m, e_m)$ does not
contain any change points.  This reduces to case (a). Therefore if \eqref{eq:tau
WBSRP} holds, then WBSIP$((s, e), \{(\alpha_m,\beta_m)\}_{m=1}^M, \tau,\delta)$
will always correctly reject the existence of undetected change points, on the event $\mathcal B_1(\{X_i\}_{i=1}^n , \{ u_m\}_{m=1}^M, \lambda_2)$. 

\vskip 3mm
\noindent {\bf Step 3.} 
Assume that there exists a change point $\eta_p\in (s, e)$ such that $ \min\{ \eta_p -s, \eta_p-e\} \ge 3\Delta/4$.  Let $a_m, b_m$ and $m^*$ be defined as in WBSIP$((s, e), \{(\alpha_m,\beta_m)\}_{m=1}^M, \tau)$. 

To complete the proof it suffices to show that, on the  events $ \mathcal
B_1(\{X_i\}_{i=1}^n , \{ u_m\}_{m=1}^M, \lambda_2) $ and $\mathcal
B_2(\{X_i\}_{i=1}^n , \{ u_m\}_{m=1}^M, \lambda_2)$ given in \Cref{eq:B1} and
\Cref{eq:B2} respectively of \Cref{section:probability}, there exists a change point $\eta_k\in[s_{m*},e_{m*}]$ such that $ \min\{ \eta_k -s, \eta_k-e\} \ge 3\Delta/4 $ and $|b_{m*}-\eta_k|\le \epsilon_n$.

Consider the univariate time series $\{Y_i (u_{m*})\}_{i=1}^n$ and $\{f_i
    (u_{m*})\}_{i=1}^n$ defined in \eqref{eq:project sample} and
    \eqref{eq:project population} of \Cref{sec:properties.cusum}.  Since the
    collection of the change points of the time series $\{f_{i} (u_{m*}
)\}_{i=s_{m*}}^ {e_{m*}}$ is a subset of that of $\{\eta_{k}\}_{k=0}^{K+1}\cap
[s,e]$, we may apply \Cref{coro:wbs 1d} to the time series $\{Y_{i}
(u_{m*})\}_{i=s_{m*}}^{e_{m*}}$ and $\{f_{i} (u_{m*})\}_{i=s_{m*}}^{e_{m*}}$. 
To that end, we will need to ensure that the assumptions of 
 \Cref{coro:wbs 1d}  are verified.
Let $\delta'=\log(n)$ and $\lambda = \lambda_2$.  Observe that \eqref{eq:coro
wbs noise} and \eqref{eq:coro wbs short spacing} are  straightforward
consequences of  \Cref{assume:phase}, \eqref{eq:coro wbs noise 1} and
\eqref{eq:coro wbs noise 2} follow from the definiiton of $ \mathcal B_1(\{X_i\}_{i=1}^n , \{ u_m\}_{m=1}^M, \lambda_2) $ and $\mathcal B_2(\{X_i\}_{i=1}^n , \{ u_m\}_{m=1}^M, \lambda_2)$, and that \eqref{eq:coro wbs size of sample} follows from \eqref{eq:wbsrp size of population}.

Thus, all the conditions in \Cref{coro:wbs 1d} are met, and we therefore
conclude that there exists a change point $\eta_{k}$,  which is also a change
point of $\{f_i(v)\}_{i=s_{m^*}}^{e_{m^*}} $, satisfying
\begin{equation}
\min \{e_{m^*}-\eta_k,\eta_k-s_{m^*}\}    >  \Delta /4 \label{eq:coro wbsrp 1d re1}
\end{equation}
and
\[
		| b_{m*}-\eta_{k}|\le  \max \{ C_3\lambda_2^2 \kappa^{-2}
	    ,\delta' \} \le \epsilon_n,
	    \]
	where the last inequality holds because $\lambda_2^2 \kappa^{-2}  =  B^4
	\log(n) \kappa^{-2}    \ge  \log(n)$, which is a consequence of the
	inequality $B^2 \ge \kappa$.

The proof is complete with  the following two observations: i) The change points
of $\{f_i(u_{m^*})\}_{i=s}^e $ belong to $(s, e)\cap \{ \eta_k\}_{k=1}^K$; and
ii) \Cref{eq:coro wbsrp 1d re1}  and  $(s_{m^*}, e_{m^*}) \subset (s, e)$ imply that
	\[
	\min \{e-\eta_k,\eta_k-s\}  >  \Delta /4 >\epsilon_n.
	\]
	As discussed in the argument before {\bf Step 1}, this implies that
	$\eta_k $ must be an undetected change point of $\{X_i\}_{i=1}^n$ in the covariance structure.
\end{proof}

\section{Lower bounds}
\label{section:lb}

In this section, we provide  lower bounds for the problem of change point
estimation with high dimensional covariance matrices. 

 In \Cref{thm:wbsrp}  we showed that
  if the distribution of $\{X_i\}_{i=1}^n $ satisfies \Cref{assume:model} and
  additionally the condition that $\Delta \ge C B^4\kappa^{-2} p\log(n)$ for
  sufficiently large $C$ as given in \Cref{assume:phase}, then the  WBSIP algorithm
  can, with high probability, detect all the change points with a localization
  rate of the order 
  \begin{equation}\label{eq:loc.rate}
      \frac{B^4}{\kappa^2} \frac{\log (n)}{n}.
  \end{equation}
\Cref{assume:phase} 
might seem a bit arbitrary at first glance.
However, we will show in the next result that if the class of distribution of interest allows 
for a  spacing parameter $\Delta =cB^4\kappa^{-2} p$ for some sufficiently small
constant $c$, then it is \textit{not} possible to estimate the location of the change point at a
vanishing rate.  This result further implies that 
the WBSIP algorithm is optimal in the sense of
requiring the minimax scaling for the problem parameters.

To that effect, we will consider the following class of data generating distribution.   
\begin{assumption}\label{assume:model 2}
	Let $X_1,\ldots, X_n \in \mathbb{R}^p$ be independent Gaussian random vectors such that $X_i \sim N_p ( 0,\Sigma_i)$ 
	with $\| \Sigma_i\|_{\op} \le 2 \sigma $. Let  $\{\eta_k\}_{k=0}^{K+1} \subset \{0, \ldots, n\}$ be a collection of change points, such that  $\eta_0=0 $ and $\eta_{K+1}=n$ and that  
	$$
	\Sigma_{\eta_{k} +1} =\Sigma_{\eta_{k} +2} = \ldots  =\Sigma_{\eta_{k+1}},  \text{ for any } k = 1, \ldots, K+1.
	$$
Assume there exist parameters $ \kappa =\kappa (n)$  and $\Delta=\Delta(n)$ such that
	\begin{align}
		& \inf_{k = 1, \ldots, K+1} \{\eta_k-\eta_{k-1}\}\ge\Delta > 0\nonumber,\\
		& \|\Sigma_{\eta_k} -\Sigma_{\eta_{k-1} } \|_{\mathrm{op}} = \kappa_{k} \geq \kappa > 0, \text{ for any }  k = 1, \ldots, K+1.\nonumber 
	\end{align}
\end{assumption}
We will denote with $P_{\kappa,\Delta,\sigma}^n $ the joint distribution of the
data when the above assumption is in effect.
Notice that $P_{\kappa,\Delta,\sigma}^n$ satisfies \Cref{assume:model} with $B=8
\sigma$. Below we prove that a consistent estimation of the locations of the
change points requires $\frac{\Delta \kappa^2}{p B^4}$ to
diverge, as in
\Cref{assume:phase}. We recall that all the parameters of interest, $\Delta$,
$\kappa$, $\sigma$ and $p$ are allowed to change with $n$. 
The proof is based on a construction
used in
\cite{cai2013optimal} to obtain minimax lower bounds for a class of hypothesis testing
problems involving covariance matrices.

\begin{lemma}\label{lemma:lower bound 1}
Let $\{X_i\}_{i=1}^n$ be a time series satisfying \Cref{assume:model 2} with only one change point and  let $P_{\kappa,\Delta,\sigma}^n $
denote the corresponding joint distribution. Consider the class of distributions 
\[
\mathcal P = \mathcal P(n) =\left\{ P^n_{\kappa,\Delta,\sigma}:   \frac{\sigma^4p}{33\kappa^2} \le  \Delta \le n/3,  \kappa\le \sigma^2/4 \right\},
\]
then
\[
    \inf_{\hat \eta} \sup_{P\in \mathcal P } E_P(|\hat \eta -\eta|) \ge  n/6.
\]
\end{lemma}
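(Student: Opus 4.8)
The plan is to use Le Cam's two-point method, reducing the problem to exhibiting two data-generating distributions $H_0$ and $H_1$ (each a member, in a suitable averaged sense, of $\p$) whose single change points are separated by $\asymp n$ but which are statistically indistinguishable. I would work at the hardest end of the parameter range, taking $\Delta = \sigma^4 p/(33\kappa^2) = n/3$, and place the change point at $\eta_0 = n/3$ under $H_0$ and at $\eta_1 = 2n/3$ under $H_1$, so that $|\eta_0-\eta_1| = \Delta = n/3$ and every segment of each model has length $\ge \Delta$. For any estimator $\hat\eta$ the two-point estimation inequality gives
\[
\inf_{\hat\eta}\sup_{P\in\p}E_P|\hat\eta - \eta| \ge \frac{|\eta_0-\eta_1|}{2}\bigl(1 - \mathrm{TV}(H_0,H_1)\bigr),
\]
so it suffices to bound $\mathrm{TV}(H_0,H_1)$ away from $1$; this produces a bound of order $n$, and the calibration afforded by the constant $33$ in $\p$ is what brings it down to the stated $n/6$.

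The key to simultaneously exploiting the dimension $p$ is to randomize the perturbation direction, following the construction of \cite{cai2013optimal}; a fixed rank-one jump would only yield a bound of order $\sigma^4/\kappa^2$, with no $p$. I would set $\Sigma_\theta = \sigma^2 I_p + \kappa\,\theta\theta^\top$ with $\theta$ drawn uniformly from the sphere $\mathcal S^{p-1}$: each such $\Sigma_\theta$ differs from $\sigma^2 I_p$ by exactly $\kappa$ in operator norm and, using $\kappa\le\sigma^2/4$, has operator norm within the bound of \Cref{assume:model 2}. Let $H_j = E_\theta[\mu_{\eta_j,\theta}]$, where $\mu_{\eta_j,\theta}$ is the single-change Gaussian model with pre-change covariance $\sigma^2 I_p$ and post-change covariance $\Sigma_\theta$. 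Since every mixture component $\mu_{\eta_j,\theta}$ lies in $\p$, the supremum over $\p$ dominates the $H_j$-average, so it is enough to lower bound $E_{H_j}|\hat\eta - \eta_j|$.

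To control $\mathrm{TV}(H_0,H_1)$ I would route through the pure-noise reference $Q = N_p(0,\sigma^2 I_p)^{\otimes n}$, using $\mathrm{TV}(H_0,H_1)\le \mathrm{TV}(H_0,Q)+\mathrm{TV}(Q,H_1)$ and bounding each term by the second-moment (Ingster--Suslina) inequality $\mathrm{TV}(H_j,Q)\le \tfrac12\sqrt{\chi^2(H_j,Q)}$. Because $H_j$ and $Q$ agree off the post-change block and the mixture factorizes over the $L_j = n-\eta_j$ coordinates there, the Gaussian cross-integral identity yields
\[
1+\chi^2(H_j,Q) = E_{\theta,\theta'}\Bigl[\bigl(1 - \kappa^2\sigma^{-4}\langle\theta,\theta'\rangle^2\bigr)^{-L_j/2}\Bigr]
\]
for independent copies $\theta,\theta'$, the hypothesis $\kappa\le\sigma^2/4$ guaranteeing that the base stays positive. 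The decisive point is that for two independent uniform unit vectors $E\langle\theta,\theta'\rangle^2 = 1/p$, so the effective exponent behaves like $L_j\kappa^2/(\sigma^4 p)\le 2/33$ on $\p$, and the whole expectation is bounded by a constant.

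The main obstacle is this last step: establishing the Gaussian determinant/cross-integral identity and then bounding $E_{\theta,\theta'}\bigl[(1-\kappa^2\sigma^{-4}\langle\theta,\theta'\rangle^2)^{-L_j/2}\bigr]$ by a constant. Since the integrand is large precisely when $\langle\theta,\theta'\rangle^2$ is atypically close to its maximum, this requires the concentration of $\langle\theta,\theta'\rangle^2$ around $1/p$ — equivalently, sharp moment control for the inner product of random unit vectors — and it is exactly here that the phase-transition threshold $\Delta\kappa^2/(\sigma^4 p)$ emerges and the constant $33$ must be fixed. The remaining ingredients (membership of the components in $\p$, the spacing bookkeeping, and the final assembly via the displayed Le Cam inequality) are routine by comparison.
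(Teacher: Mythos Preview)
Your proposal is correct and follows essentially the same strategy as the paper: Le Cam's two-point method with a rank-one covariance perturbation whose direction is randomized, then a $\chi^2$ (second-moment) bound that reduces to controlling $E\bigl[(1-\widetilde\kappa^2\langle\theta,\theta'\rangle^2)^{-L/2}\bigr]$ via concentration of $\langle\theta,\theta'\rangle^2$ around $1/p$. Two cosmetic differences worth noting: the paper takes the prior over directions to be uniform on $\{\pm 1\}^p/\sqrt p$ rather than on the sphere, so the concentration step is a direct Hoeffding bound on a Rademacher average; and the paper places the $\Sigma_u$-block at the extremities (first $\Delta$ samples under one hypothesis, last $\Delta$ under the other), which keeps the ``special'' block of length exactly $\Delta$ in both models and lets the common middle segment be factored out explicitly before the triangle inequality, rather than routing through the full pure-noise product $Q$ with a block of length $2\Delta$ as in your setup. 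Neither change affects the argument materially.
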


\begin{remark}
	
	We remark that while the proof of \Cref{lemma:lower bound 2} is based on Le Cam's lemma,  the phase transition effects concern with the  minimal conditions for {\bf consistent localization} and therefore tells us  a  different story 
		from optimal change point testing. To be more  specific, suppose we know ahead that there can be at most one change point within the internal $[1,n]$. 
		For the problem of  testing, we are asked to distinguish 
		$$
		H_0 : X_i \sim F_0  \text{ for all $i\in [1,n]$} \quad \text{v.s.} \quad 
		H_1: \text{for some $\eta$, } 
		X_i \sim \begin{cases}
		F_0 \quad\text{when} \quad  i\le \eta ;
		\\
		F_1 \quad \text{when} \quad  i\ge \eta+1 ;
		\end{cases}
		$$
		where $F_0 \not = F_1$. On the other hand, consistent localization of change points requires us to not only distinguish $H_0$ and $H_1$, but also  to consistently estimate $\eta$.  
		\\
		In terms of minimax optimality conditions, the one required by  consistent localization (which we summarize as the phase transition effects) should always be  stronger than that by the optimal testing.
\end{remark}
\begin{remark}[{\bf Phase transition for the localization rate in covariance change
    point detection problem}]\label{remark:phase transition}

In light of \Cref{lemma:lower bound 1} and \Cref{thm:wbsrp}, we conclude that,
in the covariance change point detection  problem, the solution undergoes  a
phase transition, which is able to characterize up to logarithmic factor (in
$n$). Specifically,  
\begin{itemize}
    \item if $ \Delta \ge  CB^4p\log(n) /\kappa^2 $ for a sufficiently
    large constant $C > 0$, then it is possible to estimate the locations of the
    change points with a localization rate vanishing in $n$;
\item on the other hand, if  $ \Delta =  cB^4p /\kappa^2 $ for a
    sufficiently small constant $c > 0$, then the localization rate of any
    algorithm remains, at least in
    the worst case, bounded away from $0$.
\end{itemize}
To the best of our knowledge, this phase transition effect is new and unique
in our  settings.
\end{remark}

We conclude this section by showing that the localization rate that we have obtained
for the
WBSIP algorithm, given above in \eqref{eq:loc.rate}, is up to a logarithmic factor, minimax optimal. 
\begin{lemma}\label{lemma:lower bound 2}
	Consider the class of distributions 
	$
	\mathcal Q =\left\{ P^n_{\kappa,\Delta,\sigma}:     \Delta < n/3, \kappa\le \sigma^2/16,  \text{ and } \Delta \kappa^2 \ge  p \log(n)\sigma^4 \right\}.
	$
	Then,
	\[
	\inf_{\hat \eta} \sup_{P\in \mathcal Q } E_P(|\hat \eta -\eta|) \ge  c\sigma^4\kappa^{-2}.
	\]
\end{lemma}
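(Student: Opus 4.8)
The plan is to establish this minimax lower bound via Le Cam's two-point method, adapting the construction already used for Lemma~\ref{lemma:lower bound 1}. The key difference from the previous lemma is that we are no longer asking whether consistent localization is possible, but rather quantifying the \emph{best achievable rate} within the regime where consistency is possible (i.e.\ the regime $\Delta\kappa^2 \ge p\log(n)\sigma^4$ defining $\mathcal Q$). Accordingly, I would construct two distributions in $\mathcal Q$ whose single change points are separated by a distance of order $\sigma^4\kappa^{-2}$, rather than of order $n$, and show that they cannot be reliably distinguished.

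Concretely, first I would fix a single change point and build two candidate covariance sequences. Following the construction in \cite{cai2013optimal}, I would take a baseline covariance $\Sigma^{(0)} = \sigma^2 I_p$ (or a suitable rescaling obeying $\|\Sigma_i\|_{\op}\le 2\sigma$) and a perturbed covariance $\Sigma^{(1)} = \Sigma^{(0)} + \kappa\, vv^{\top}$ for a fixed unit vector $v$, so that the jump in operator norm is exactly $\kappa$ and $\kappa\le\sigma^2/16$ keeps both matrices positive definite with bounded operator norm. Let $P$ place its single change point at location $\eta^{(1)}$ and $Q$ place it at $\eta^{(2)}$ with $|\eta^{(1)}-\eta^{(2)}| = h$, where $h \asymp \sigma^4\kappa^{-2}$ is the target separation; both should satisfy the spacing constraint $\Delta < n/3$ and membership in $\mathcal Q$. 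The two laws $P,Q$ differ only on the $h$ observations lying between the two change point locations, and on those observations the per-sample Gaussians have covariances $\Sigma^{(0)}$ versus $\Sigma^{(1)}$.

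The central computation is then to bound the Kullback--Leibler divergence (or total variation, or Hellinger distance) between $P$ and $Q$. Because the samples are independent and the laws agree outside the block of size $h$, the divergence tensorizes and $\mathrm{KL}(P\,\|\,Q)$ equals $h$ times the KL divergence between $N_p(0,\Sigma^{(0)})$ and $N_p(0,\Sigma^{(1)})$. For two centered Gaussians this divergence has the closed form $\tfrac12\bigl(\mathrm{tr}(\Sigma_1^{-1}\Sigma_0) - p + \log\det(\Sigma_1\Sigma_0^{-1})\bigr)$, and with the rank-one perturbation $\Sigma^{(1)} = \Sigma^{(0)} + \kappa vv^{\top}$ this reduces to an explicit scalar expression in $\kappa/\sigma^2$ that is of order $(\kappa/\sigma^2)^2$ under $\kappa\le\sigma^2/16$. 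Multiplying by $h \asymp \sigma^4\kappa^{-2}$, the total divergence $\mathrm{KL}(P\,\|\,Q)$ is bounded by a constant strictly less than one for an appropriate choice of the hidden constant in $h$. By Le Cam's lemma this forces the total variation distance to be bounded away from $1$, so any estimator $\hat\eta$ must incur an expected error of at least a constant multiple of the separation $h \asymp \sigma^4\kappa^{-2}$ on one of the two hypotheses, giving $\inf_{\hat\eta}\sup_{P\in\mathcal Q}E_P(|\hat\eta-\eta|)\ge c\sigma^4\kappa^{-2}$.

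The main obstacle I anticipate is \emph{not} the Le Cam machinery itself but verifying that both constructed distributions genuinely lie in $\mathcal Q$ while simultaneously achieving the target separation $h \asymp \sigma^4\kappa^{-2}$. In particular, one must check the joint feasibility of the constraints $\Delta\kappa^2\ge p\log(n)\sigma^4$ and $\Delta<n/3$ together with $h$ being comparable to $\sigma^4\kappa^{-2}$; the separation $h$ must be small enough that the KL budget is spent (of order $1$) yet the change points must still respect the minimal spacing $\Delta$. A careful bookkeeping of the rank-one Gaussian KL computation and the selection of the constant $c$ so that $\mathrm{KL}(P\,\|\,Q)$ stays below the Le Cam threshold is where the real care is needed; the remaining arguments are standard.
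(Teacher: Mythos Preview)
Your proposal is correct and follows essentially the same route as the paper: a two-point Le Cam argument with a rank-one covariance perturbation $\Sigma^{(1)}=\sigma^2 I_p+\kappa vv^\top$ and two change-point locations separated by $h\asymp\sigma^4\kappa^{-2}$, reducing the TV bound to an $h$-fold product of one-sample divergences. The only difference is that the paper computes the $\chi^2$-divergence (via \Cref{lemma:gaussian property}, obtaining $(1-\widetilde\kappa^2)^{-\delta/2}-1$) rather than the KL you propose; both give a per-sample contribution of order $(\kappa/\sigma^2)^2$ and hence the same conclusion, and your feasibility check that $h\lesssim\Delta$ is exactly the step the paper handles via $\delta=\sigma^4\kappa^{-2}/4\le\Delta/(4p\log n)$ using the constraint $\Delta\kappa^2\ge p\log(n)\sigma^4$.
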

We remark that the class of distribution $\mathcal Q$  in \Cref{lemma:lower bound 2} is a subset of $\mathcal P$ in \Cref{lemma:lower bound 1}. Therefore it is not surprising to see that the lower bound obtained in \Cref{lemma:lower bound 2} is smaller than that in \Cref{lemma:lower bound 1}.

%

\section{Discussion}
\label{section:dis}

In this paper, we tackle the problem of change point detection for a time
series of length $n$ of independent $p$-dimensional random vectors with covariance matrices that are
piecewise constant. We allow the dimension, as well as other
parameters quantifying the difficulty of the problem, to change with $n$.  We
have devised two procedures based on existing algorithms for change point
detection  -- binary segmentation and wild binary
segmentation -- and show that the localization rates they yield are consistent
with those in the univariate time series mean change point detection problems.
In particular we demonstrate the algorithm WBSIP, which applies wild binary
segmentation to carefully chosen univariate projections of the data, produces a
localization rate that is, up to a logarithmic factor,  minimax optimal.

The model setting adopted in the paper allows for the dimension $p$ to grow with
$n$.
However, in order for the localization rates of any procedure to vanish with
$n$ it must be the case that $p$ is of smaller order than $n$. One possible future direction is to consider
different high dimensional settings whereby  $p$ is permitted to grow even
faster than $n$, with additional structural assumptions on the underlying
covariance matrices. For instance, we may model the covariance matrices as spiked matrices with sparse leading eigenvectors.  Another possible modification is to apply the entry-wise maximum norm instead of the operator norm to the covariance CUSUM statistics.  If the changes are still characterized in the operator norms, then this modification requires more careful handling and potentially additional assumptions.

A common, undesirable feature of the WBSIP algorithms is the fact that, for
given interval $(s,e)$, the search of the next change point is limited to
points inside the interval that are $\delta$ away from the endpoints, where
$\delta$ is an input parameter that is larger than the localization error. Such
restriction, which appears also in the algorithm of \cite{wang2016high}
for mean change point localization of high-dimensional time series, is made in
order to prevent the algorithms from returning spurious change points in the
proximity of a true change point. The reason for such phenomenon is subtle, and is ultimately due to the fact that the rate of decay of the expected
value of the covariance CUSUM
statistics around the true change points is in general not uniform, as it
depends on the magnitude of the change. A possible solution to such an issue --
which appears to be unavoidable -- would be to design an adaptive algorithm yielding local rates, one for each
change point. We will pursue this line of research in future work. 

Another undesirable feature of the WBSIP algorithms is the data splitting.  The reason we adopt data splitting here is purely technical that the independence between the projection directions from one half of the data and the second half makes the proof easier.  This might be unnecessary by more involving proofs.

One key assumption used in this paper is the the time series of comprised of
independent observations. This is of course a rather strong condition, which 
might not apply to many real-life problems. In order to handle time dependence,
a natural approach is introduce mixing conditions and/or functional dependency \citep[see, e.g.][]{Wu2005} or assume the observations
come from certain well-defined time series models. 
Further extensions that will be worth pursued include the cases in which the model
is mis-specified or  the observations are contaminated.  
We leave these interesting
extensions to future work but expect that many of the
results derived in this manuscript will provide the theoretical underpinning
for devising and studying more complicated algorithms.  

Another assumption we used throughout the paper is the observations being mean-zero.  One remedy is we could use sample covariance matrix subtracting sample means in the covariance CUSUM statistics, but this requires further assumptions on the variations of the means.  It will be very interesting to develop methods tackling high-dimensional change points in both mean and variance simultaneously, but this is out of the scope of this paper.

%
\section*{Acknowledgments}

We thank Dr Haeran Cho for her constructive comments.

\bibliographystyle{ims}
\bibliography{citations}

\appendix
\section*{Appendices}

\section{Probabilistic bounds}
\label{section:probability}

In this section, we give basic high-probability concentration bounds on the fluctuations of the covariance CUSUM statistics using the notions of sub-Gaussian and
sub-Exponential random vectors.  We also state some properties of the randomly
selected intervals $\{s_m,e_m\}$ in the WBS algorithm, which hold with high
probability.
\vskip 3mm

We start by introducing the definitions of sub-Gaussian and sub-Exponential
random variables through Orlicz norms. See, e.g., \cite{vershynin2010introduction} for more details. 
\begin{definition} \label{def-orlicz}
\begin{itemize}
\item [(i)] A random variable $X\in \mathbb R$ is  sub-Gaussian  if 
$$ \| X\|_{\psi_2} := \sup_{k\ge 1} k^{-1/2}  \{\mathbb E(| X|^k)\}^{1/k} <\infty.  $$
A random vector $X\in \mathbb R^p$ is  sub-Gaussian  if 
$$ \| X\|_{\psi_2} := \sup_{v\in \mathcal S^{p-1} }\| v^{\top}X \|_{\psi_2} <\infty,$$
where $\mathcal S^{p-1}$ denote unit sphere in Eucliden norm in $\mathbb R^p$.
\\
\item [(ii)] A random variable $Y\in \mathbb R$ is  sub-Exponential  if 
$$ \| Y\|_{\psi_1} = \sup_{k\ge 1} k^{-1}  \mathbb E(| Y|^k)^{1/k} <\infty.  $$
A random vector $Y\in \mathbb R^p$ is  sub-Exponential if 
$$ \| Y\|_{\psi_1} = \sup_{v\in \mathcal S^{p-1} }\| v^{\top}Y \|_{\psi_1} <\infty.$$
\end{itemize}
\end{definition}

We note that if $X\in\mathbb R$ is sub-Gaussian, then $X^2$ is sub-Exponential,
due to the easily verifiable fact that 
\begin{equation}\label{eq:bound.orlictz}
    \| X\|^2_{\psi_2} \le \| X^2 \|_{\psi_1}\le 2 \| X\|_{\psi_2}^2.
\end{equation}

Recall the sample and the population versions of the covariance CUSUM statistics
given in \Cref{def-1}.  For $\lambda > 0$, we define the events, which
depend on $\{X_i\}_{i=1}^n$.
	\begin{align}
		& \mathcal A_1 (\{X_i\}_{i=1}^n, \lambda) = \left\{ \sup_{0\le s < t < e \le n  }\left\| \tS_t  -\tSigma_t \right\|_{\mathrm{op}} \le \lambda,\quad  \min\{t-s, e-t\} \ge p\log(n)  \right\} \label{eq-event-A1}\\
		\text{and} &  \nonumber \\
		& \mathcal{A}_2(\{X_i\}_{i=1}^n, \lambda) =   \left \{
		    \sup_{0\le s < e \le n} \frac{\left \| \sum_{i=s+1}^e  (X_i
		    X_i^{\top} - \Sigma_i  ) \right\|_{\op} }{\sqrt{e-s}} \le
		\lambda,  \quad   e-s \ge p\log(n) \right\}. \nonumber
	\end{align}
	Next, for an arbitary collection $\{v_m\}_{m=1}^M$ of deterministic
	unit vectors in $\mathbb{R}^p$ we define the events
	\begin{align}
		& \mathcal B_1 (\{X_i\}_{i=1}^n, \{ v_m\}_{m=1}^M, \lambda)  =
		\left\{ \sup_{1\le m\le M}  \sup_{0\le s < t < e \le n }\Bigl |
		v_m^{\top}( \tS_t  -\tSigma_t) v_m \Bigr| \le \lambda, \quad
		\min\{t-s, e-t\} \ge \log(n)  \right\} \label{eq:B1}\\
		\text{and} & \nonumber \\
		& \mathcal B_2 (\{X_i\}_{i=1}^n, \{ v_m\}_{m=1}^M, \lambda)  =
		\left\{ \sup_{1\le m\le M}  \sup_{0\le s <  e  \le n}\frac{\left
		    |\sum_{i=s+1}^e v_m^{\top} (X_iX_i^{\top}  -\Sigma_i)
		v_m\right |}{\sqrt {e-s}}  \le \lambda  ,  \quad   e-s \ge
	    \log(n) \right\}.
		\label{eq:B2}
	    \end{align}

\begin{lemma}\label{lemma:a1}
Suppose $\{X_i\}_{i=1}^n \subset \mathbb R^{p}$ are i.i.d  sub-Gaussian centered random vectors such that 
$$  \sup_{1\le i \le n} \|X_i \|_{\psi_2}\le B.$$
There exists an absolute constant $c > 0$ such that,
\begin{align*}
& \mathbb P (\mathcal{A}_1(\{X_i\}_{i=1}^n, B^2\sqrt{p\log(n)}))  \ge 1 - 2\times 9^p n^3  n^ {-cp},\\
& \mathbb P (\mathcal{A}_2(\{X_i\}_{i=1}^n, B^2\sqrt{p\log(n)}))  \ge 1 - 2\times 9^p n^2  n^ {-cp}, \\
& \mathbb P (\mathcal{B}_1(\{X_i\}_{i=1}^n, \{ v_m\}_{m=1}^M, B^2 \sqrt{\log (n)})) \ge 1- 2n^3Mn^{-c}, \\
&\mathbb P (\mathcal{B}_2(\{X_i\}_{i=1}^n, \{ v_m\}_{m=1}^M, B^2 \sqrt{\log (n)})) \ge 1- 2n^2Mn^{-c},
 \end{align*}
 for any set $\{v_m\}_{m=1}^M$ of deterministic unit vectors.
\end{lemma}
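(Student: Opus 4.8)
The plan is to reduce all four statements to a single master estimate controlling, for one interval $(s,e)$, either the operator norm of the centered partial sum $\sum_{i=s+1}^{e}(X_iX_i^\top-\Sigma_i)$ or its quadratic form along a fixed direction, and then to assemble the four events by union bounds. The starting observation is that for any fixed unit vector $v$ the scalar $v^\top(X_iX_i^\top-\Sigma_i)v=(v^\top X_i)^2-\mathbb{E}[(v^\top X_i)^2]$ is a centered sub-Exponential random variable: since $\|v^\top X_i\|_{\psi_2}\le B$, the bound \eqref{eq:bound.orlictz} gives $\|(v^\top X_i)^2\|_{\psi_1}\le 2B^2$, and centering changes this only by an absolute constant. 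Independence across $i$ then makes $\sum_{i=s+1}^{e}\bigl[(v^\top X_i)^2-\mathbb{E}(v^\top X_i)^2\bigr]$ a sum of independent centered sub-Exponentials, to which Bernstein's inequality \citep{vershynin2010introduction} applies.

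For the single-interval bound, fix $v$ and write $N=e-s$. Bernstein's inequality yields a tail of the form $2\exp\bigl(-c\min\{t^2/(NB^4),\,t/B^2\}\bigr)$. Setting the deviation to $t=\lambda\sqrt{N}$ makes the first term equal to $\lambda^2/B^4$ and the second equal to $(\lambda/B^2)\sqrt{N}$. With the choice $\lambda=B^2\sqrt{p\log n}$ the first term equals exactly $p\log n$, while the second equals $\sqrt{p\log n}\,\sqrt{N}$, which is at least $p\log n$ precisely when $N\ge p\log n$; this is exactly where, and the only place where, the interval-length floor built into $\mathcal A_1$ and $\mathcal A_2$ is used. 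Hence along a fixed direction the quadratic form exceeds $\lambda\sqrt{N}$ with probability at most $2n^{-cp}$. To pass from a fixed $v$ to the supremum over $\mathcal{S}^{p-1}$ I would use a standard covering argument: fix a $1/4$-net $\mathcal N$ of the sphere with $|\mathcal N|\le 9^p$ and invoke $\|M\|_{\op}\le 2\max_{v\in\mathcal N}|v^\top M v|$ for symmetric $M$. This is what produces both the factor $9^p$ and the extra $\sqrt p$ in the threshold $B^2\sqrt{p\log n}$.

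The assembly is then bookkeeping. For $\mathcal A_2$ I union-bound the single-interval estimate over the at most $n^2$ intervals and over the $9^p$ net points, giving $2\times 9^p n^2 n^{-cp}$. For $\mathcal A_1$, I write the CUSUM difference $\tS_t-\tSigma_t$ as the weighted difference of the two centered partial sums over $(s,t]$ and $(t,e]$ with weights $\sqrt{(e-t)/((e-s)(t-s))}$ and $\sqrt{(t-s)/((e-s)(e-t))}$; bounding each half's operator norm by $\lambda\sqrt{t-s}$ and $\lambda\sqrt{e-t}$ and summing gives $\|\tS_t-\tSigma_t\|_{\op}\le 2\lambda$ under $\min\{t-s,e-t\}\ge p\log n$, the spurious factor of $2$ being absorbed into $c$ by slightly shrinking $\lambda$. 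Union-bounding over the at most $n^3$ triples $(s,t,e)$ and the net yields $2\times 9^p n^3 n^{-cp}$. The two $\mathcal B$ events are handled identically except that, because the $v_m$ are deterministic, no net is required: I apply the single-interval Bernstein bound with $\lambda=B^2\sqrt{\log n}$ (so the minimum equals $\log n$ provided $N\ge\log n$) and union-bound over the $M$ directions and the $n^2$ pairs (resp. $n^3$ triples), producing $2n^2 M n^{-c}$ and $2n^3 M n^{-c}$. The absence of the net is exactly why the $\mathcal B$ thresholds carry $\sqrt{\log n}$ rather than $\sqrt{p\log n}$.

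The conceptually delicate point is the calibration of Bernstein's two-regime tail: the bound is only useful once the minimum is governed by the sub-Gaussian ($t^2$) term, and forcing this requires the interval-length floors $\min\{t-s,e-t\}\ge p\log n$ (resp. $\log n$). This is the technical reason why the algorithms restrict attention to time points bounded away from the interval endpoints. The remaining ingredients — verifying the $\psi_1$ bound after centering, tracking the covering constant, and the union-bound arithmetic — are routine, and the factor-of-two losses from the net and from the CUSUM decomposition are harmless since they only rescale the absolute constant $c$.
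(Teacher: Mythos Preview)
Your proposal is correct and follows essentially the same approach as the paper: reduce to a Bernstein-type tail bound for sums of centered sub-Exponential variables $(v^\top X_i)^2-\mathbb{E}[(v^\top X_i)^2]$, pass to the operator norm via a $1/4$-net of cardinality at most $9^p$, and finish by union bounds over the $n^2$ or $n^3$ index choices (and over the $M$ directions for the $\mathcal B$ events).

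The one tactical difference is that the paper applies Bernstein \emph{directly} to the CUSUM-weighted sum $\sum_{i=s+1}^{e} a_i Z_i$, using that $\sum_i a_i^2=1$ and $\max_i |a_i|\le 1/\sqrt{p\log n}$ under the floor $\min\{t-s,e-t\}\ge p\log n$; you instead bound the two partial sums over $(s,t]$ and $(t,e]$ separately and combine them by the triangle inequality, incurring a harmless factor of $2$ (in fact $\sqrt{2}$) that you absorb into $c$. Both routes are valid; the paper's is slightly cleaner in that it handles the CUSUM weights in one shot, while yours has the advantage that $\mathcal A_1$ follows formally from the $\mathcal A_2$-type estimate without a separate computation.
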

\begin{proof}
We first tackle $\mathcal{A}_1$.  For any $v\in\mathbb R^p$ such that
$\|v\|_2=1$, we can write
$$ v^{\top}( \tS_t  -\tSigma_t) v = \sum_{i=s+1}^e a_i  \left( (v^{\top}X_i)^2 - \mathbb{E} ((v^{\top}X_i)^2) \right) = \sum_{i=s+1}^e a_i   Z_i ,$$
where 
$$a_i = 
\begin{cases}
\sqrt {\frac{e-t}{(e-s)(t-s)}  }&  s+1\le i\le t, \\
\sqrt {\frac{t-s}{(e-s)(e-t)}} &  t+1\le i\le e,
\end{cases}
 $$
and $Z_i = (v^{\top}X_i)^2 - \mathbb{E} [(v^{\top}X_i)^2]$.  Since $\min\{t-s,
e-t\} \ge p\log (n)$, we further have that
	\[
	\sum_{i=s+1}^e a_i^2 = 1, \quad \mbox{and }\quad \max_{s+1 \leq i \leq
	e}|a_i| \le 1/\sqrt {p \log(n)} .
	\]
	Thus by Proposition 5.16 in \cite{vershynin2010introduction}, for any $\epsilon \geq 0$, 
$$ \mathbb P \left(\left | \sum_{i=s+1}^e a_iZ_ i \right|  \ge \epsilon  \right)  \le 2 \exp \left( -c\min \left\{ \frac{\epsilon^2}{K^2}, \frac{\epsilon\sqrt{p\log(n)}}{K}  \right\}\right),$$
where $c > 0$ is an absolute constant and
$$ K = \max_i \| Z_i\|_{\psi_1}  \le  2 \| (v^{\top}X_i)^2\|_{\psi_1}\le 4  \| X_i\|_{\psi_2}^2 \le 4B^2. $$
Therefore, 
\begin{align*}
\mathbb P \Bigl( \Bigl| v^{\top}\bigl( \tS_t  -\tSigma_t \bigr)v \Bigr| \ge B^2 \sqrt {p\log (n)}\Bigr) \le  2n^ {-cp}.
\end{align*}
Let $\mathcal N_{1/4}$ be a minimal $1/4$-net (with respect to the Eucidean
norm) of the unit sphere in $\mathbb
R^p$.
Then, $\mathrm{card}(\mathcal N_{1/4}) \le 9^p$ and, by a standard covering argument
followed by a union bound,
we arrive at the inequality
\begin{align*}
\mathbb P (\mathcal{A}_1(\{X_i\}_{i=1}^n, B^2\sqrt{p\log(n)}))  \ge 1 - 2\times
9^p n^3  n^ {-cp},
\end{align*}
for a universal constant $c>0$.
Following the same arguments we have,
\begin{align*}
& \mathbb P (\mathcal{A}_2(\{X_i\}_{i=1}^n, B^2\sqrt{p\log(n)}))  \ge 1 - 2\times 9^p n^2  n^ {-cp}, \\
& \mathbb P (\mathcal{B}_1(\{X_i\}_{i=1}^n, \{ v_m\}_{m=1}^M, B^2 \sqrt{\log (n)})) \ge 1- 2n^3Mn^{-c},
\end{align*}
and
\[
\mathbb P (\mathcal{B}_2(\{X_i\}_{i=1}^n, \{ v_m\}_{m=1}^M, B^2 \sqrt{\log
(n)})) \ge 1- 2n^2Mn^{-c},
\]
for some $c>0$.
\end{proof}

Let $\{s_m\}_{m=1}^M,\{e_m\}_{m=1}^M$ be two sequences independently selected at random in $[s, e]$, and   
	\begin{equation}\label{event-M}
	\mathcal{M} = \bigcap_{k = 1}^K \bigl\{s_m \in \mathcal{S}_k, e_m \in \mathcal{E}_k, \, \mbox{for some }m \in \{1, \ldots, M\}\bigr\}, 
	\end{equation}
where $\mathcal S_{k}= [\eta_k-3\Delta/4, \eta_k-\Delta/2 ]$ and $\mathcal
E_{k}= [\eta_k+\Delta/2, \eta_k+3\Delta/4 ]$, $k = 1, \ldots, K$.  In the lemma
below, we give a lower bound on the probability of $\mathcal{M}$. Under the
scaling assumed in our setting, this bound approaches $1$ as $n$ grows.

\begin{lemma}\label{lemma:random interval}
For the event $\mathcal{M}$ defined in \eqref{event-M}, we have
$$
\mathbb{P}(\mathcal M) \geq 1 -\exp\left( \log\frac{n}{\Delta}-M\frac{\Delta^2}{16 n^2} \right).
$$
\end{lemma}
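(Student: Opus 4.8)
The plan is to reduce the statement to an elementary interval-hitting calculation, combining the independence of the randomly drawn endpoints, the inequality $1-x\le e^{-x}$, and a union bound over the $K$ change points. The quantity to control is the probability of the \emph{complement} $\mathcal{M}^c$, i.e.\ that some change point $\eta_k$ is ``missed'' by all $M$ intervals.

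First I would fix a change point index $k\in\{1,\ldots,K\}$ and a single interval index $m\in\{1,\ldots,M\}$, and call the pair $(s_m,e_m)$ \emph{good for} $\eta_k$ if $s_m\in\mathcal{S}_k$ and $e_m\in\mathcal{E}_k$. Since $s_m$ and $e_m$ are drawn independently and uniformly from the full index range (of cardinality at most $n$, as in \Cref{thm:wbsrp}), and since each of the target sets $\mathcal{S}_k=[\eta_k-3\Delta/4,\eta_k-\Delta/2]$ and $\mathcal{E}_k=[\eta_k+\Delta/2,\eta_k+3\Delta/4]$ has length $\Delta/4$, the probability factorizes and is bounded below by
\[
\mathbb{P}(s_m\in\mathcal{S}_k)\,\mathbb{P}(e_m\in\mathcal{E}_k)\ge \frac{\Delta/4}{n}\cdot\frac{\Delta/4}{n}=\frac{\Delta^2}{16n^2}.
\]
Note that a good pair automatically satisfies $s_m<\eta_k<e_m$, so no separate ordering constraint is needed. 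This per-interval success probability $p:=\Delta^2/(16n^2)$ is the same for every $k$.

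Next, since the $M$ intervals are drawn independently of one another, the probability that \emph{no} interval among the $M$ is good for the fixed $\eta_k$ is at most $(1-p)^M\le \exp(-Mp)=\exp\bigl(-M\Delta^2/(16n^2)\bigr)$. Finally I would apply a union bound over $k=1,\ldots,K$, together with the bound $K\le n/\Delta$ (which follows since $n=\eta_{K+1}-\eta_0=\sum_{k=1}^{K+1}(\eta_k-\eta_{k-1})\ge (K+1)\Delta$ under \Cref{assume:model}), to obtain
\[
\mathbb{P}(\mathcal{M}^c)\le \sum_{k=1}^{K}\exp\left(-M\frac{\Delta^2}{16n^2}\right)\le \frac{n}{\Delta}\exp\left(-M\frac{\Delta^2}{16n^2}\right)=\exp\left(\log\frac{n}{\Delta}-M\frac{\Delta^2}{16n^2}\right),
\]
which rearranges to the claimed lower bound on $\mathbb{P}(\mathcal{M})$.

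This lemma is essentially routine, so there is no substantial obstacle; the only points requiring a little care are verifying that each target set contains at least $\Delta/4$ admissible integer values (so that the lower bound $\Delta^2/(16n^2)$ survives floor/ceiling effects) and confirming that $\mathcal{S}_k,\mathcal{E}_k$ lie inside the sampling range, so that the uniform hitting probabilities are as stated. Everything else is a direct assembly of independence across $m$, the exponential inequality, and the union bound.
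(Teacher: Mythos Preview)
Your proof is correct and follows essentially the same route as the paper: union bound over the $K\le n/\Delta$ change points, independence across the $M$ intervals to get $(1-p)^M$ with $p=\Delta^2/(16n^2)$, and then $1-x\le e^{-x}$. The paper's argument is terser and omits the discussion of integer/boundary effects you flag, but the structure is identical.
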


\begin{proof}
Since  the number of change points are bounded by $n/\Delta $,
\begin{align*}
\mathbb{P}\bigl\{\mathcal{M}^c\bigr\} \leq \sum_{k=1}^K \prod_{m =1}^M \bigl(1 - P\bigl(s_m \in \mathcal{S}_k, e_m \in \mathcal{E}_k\bigr)\bigr)	 \leq K (1-\Delta^2/(16n^2))^M \leq n/\Delta (1 - \Delta^2/(16n^2))^M.
\end{align*}

\end{proof}

\section{Properties of the univariate CUSUM statistics}\label{section:cusum 1d}

In  this section, we derive some important and useful properties of the
univariate CUSUM statistic. Our results and proofs build upon the existing
literature on univariate mean change point detection; see in partiucular,
\cite{venkatraman1992consistency} and \cite{fryzlewicz2014wild}, whose notation
will be used throughout. It is imprtnat however to note that we have made
several non-trivial modifications of those arguments, and have
made a special effort in keeping track of the changes in all the key parameters.
This careful treatment eventually allows us to achieve tight upper bounds for
the the localization rate implied by the WBSIP algorithm and which in turn
have revealed a phase transition in the problem parameters (see
\Cref{section:lb}). In particular, the results of this section can be used to
sharpen existing analyses of the BS and WBS algorithms.

\subsection{Results from  \cite{venkatraman1992consistency} }
We start by introducing some notation for one dimensional change point detection and the corresponding CUSUM statistics. Let 
 $\{Y_i\}_{i=1}^n, \{f_i\}_{i=1}^n \subset \mathbb R$ be two univariate sequences. We will make the following assumptions.
 
\begin{assumption}[Univariate mean change points]\label{assump-model 1d}
	 Let $\{\eta_k\}_{k=0}^{K+1} \subset \{0, \ldots, n\}$, where $\eta_0=0 $ and $\eta_{K+1}=n$, and 
	$$ f_{\eta_{k-1} +1} = f_{\eta_{k-1}+2} =\ldots = f_{\eta_{k}} \quad \text{for all} \quad 1\le k\le K+1,$$
	Assume
	\begin{align*}
		& \inf_{k = 1, \ldots, K+1} \{\eta_k-\eta_{k-1}\}\ge  \Delta=\Delta(n) > 0,\\
		& |f_{\eta_{k+1}} - f_{\eta_{k}} |:  = \kappa_{k} > 0, \, k = 1, \ldots, K,\\
		& \sup_{k = 1, \ldots, K+1} |f_{\eta_{k}} | <B_1. 
	\end{align*}
\end{assumption}

 We also have the corresponding CUSUM statistics over any
 generic interval $[s,e]\subset [1,T]$ defined as
	\begin{align*}
	\widetilde Y_{t}^{s,e} &=\sqrt{\frac{e-t}{(e-s) (t-s)}}\sum_{i=s+1}^{t}Y_i- \sqrt{\frac{t-s}{(e-s) (e-t)}} \sum_{i=t+1}^{e} Y_i ,\\
	\widetilde f_{t}^{s,e} &=\sqrt{\frac{e-t}{(e-s) (t-s)}}\sum_{i=s+1}^{t}f_i- \sqrt{\frac{t-s}{(e-s) (e-t)}} \sum_{i=t+1}^{e} f_i.
	\end{align*}
Throughout this \Cref{section:cusum 1d}, all of our results are proven by
regarding $\{Y_i\}_{i=1}^T$ and  $\{f_i\}_{i=1}^T$ as two deterministic sequences.
We will frequently assume that $\widetilde f_{t}^{s,e}$ is a good approximation
of $\widetilde Y_{t}^{s,e}$ in ways that we will specify through appropriate
 assumptions.

Observe that the function $\widetilde f_{t}^{s,e}$ is only well defined on
$[s,e]\cap \mathbb Z$.  Our first result,  which is taken from
\cite{venkatraman1992consistency}, shows that the there exists a continuous realization of the discrete function 
$\widetilde f_{t}^{s,e} $  
\begin{lemma}\label{lemma:continuation}
Suppose  $[s,e]\subset [1,T]$ satisfies 
 \begin{equation*}\eta_{r-1} \le s\le \eta_r \le \ldots\le \eta_{r+q} \le e \le \eta_{r+q+1}, \quad q\ge 0.
\end{equation*} 
Then there exists a continuous function $\widetilde F_{t}^{s,e} :[s,e]\to \mathbb R$
such that $\widetilde F_{r}^{s,e} = \widetilde f_{r}^{s,e}$ for every $r\in [s,e]\cap \mathbb Z$ with the following additional properties.
\\
\\
{\bf 1.} $|\widetilde F_t^{s,e}|$ is maximized at the change points within $[s,e]$.  In other words,
$$\arg\max_{s\le t\le e}  |\widetilde F_t^{s,e}| \cap \{ \eta_{r},\ldots, \eta_{r+q}\} \not = \emptyset.$$
{\bf 2.} If  $\widetilde F_t^{s,e} >0 $ for some $ t\in(s,e)$, then $\widetilde F_t^{s,e} $ is either monotonic or decreases and then increases
within each of the interval $[s,\eta_r], \ldots, [\eta_{r+q}, e]$. 
\end{lemma}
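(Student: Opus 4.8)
The plan is to realize the discrete CUSUM $\widetilde f_t^{s,e}$ as the restriction to the integers of an explicit continuous function, and then to reduce the analysis of its shape to two elementary facts about a product of an affine function with a fixed positive weight.

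First I would construct the continuation. Writing $g(t)=\sum_{i=s+1}^{\lfloor t\rfloor} f_i$ and extending it to all real $t\in[s,e]$ by linear interpolation between consecutive integers (which, since $f$ is piecewise constant, simply makes $g$ the integral of the corresponding step function), I define $\widetilde F_t^{s,e}$ by the formula in \Cref{def-1} with the two partial sums replaced by $g(t)$ and $g(e)-g(t)$. This agrees with $\widetilde f_t^{s,e}$ at every integer $t$, and a short rearrangement rewrites it as
\[
\widetilde F_t^{s,e}=w(t)\,h(t),\qquad w(t):=\sqrt{\tfrac{e-s}{(t-s)(e-t)}},\qquad h(t):=g(t)-\tfrac{t-s}{e-s}\,g(e)=\int_s^t\bigl(f(x)-\bar f\bigr)\,dx,
\]
where $\bar f=(e-s)^{-1}\int_s^e f$. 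Here $h$ is continuous and piecewise linear with kinks exactly at the change points in $(s,e)$, with $h(s)=h(e)=0$ and slope $f_{(\cdot)}-\bar f$ on each segment, while $w$ is strictly positive and convex on $(s,e)$, minimized at the midpoint, and blows up at the endpoints. Since $h$ vanishes linearly at $s$ and $e$ while $w$ grows only like $(t-s)^{-1/2}$, resp. $(e-t)^{-1/2}$, the product $\widetilde F_t^{s,e}=w(t)h(t)$ extends continuously to all of $[s,e]$ with value $0$ at both endpoints.

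The heart of the argument is the behavior of $\widetilde F^{s,e}$ on a single segment $[\eta_j,\eta_{j+1}]$, where $h(t)=\alpha+\beta t$ is affine. The key computation is that the stationarity condition $(\widetilde F_t^{s,e})'=w'(t)h(t)+\beta\,w(t)=0$, after multiplying through by $(t-s)(e-t)$, has its quadratic-in-$t$ terms cancel, leaving a linear equation; hence $\widetilde F^{s,e}$ has at most one stationary point in the interior of each segment. The second key fact is a curvature identity: writing $\phi(t)=(t-s)(e-t)$ and using $w\,w''-2(w')^2=w^2\bigl(\phi^{-1}+\tfrac14(\phi'/\phi)^2\bigr)>0$, one checks that at any interior stationary point $t_0$ one has $\mathrm{sign}\bigl((\widetilde F_{t_0}^{s,e})''\bigr)=\mathrm{sign}\bigl(h(t_0)\bigr)=\mathrm{sign}\bigl(\widetilde F_{t_0}^{s,e}\bigr)$. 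Thus on each segment the unique possible stationary point is a local minimum wherever $\widetilde F^{s,e}>0$ and a local maximum wherever $\widetilde F^{s,e}<0$; in particular $|\widetilde F^{s,e}|$ has no interior local maximum on a segment and is there either monotone or decreases and then increases.

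Finally I would assemble the two properties. Property~2 follows on each segment on which $\widetilde F^{s,e}$ keeps a constant sign directly from the previous paragraph (a valley when, as under the hypothesis, that sign is positive), while a sign change inside a segment is handled by splitting at the unique zero of the affine $h$, where $\widetilde F^{s,e}=0$ is a minimum of $|\widetilde F^{s,e}|$. Property~1 then follows because $\widetilde F^{s,e}$ is continuous on $[s,e]$, vanishes at $s$ and $e$, and has no interior local maximum of its absolute value strictly inside any segment; hence the maximum of $|\widetilde F^{s,e}|$ over $[s,e]$ is attained at a segment endpoint, i.e. at a change point. I expect the main obstacle to be the bookkeeping at the kinks: at a change point the one-sided derivatives of $\widetilde F^{s,e}$ differ, so ruling out spurious interior extrema and correctly propagating the sign information across segments (in order to phrase the monotonicity in the signed form of Property~2) requires tracking the jump in the slope of $h$ together with the sign of $w'$ at that point. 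This is precisely the delicate step in \cite{venkatraman1992consistency}, and it is where the bulk of the care is needed.
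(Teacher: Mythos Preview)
Your proposal is correct and reconstructs exactly the argument of \cite{venkatraman1992consistency} (Lemmas~2.2--2.3), which the paper cites in lieu of its own proof: the factorization $\widetilde F_t^{s,e}=w(t)h(t)$ with $h$ piecewise affine, the cancellation of the quadratic term in the stationarity equation so that each segment carries at most one critical point, and the curvature identity $w\,w''-2(w')^2>0$ forcing that critical point to be a local minimum of $|\widetilde F^{s,e}|$ are precisely Venkatraman's steps. Your closing caveat is well placed --- on a segment where $\widetilde F^{s,e}<0$ throughout, your own analysis yields a hill rather than a valley, which reflects a slight imprecision in the paper's phrasing of Property~2 rather than a gap in your argument (downstream, e.g.\ in the proof of \Cref{prop:1d bs}, only the valley shape at points where $\widetilde F^{s,e}>0$ is ever invoked).
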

The proof of this lemma can be found in Lemmas 2.2 and 2.3 of  \cite{venkatraman1992consistency}.
We remark that if $\widetilde F_t^{s,e}\le 0 $ for all $t \in (s,e)$, then it suffices to consider the time series $\{ -f_i\}_{i=1}^T$
and a similar result as in the second part of   \Cref{lemma:continuation} still holds.
Throughout the entire section, we always view $\widetilde f^{s,e}_t $ as a continuous function and frequently invoke 
\Cref{lemma:continuation} as a basic property of the CUSUM statistics without further notice.

Our next lemma is an adaptation of a result first obtained by
\cite{venkatraman1992consistency}, which quantifies how 
fast the CUSUM statistics decays around a good change point. An analogous
result, derive using different arguments, 
can be found in Proposition 21 in \cite{wang2016high}.

\begin{lemma}[\cite{venkatraman1992consistency} Lemma 2.6]
\label{lemma:Venkatraman}
Let $[s,e]\subset [1,T]$ be any generic interval.
For some  $c_1,c_2 >0$ and $\lambda>0$ such that
\begin{align}
&\min \{\eta_k-s , e-\eta_k\} \ge c_1\Delta, \label{eq:ven 0}
\\
&\tf_{\eta_k}\ge c_2\kappa  \Delta (e-s)^{-1/2} ,    \label{eq:ven 1}
\end{align}
suppose there exists sufficiently small $c_3 > 0$ such that 
\begin{align}
\max_{s\le t\le e} |\tf_t | - \tf_{\eta_k} \le 2\lambda  \le  c_3 \kappa \Delta^3 (e-s)^{-5/2}\label{eq:ven 2}
\end{align}
 Then there exists an absolute constant $c>0$ such that if the point $d\in [s,e]$ is such that $|d-\eta_k| \le  c_1\Delta/16 $, then
\[
\widetilde f_{\eta_k}^{s,e}   -\widetilde f_{d}^{s,e}   > c \widetilde f^{s,e}_{\eta_k} |\eta_k-d | \Delta(e-s)^{-2} 
\]
\end{lemma}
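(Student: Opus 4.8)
The plan is to work throughout with the continuous realization $\widetilde F_t^{s,e}$ of the discrete CUSUM $\tf_t$ furnished by \Cref{lemma:continuation}, and to reduce the estimate to a one-sided, single-piece computation. First I would invoke \Cref{lemma:continuation} to assume without loss of generality that $\widetilde F_t^{s,e}>0$ on $(s,e)$ (otherwise replace $f$ by $-f$), and I would treat only the case $d\le \eta_k$, the case $d\ge\eta_k$ being symmetric. Since $|d-\eta_k|\le c_1\Delta/16<\Delta\le \eta_k-\eta_{k-1}$, the whole segment $[d,\eta_k]$ lies inside the change-point-free interval $(\eta_{k-1},\eta_k]$, where $f$ is constant and hence the partial sums $\sum_{i=s+1}^t f_i$ are affine in $t$. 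Consequently, on $[d,\eta_k]$ we may write
\[
\widetilde F_t^{s,e}=\frac{L(t)}{\sqrt{(t-s)(e-t)}},
\]
with $L$ a fixed affine function, so that $\widetilde F_t^{s,e}$ is smooth there with
\[
\frac{d}{dt}\widetilde F_t^{s,e}=\frac{2L'(t-s)(e-t)-L(t)\bigl[(e-t)-(t-s)\bigr]}{2\bigl[(t-s)(e-t)\bigr]^{3/2}}.
\]

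Next I would establish that $\widetilde F_t^{s,e}$ is monotonically increasing on $[d,\eta_k]$. By the second part of \Cref{lemma:continuation}, on $(\eta_{k-1},\eta_k]$ the function is either monotone or first decreases and then increases, so monotonicity on $[d,\eta_k]$ can fail only if the bottom of a ``valley'' lies inside $[d,\eta_k]$. This is where \eqref{eq:ven 2} enters: because $\eta_k$ is within $2\lambda$ of $\max_{s\le t\le e}|\tf_t|$ and $2\lambda\le c_3\kappa\Delta^3(e-s)^{-5/2}$ is far smaller than the scale on which $\widetilde F_t^{s,e}$ varies over a window of length $\Delta$, any such valley bottom must sit at distance much larger than $c_1\Delta/16$ from $\eta_k$, hence strictly to the left of $d$; thus $\widetilde F_t^{s,e}$ increases throughout $[d,\eta_k]$. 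The same derivative formula, with the denominators bounded below via \eqref{eq:ven 0}, shows that the multiplicative variation of $\widetilde F_t^{s,e}$ over the short segment $[d,\eta_k]$ is controlled, yielding the comparability $\widetilde F_d^{s,e}\ge \tfrac12\widetilde F_{\eta_k}^{s,e}$.

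Finally I would prove a pointwise lower bound on the derivative and integrate. Using \eqref{eq:ven 1} to control $L(\eta_k)=\widetilde F_{\eta_k}^{s,e}\sqrt{(\eta_k-s)(e-\eta_k)}$ from below, the near-maximality to pin down the sign of $L'$ and prevent it from being too negative, and \eqref{eq:ven 0} to bound $(t-s)(e-t)\le (e-s)^2/4$, I expect to obtain on $[d,\eta_k]$ a bound of the form
\[
\frac{d}{dt}\widetilde F_t^{s,e}\ \ge\ c'\,\widetilde F_t^{s,e}\,\Delta\,(e-s)^{-2}.
\]
Combining this with the comparability $\widetilde F_t^{s,e}\ge\widetilde F_d^{s,e}\ge\tfrac12\widetilde F_{\eta_k}^{s,e}$ and integrating gives
\[
\widetilde F_{\eta_k}^{s,e}-\widetilde F_d^{s,e}=\int_d^{\eta_k}\frac{d}{dt}\widetilde F_t^{s,e}\,dt\ \ge\ c\,\widetilde F_{\eta_k}^{s,e}\,|\eta_k-d|\,\Delta\,(e-s)^{-2},
\]
which is the assertion.

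The main obstacle is the derivative lower bound in the last step for a general, multi-change signal. Unlike the single-change-point case, where one has the clean identity $\widetilde F'/\widetilde F=(e-s)/\{2(t-s)(e-t)\}$, the slope $L'$ of the affine numerator is not directly tied to the jump $\kappa_k$ and could in principle be small or of either sign. Controlling $L'$ is precisely where the near-maximality hypothesis \eqref{eq:ven 2}, together with the exact smallness $2\lambda\le c_3\kappa\Delta^3(e-s)^{-5/2}$, must be exploited, and it is this step that requires the careful, non-trivial adaptation of the argument of \cite{venkatraman1992consistency}.
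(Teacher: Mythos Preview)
Your derivative-and-integrate strategy is a genuinely different route from the paper's, but the gap you flag in the last paragraph is real and is not closed by anything in the proposal. Two concrete issues:

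\textbf{(i) The monotonicity step is circular.} You argue that the valley bottom cannot lie in $[d,\eta_k]$ because $2\lambda$ is ``far smaller than the scale on which $\widetilde F_t^{s,e}$ varies over a window of length $\Delta$''. But that scale of variation is exactly the content of the lemma you are trying to prove. Without an independent lower bound on $\tf_{\eta_k}-\tf_t$ for $t$ near $\eta_k$, you cannot exclude a shallow valley sitting inside $[d,\eta_k]$.

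\textbf{(ii) The derivative lower bound is not established.} Writing $\widetilde F_t^{s,e}=L(t)/\sqrt{(t-s)(e-t)(e-s)}$ on the constant piece gives $L'(t)=(e-s)f_{\eta_k}-\sum_{i=s+1}^e f_i$, a global quantity that is not controlled by $\kappa_k$ or by $\tf_{\eta_k}$ alone. Your claim that near-maximality ``pins down the sign of $L'$'' is exactly the missing step, and it does not follow from \eqref{eq:ven 2} without further work.

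The paper avoids both issues by a direct finite-difference computation in the spirit of Venkatraman. It takes $d\ge\eta_k$ and distinguishes two cases according to whether $\eta_{k+1}>e$ (Case~1) or $\eta_{k+1}\le e$ (Case~2). In Case~1 there is an explicit closed form for $\tf_{\eta_k}-\tf_d$ that gives the bound immediately. In Case~2 the key device is an auxiliary point $\eta_k+h$ with $h=c_1\Delta/8$, which still lies in the same constant piece $(\eta_k,\eta_{k+1}]$ as $d$. One obtains an algebraic decomposition
\[
\tf_{\eta_k}-\tf_d \ \ge\ E_{1l}(1+E_{2l})+E_{3l},
\]
where $E_{1l}$ already carries the desired order $c\,\tf_{\eta_k}\,|d-\eta_k|\,\Delta(e-s)^{-2}$, $E_{2l}\ge -1/2$ by elementary inequalities using \eqref{eq:ven 0}, and the only place the near-maximality \eqref{eq:ven 2} is used is to bound
\[
E_{3l}\ \propto\ -\bigl(\tf_{\eta_k+h}-\tf_{\eta_k}\bigr)\frac{|d-\eta_k|}{h}\ \ge\ -\frac{2\lambda\,|d-\eta_k|}{h},
\]
which is then absorbed by $E_{1l}$ precisely because $2\lambda\le c_3\kappa\Delta^3(e-s)^{-5/2}$. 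This auxiliary-point trick is the substitute for the control of $L'$ that you are missing: evaluating the CUSUM at the second point $\eta_k+h$ in the same affine piece, together with the bound $|\tf_{\eta_k+h}|\le\max_t|\tf_t|\le\tf_{\eta_k}+2\lambda$, is exactly what pins down the slope of $L$. If you want to salvage your derivative approach, this is the ingredient to import.
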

\begin{remark}
If $\widetilde f^{s,e}_{\eta_k} < 0,  $ and  $d\in [s,e]$ is such that $|d-\eta_k| \le  c_1/16 $, then by considering the sequence 
$\{-f_i\}_{i=1}^n$, it holds that
$$
(-\widetilde f_{\eta_k}^{s,e})   -(-\widetilde f_{d}^{s,e} )  > c (-\widetilde f^{s,e}_{\eta_k} )|\eta_k-d | \Delta(e-s)^{-2} 
$$
\end{remark}

\begin{proof} 
 Without loss of generality, assume that $d\ge \eta_k$. 
Following the argument of  \cite{venkatraman1992consistency} Lemma~2.6, it
suffices to consider two cases: (1) $\eta_{k+1} > e$, and (2) $\eta_{k+1} \le e$.
\
\\
\\
 \textbf{Case 1.} Let $E_l$ be defined as in the   case 1 in
 \cite{venkatraman1992consistency} Lemma~2.6. There exists a $c>0$ such that,
for every $d\in [\eta_{k},\eta_k+ c_1\Delta/16]$, $\widetilde f_{\eta_k}^{s,e}
-\widetilde f_{d}^{s,e}$ (which in the notation of
\cite{venkatraman1992consistency} is the term $E_l$) can be
written as 
\[
	\tf_{\eta_k} | d-\eta_k|\frac{e-s }{\sqrt {e-\eta_k} \sqrt {\eta_k-s  +(d-\eta_k)}  \left( \sqrt { (\eta_k-s  +(d-\eta_k) ) (e-\eta_k)   } 
	+     \sqrt { (\eta_k -s ) (e-\eta_k -(d-\eta_k) )   }  \right) }.
    \]
Using the inequality $(e-s)\ge 2c_1\Delta $, the previous expression is lower
bounded by
\[
	 c'|d-\eta_k| \tf_{\eta_k}\Delta (e-s)^{-2}.
    \]
\\
\textbf{Case 2.} Let
 $h =c_1\Delta/8 $  and  $l=d-\eta_k \le h/2$. Then, following closely the initial calculations for case 2 of Lemma 2.6 of
\cite{venkatraman1992consistency}, we obtain that
$$
	\widetilde f_{\eta_k}^{s,e}   -\widetilde f_{d}^{s,e}  \ge E_{1l} (1+E_{2l}) +E_{3l},
$$
	where
	\begin{align*}
	E_{1l}&=\frac{ \tf_{\eta_k} l(h-l) } {\sqrt{(\eta_k-s+l )(e-\eta_k -l)   }   \left( \sqrt{ (\eta_k-s+l )(e-\eta_k -l)} +\sqrt{ (\eta_k-s)(e-\eta_k)} \right)}, \\
        E_{2l}&= \frac{((e-\eta_k-h )- (\eta_k-s))((e-\eta_k-h )- (\eta_k-s) -l)}{\left( \sqrt{ (\eta_k-s+l )(e-\eta_k -l)} + \sqrt{ (\eta_k-s+h)(e-\eta_k-h)} \right) } \\
        \quad & \times \frac{1}{ \left(\sqrt{ (\eta_k- s)(e-\eta_k)} +\sqrt{ (\eta_k-s+h ) (e-\eta_k-h)} \right) },\\
	\text{and} &\\
	E_{3l} &= -\frac{(\tf_{\eta_k+h}  -\tf_{\eta_k} )l }{h}  \sqrt{\frac{ (\eta_k -s+h)(e-\eta_k-h) }{  (\eta_k-s+l )(e-\eta_k-l) }}.
	\end{align*}
Since  $h= c''\Delta $ and $l\le h/2$,
	\begin{align*}
	E_{1l} \ge c''\tf_{\eta_k} | d-\eta| \Delta (e-s)^{-2}.
	\end{align*}
Observe that 
\begin{align}
\eta_k-s\le  \eta_k-s +l \le \eta_k-s +h \le 9(\eta_k-s)/8, \quad 
	e-\eta_k \ge e-\eta_k-l \ge e-\eta_k -h \ge 7(e-\eta_k)/8.\label{eq:l and h}
	\end{align}
	Thus
	\begin{align*}
	&E_{2l } 
	\\
	=&\frac{((e-\eta_k-h )- (\eta_k-s))^2 + l(h+ \eta_k-s) -l (e-\eta_k) }{\left( \sqrt{ (\eta_k-s+l )(e-\eta_k -l)} + \sqrt{ (\eta_k-s+h)(e-\eta_k-h)} \right)}\\
	\quad &  \times  \frac{1}{\left(\sqrt{ (\eta_k- s)(e-\eta_k)} +\sqrt{ (\eta_k-s+h ) (e-\eta_k-h)} \right) }
	\\
	\ge& \frac{- l(e-\eta_k) }{ (\eta_k-s+h)(e-\eta_k-h) }
	\\
        \ge& \frac{- l(e-\eta_k) }{ (\eta_k-s)(7/8)(e-\eta_k) } \ge -1/2
	\end{align*}
	where \eqref{eq:l and h} is used in the second inequality and the fact
	that $l\le h/2 \le c_1\Delta/16 \le (\eta_k-s )/16 $ is used in the last inequality.
For $E_{3l}$, observe that 
$$
	\tf_{\eta_k+h}  -\tf_{\eta_k} \le  | \tf_{\eta_k+h} | -\tf_{\eta_k} \le \max_{s\le t\le e} |   \tf_{t}  |-\tf_{\eta_k} \le 2\lambda
$$
	and that 
	 \eqref{eq:ven 0} and $l/2 \le h= c_1\Delta/8$ imply that 
	$$\eta_k-s\le  \eta_k-s +l \le \eta_k-s +h \le 9(\eta_k-s)/8 \quad
	\text{and} \quad 
	e-\eta_k \ge e-\eta_k-l \ge e-\eta_k -h \ge 7(e-\eta_k)/8.$$
	 Therefore,
	\begin{align*}
	E_{3l} &\ge - \frac{ 2(d-\eta_k) \lambda}{c_1\Delta/8}  \sqrt { \frac{(9/8)(\eta_k-s) (e- \eta_k) }{ (\eta_k-s)(7/8) (e-\eta_k )} } 
	\\
	&\ge  - \frac{ 32(d-\eta_k) \lambda}{c_1\Delta} 
	\\
	&\ge -(c''/4) \tf_{\eta_k}   (d-\eta_k) \Delta (e-s)^{-2} ,
	\end{align*}
	where the first inequality follows from \eqref{eq:l and h} and the last
	inequality follows from \eqref{eq:ven 1} and \eqref{eq:ven 2}, for a sufficiently small $c_3$.
Thus,
	$$\widetilde f_{\eta_k}^{s,e}   -\widetilde f_{d}^{s,e}  \ge E_{1l} (1+E_{2l}) +E_{3l} \ge (c''/4) \tf_{\eta_k}   | \eta_k-d| \Delta (e-s)^{-2}.$$
\end{proof}

The following proposition is a direct consequence of  \Cref{lemma:Venkatraman}
and essentially characterizes the localization
error rate of the BS algorithm.

\begin{proposition}\label{prop:1d bs}
Consider any generic interval  $(s, e) \subset (0, T)$ such that 
\begin{equation}\label{eq:location of b}
\eta_{r-1} \le s\le \eta_r \le \ldots\le \eta_{r+q} \le e \le \eta_{r+q+1}, \quad q\ge 0.
\end{equation}
Let $b \in \arg \max_{s\le t\le e}|\widetilde Y_{t}^{s,e}  | $.
Suppose   for some $c_1>0$ and $\kappa>0$, 
\begin{align}
	\max \{ \min \{ \eta_r-s ,s-\eta_{r-1}  \}, \min \{ \eta_{r+q+1}-e, e-\eta_{r+q}\} \} = \epsilon_n, \nonumber
\end{align}
where
\begin{align}
\epsilon_n  < \min \{  (3c_1/8)^2\kappa^2 \Delta^2 (e-s)^{-1}    B_1^{-2} , \Delta/4 \}, \label{prop:1d bs 4}
\end{align}
and
\begin{align}
|\widetilde Y_{b}^{s,e}  |  \ge c_1\kappa \Delta (e-s)^{-1/2}.  \label{prop:1d bs 2}
\end{align}
 Assume also that there exists sufficient small $c_3>0$ such that
 \begin{align}
&\sup_{s\le t\le e}  |\widetilde f^{s,e}_t - \widetilde Y_t^{s,e} | = \lambda_1 <
\min\{  c_3\kappa\Delta^{3}(e-s)^{-5/2}, \quad (c_1/4)\kappa  \Delta (e-s)^{-1/2}\}.
\label{prop:1d bs 1}
\end{align}
Then there exists a change point $\eta_{k} \in [s,e] $ and an absolute constant
$C_1 > 0$ such that 
\begin{align}
&\min \{e-\eta_k,\eta_k-s\}  > (3c_1/8)^2\kappa^2 \Delta^2(e-s)^{-1} B_1^{-2} \label{eq:1d re1} \\
 & |\eta_{k} -b |\le C_1\lambda_1 (e-s)^{5/2}\Delta^{-2} \kappa^{-1} , \nonumber \\
 \text{and} & \nonumber \\
 &|\tf_{\eta_k} |  \ge  |\widetilde Y_{b}^{s,e}  | -\lambda_1 \ge \max_{s\le t \le e}  |\tf_t| -2\lambda_1.
 \label{eq:1d re3}
  \end{align}
\end{proposition}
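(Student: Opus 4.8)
The plan is to reduce everything to the one-dimensional decay estimate of \Cref{lemma:Venkatraman}, applied at a carefully chosen change point $\eta_k$ near $b$. First I would fix signs: replacing $\{Y_i\}$ and $\{f_i\}$ by their negatives if necessary, I may assume $\widetilde Y_b^{s,e} > 0$, so that by \eqref{prop:1d bs 2} $\widetilde Y_b^{s,e} \ge c_1 \kappa \Delta (e-s)^{-1/2}$. Invoking \Cref{lemma:continuation}, the continuous realization of $t \mapsto \widetilde f_t^{s,e}$ is, on each of the pieces $[s,\eta_r],\ldots,[\eta_{r+q},e]$, either monotone or first decreasing then increasing, and it vanishes at $s$ and $e$. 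Since $\widetilde f_b^{s,e} \ge \widetilde Y_b^{s,e} - \lambda_1 > 0$ is large by \eqref{prop:1d bs 1}, the point $b$ lies in a piece on which $\widetilde f^{s,e}$ is monotone toward the nearest peak; I take $\eta_k$ to be the endpoint of that piece toward which $\widetilde f^{s,e}$ increases. This $\eta_k$ is forced to be a genuine change point (not $s$ or $e$, where the CUSUM is $0$), satisfies $\widetilde f_{\eta_k}^{s,e} \ge \widetilde f_b^{s,e}$, and $\widetilde f^{s,e}$ decreases monotonically from $\eta_k$ toward $b$.

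Next I would record three elementary consequences. Combining $\widetilde f_{\eta_k}^{s,e} \ge \widetilde f_b^{s,e} \ge \widetilde Y_b^{s,e} - \lambda_1$ with the lower bound on $\widetilde Y_b^{s,e}$ and $\lambda_1 < (c_1/4)\kappa\Delta(e-s)^{-1/2}$ yields $\widetilde f_{\eta_k}^{s,e} \ge (3c_1/4)\kappa\Delta(e-s)^{-1/2}$, which is \eqref{eq:ven 1}. The estimate \eqref{eq:1d re3} follows by the same manipulations together with $\max_{t}|\widetilde Y_t^{s,e}| \ge \max_t |\widetilde f_t^{s,e}| - \lambda_1$. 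For \eqref{eq:1d re1} I would use the boundary bound $|\widetilde f_{\eta_k}^{s,e}| \le 2\sqrt{d}\,B_1$ with $d = \min\{\eta_k - s, e - \eta_k\}$, which follows directly from \Cref{def-1} and $|f_i| < B_1$; comparing it with the lower bound on $\widetilde f_{\eta_k}^{s,e}$ gives $d > (3c_1/8)^2\kappa^2\Delta^2(e-s)^{-1}B_1^{-2}$, i.e. \eqref{eq:1d re1}. Crucially, since by \eqref{prop:1d bs 4} this quantity exceeds $\epsilon_n$, the dichotomy implied by \eqref{eq:location of b} (every change point is within $\epsilon_n$ of an endpoint or at least $3\Delta/4$ from both) upgrades this to $\min\{\eta_k - s, e - \eta_k\} \ge 3\Delta/4$, that is \eqref{eq:ven 0} with constant $3/4$.

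It then remains to verify the last hypothesis \eqref{eq:ven 2} and to apply \Cref{lemma:Venkatraman}. Since $\widetilde f_b^{s,e} \ge \max_t \widetilde f_t^{s,e} - 2\lambda_1$ and $\widetilde f_{\eta_k}^{s,e} \ge \widetilde f_b^{s,e}$, we obtain $\max_t|\widetilde f_t^{s,e}| - \widetilde f_{\eta_k}^{s,e} = O(\lambda_1) \le c_3\kappa\Delta^3(e-s)^{-5/2}$ after shrinking $c_3$, giving \eqref{eq:ven 2}. To bound $|b - \eta_k|$ I would first rule out $b$ being far from $\eta_k$: if $|b-\eta_k| > 3\Delta/64$, apply the decay bound at the intermediate point $d^\ast$ with $|d^\ast - \eta_k| = 3\Delta/64$ to get $\widetilde f_{\eta_k}^{s,e} - \widetilde f_{d^\ast}^{s,e} > 2\lambda_1$ (using the lower bound on $\widetilde f_{\eta_k}^{s,e}$ and small $c_3$), whence $\widetilde f_{d^\ast}^{s,e} < \widetilde f_{\eta_k}^{s,e} - 2\lambda_1 \le \widetilde f_b^{s,e}$, contradicting the monotone decrease of $\widetilde f^{s,e}$ from $\eta_k$ toward $b$. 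Hence $|b-\eta_k| \le 3\Delta/64$ and \Cref{lemma:Venkatraman} applies directly at $d=b$, giving $\widetilde f_{\eta_k}^{s,e} - \widetilde f_b^{s,e} > c\,\widetilde f_{\eta_k}^{s,e}|b-\eta_k|\Delta(e-s)^{-2}$; combined with $\widetilde f_{\eta_k}^{s,e} - \widetilde f_b^{s,e} \le 2\lambda_1$ and the lower bound on $\widetilde f_{\eta_k}^{s,e}$, this yields $|b - \eta_k| \le C_1\lambda_1(e-s)^{5/2}\Delta^{-2}\kappa^{-1}$.

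The main obstacle will be the shape analysis together with the ``no distant maximizer'' step: selecting the correct change point $\eta_k$ from the monotone/valley structure of \Cref{lemma:continuation} while tracking the sign of the CUSUM, and excluding the possibility that $b$ lies outside the neighborhood on which the Venkatraman decay estimate is valid. The device that makes this manageable is the upgrade of the weak bound \eqref{eq:1d re1} to $\min\{\eta_k - s, e-\eta_k\}\ge 3\Delta/4$ via \eqref{prop:1d bs 4}, since it supplies the $\Theta(\Delta)$-sized neighborhood of validity required both for the contradiction argument and for the final quantitative bound.
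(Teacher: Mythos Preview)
Your proposal is correct and follows essentially the same approach as the paper: select the neighboring change point $\eta_k$ via the monotone/valley structure of \Cref{lemma:continuation}, deduce the boundary bound \eqref{eq:1d re1}, upgrade it to $\min\{\eta_k-s,e-\eta_k\}\ge 3\Delta/4$ via the dichotomy from \eqref{prop:1d bs 4}, and then invoke \Cref{lemma:Venkatraman}. The only cosmetic difference is in the last step: the paper applies \Cref{lemma:Venkatraman} at a point $d$ placed at distance $C_1\lambda_1(e-s)^{5/2}\Delta^{-2}\kappa^{-1}$ from $\eta_k$ and uses monotonicity to force $b$ between $\eta_k$ and $d$, whereas you first cap $|b-\eta_k|\le 3\Delta/64$ by the same contradiction device and then apply \Cref{lemma:Venkatraman} directly at $d=b$; both routes yield the same bound.
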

\begin{proof}
Observe that from \eqref{prop:1d bs 1},
\begin{equation}\label{eq:bound of b}
\max_{s < t < e} |\widetilde f_t^{s,e}|\le \max_{s < t < e} |\widetilde Y_{t}^{s,e} |+\lambda_1 \le| \widetilde Y_{b}^{s,e}| +\lambda_1 \le | \widetilde f_{b}^{s,e}| +2\lambda_1.
\end{equation}

Suppose $\eta_{k}\le b\le \eta_{k+1}$  for some $r-1 \le k\le  r+q $.
Observe that 
$$ |  \widetilde f_{b}^{s,e}| \ge  |  \widetilde Y_{b}^{s,e}|  -\lambda_1>  (3c_1/4)\kappa \Delta (e-s)^{-1/2} >0,$$
where the second inequality follows from \eqref{prop:1d bs 2} and \eqref{prop:1d bs 1}.
It suffices to consider the case in which  $ \widetilde f_{b}^{s,e}>0$, since,
if $ \widetilde f_{b}^{s,e}<0$, then the same arguments can be applied to the time series $\{ -f_i \}_{i=1}^n$. 
From \Cref{lemma:continuation}, $\widetilde f_t^{s,e}$ is either monotonic
 or decreasing and then increasing on $[\eta_{k},\eta_{k+1}] $. Thus
$$ \max \{\widetilde f_{\eta_{k}}^{s,e} , \widetilde f_{\eta_{k+1}}^{s,e} \}  \ge \widetilde f_{b}^{s,e}.$$
If $\widetilde f_{t}^{s,e}$ is locally decreasing at $b$, then 
$\widetilde f_{\eta_{k}}^{s,e} \ge \widetilde f_{b}^{s,e}.$
Therefore 
\begin{equation}\label{eq:decrease assumption}
\tf_{\eta_k} \ge  \widetilde f_{b}^{s,e}   >(3c_1/4)\kappa \Delta(e-s)^{-1/2}.
\end{equation}
\\
{\bf Step 1.} We first show that \eqref{eq:decrease assumption} implies
\eqref{eq:1d re1}.  For the sake of contradiction, suppose that 
$\min \{ e-\eta_k ,\eta_k-s\} \le ( (3c_1/8)\kappa \Delta(e-s)^{-1/2} B_1^{-1})^2$. 
Then
\begin{align*} \widetilde f_{\eta_k}^{s,e}   
&\le  \sqrt {\frac{(e-\eta_k) (\eta_k-s)}{e-s}  } B_1 + \sqrt {\frac{(e-\eta_k) (\eta_k-s)}{e-s}  }  B_1 \\
&\le 2 \sqrt  { \min \{ e-\eta_k ,\eta_k-s\} } B_1  \le (3c_1/4)\kappa \Delta(e-s)^{-1/2}.
\end{align*}
  This is a contradiction to \eqref{eq:decrease assumption}. Therefore  \eqref{eq:1d re1} holds for $\eta_k $.
\vskip 3mm

\noindent  {\bf Step 2.}  We now apply \Cref{lemma:Venkatraman}, since
\eqref{eq:ven 1} and \eqref{eq:ven 2} hold in virtue of \eqref{eq:decrease
assumption} and \eqref{prop:1d bs 1}, respectively.
Thus, we will need to prove that 
\[
\min \{\eta_k-s,e-\eta_k\} \ge (3/4)\Delta. 
\]
For the sake of contradiction, assume that $\min \{\eta_k-s,e-\eta_k\} < (3/4)\Delta$.
Suppose $\eta_k-s<(3/4)\Delta$. Since $\eta_{k}-\eta_{k-1}\ge\Delta $, one has $s-\eta_{k-1}\ge \Delta /4. $  This also means that 
$\eta_k$ is the first change point within $[s,e]$. Therefore $k=r$ in \eqref{eq:location of b}.
By \eqref{prop:1d bs 4}, $\min \{ \eta_r-s ,s-\eta_{r-1}  \}\le \epsilon<\Delta/4. $ Since 
$s-\eta_{k-1} =s-\eta_{p-1} \ge \Delta/4 $,   it must be the case that
$$\eta_k -s=\eta_r-s \le \epsilon \le  (3c_1/8)^2\kappa^2 \Delta^2 (e-s)^{-1}    B_1^{-2}.$$
This is a contradiction to  \eqref{eq:1d re1}. 
Therefore $\eta_k-s\ge(3/4)\Delta$.
The argument  of $e-\eta_k\ge(3/4)\Delta$ can be made analogously. 
\vskip 3mm

\noindent {\bf Step 3.} By \Cref{lemma:Venkatraman}, if there exists  a $d$ and a sufficiently large constant $C_1 > 0$ satisfying
$$d\in[\eta_{k},\eta_{k} + C_1\lambda_1 (e-s)^{5/2}\Delta^{-2} \kappa^{-1} ],$$
then
$$\widetilde f_{\eta_k}^{s,e}   -\widetilde f_{d}^{s,e}   > c \widetilde
f^{s,e}_{\eta_k} |\eta_k-d | \Delta(e-s)^{-2} \ge \lambda_1,$$
where the last inequality follows from
\eqref{eq:decrease assumption} and \eqref{prop:1d bs 1}.
\\
For the sake of contradiction, suppose $b\ge d $. Then 
$$ \widetilde f_{b}^{s,e} \le  \widetilde f_{d}^{s,e}  <  \widetilde f_{\eta_{k}}^{s,e} - \lambda_1  \le \max_{s < t < e}|\widetilde f_{t}^{s,e} | -2\lambda_1,$$
where the first inequality follows from \Cref{lemma:continuation} which ensures that  $\widetilde f_{t}^{s,e}$ is decreasing on $[\eta_{p},b] $ and $d\in [\eta_{p},b]$.
This is 
a contradiction to \eqref{eq:bound of b}. 
Thus $ b\in [\eta_{k},\eta_{k} + C_1\lambda_1 (e-s)^{5/2}\Delta^{-2} \kappa^{-1} ]$. 
\\
\eqref{eq:1d re3} follows from \eqref{eq:bound of b} and \eqref{eq:decrease assumption}.
\\
The argument for the case when $\widetilde f_b^{s,e}$ is locally increasing at
$b$ is similar and therefore we omit the details.

\end{proof}

\begin{corollary} 
\label{coro:1d bs}Let $\delta>0$ be such that $2\sqrt {\delta} B_1\le  (3c_1/4)\kappa \Delta (e-s)^{-1/2} $.
Let $b'=\arg \max_{\lceil s+ \delta       \rceil    \le t \le  \lfloor e- \delta
\rfloor}|\widetilde Y_{t}^{s,e}  | $.
Suppose all the assumption in \Cref{prop:1d bs} hold except that  \eqref{prop:1d bs 2} and
\eqref{prop:1d bs 1} are replaced by 
 \begin{align}
&|\widetilde Y_{b'}^{s,e}  |  \ge c_1\kappa \Delta (e-s)^{-1/2}  
\label{coro:1d bs 2}
\\
\text{and} & \nonumber \\
&\sup_{\lceil s+ \delta       \rceil, \ldots, \lfloor e- \delta    \rfloor}  |\widetilde f^{s,e}_t - \widetilde Y_t^{s,e} | = \lambda_1 <
\min\{  c_3\kappa\Delta^{3}(e-s)^{-5/2}, \quad (c_1/4)\kappa  \Delta (e-s)^{-1/2}\},
\label{coro:1d bs 1}
\end{align}
respectively.
Then, all the conclusions of \Cref{prop:1d bs} still hold for $b'$.
\end{corollary}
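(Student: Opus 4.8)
The plan is to show that the trimmed maximizer $b'$ can be substituted for the global maximizer $b$ throughout the proof of \Cref{prop:1d bs}, so that the three conclusions follow by repeating that argument almost verbatim. The only place where the original proof genuinely exploits both the global nature of $b$ and the closeness of $\widetilde Y^{s,e}_t$ to $\widetilde f^{s,e}_t$ on \emph{all} of $[s,e]$ is the chain \eqref{eq:bound of b}, which establishes $\max_{s<t<e}|\widetilde f^{s,e}_t|\le |\widetilde f^{s,e}_{b}|+2\lambda_1$. Hence the crux is to recover this inequality for $b'$ while only assuming the approximation bound \eqref{coro:1d bs 1} on the trimmed range $[\lceil s+\delta\rceil,\lfloor e-\delta\rfloor]$.

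First I would split the supremum over $[s,e]$ into the trimmed core and the two boundary strips of width $\delta$. On the core, the closeness \eqref{coro:1d bs 1} together with the maximizing property of $b'$ gives, exactly as before, $|\widetilde f^{s,e}_t|\le |\widetilde Y^{s,e}_t|+\lambda_1\le |\widetilde Y^{s,e}_{b'}|+\lambda_1\le |\widetilde f^{s,e}_{b'}|+2\lambda_1$. On the boundary strips I would use only the uniform bound $|f_i|\le B_1$: for any $t$ with $t-s\le\delta$ one has
\[
|\widetilde f^{s,e}_t|\le 2\sqrt{\tfrac{(t-s)(e-t)}{e-s}}\,B_1\le 2\sqrt{\delta}\,B_1,
\]
and symmetrically when $e-t\le\delta$. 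Combining the additional hypothesis $2\sqrt{\delta}B_1\le (3c_1/4)\kappa\Delta(e-s)^{-1/2}$ with the lower bound $|\widetilde f^{s,e}_{b'}|\ge |\widetilde Y^{s,e}_{b'}|-\lambda_1\ge (3c_1/4)\kappa\Delta(e-s)^{-1/2}$, which follows from \eqref{coro:1d bs 2} and \eqref{coro:1d bs 1}, shows that the boundary values never exceed $|\widetilde f^{s,e}_{b'}|$. The two cases together yield the desired analog of \eqref{eq:bound of b}.

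With that inequality in hand the remainder is mechanical. The lower bound $|\widetilde f^{s,e}_{b'}|>(3c_1/4)\kappa\Delta(e-s)^{-1/2}>0$ lets me locate $b'$ inside some $[\eta_k,\eta_{k+1}]$ and invoke \Cref{lemma:continuation} to obtain $\widetilde f^{s,e}_{\eta_k}\ge \widetilde f^{s,e}_{b'}$, after reducing to the case $\widetilde f^{s,e}_{b'}>0$ by passing to $\{-f_i\}$ if needed; this reproduces \eqref{eq:decrease assumption}. Steps~1 and~2 of the proof of \Cref{prop:1d bs} — the contradiction arguments yielding \eqref{eq:1d re1} and $\min\{\eta_k-s,e-\eta_k\}\ge(3/4)\Delta$ — use only \eqref{prop:1d bs 4}, the boundedness by $B_1$, and \eqref{eq:decrease assumption}, all unchanged; moreover \eqref{eq:ven 2} follows from the new version of \eqref{eq:bound of b}, since then $\max_{s\le t\le e}|\widetilde f^{s,e}_t|-\widetilde f^{s,e}_{\eta_k}\le 2\lambda_1$. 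Finally Step~3 applies \Cref{lemma:Venkatraman} on $[\eta_k,b']$ to derive $|\eta_k-b'|\le C_1\lambda_1(e-s)^{5/2}\Delta^{-2}\kappa^{-1}$ together with \eqref{eq:1d re3}.

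I expect the main obstacle to be precisely the recovery of \eqref{eq:bound of b}: one must verify that trimming the search range does not discard a larger maximum hidden within $\delta$ of an endpoint, and the sole purpose of the extra hypothesis $2\sqrt{\delta}B_1\le (3c_1/4)\kappa\Delta(e-s)^{-1/2}$ is to dominate the boundary CUSUM values and rule this out. Once this bookkeeping is carried out, no idea beyond the proof of \Cref{prop:1d bs} is required.
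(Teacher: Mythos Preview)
Your proposal is correct and follows the same core idea as the paper: use the boundary bound $|\widetilde f^{s,e}_t|\le 2\sqrt{\delta}\,B_1\le (3c_1/4)\kappa\Delta(e-s)^{-1/2}$ to show that trimming the search range to $[\lceil s+\delta\rceil,\lfloor e-\delta\rfloor]$ is harmless, after which the proof of \Cref{prop:1d bs} carries over verbatim.

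The only difference is in presentation. The paper's proof simply asserts that $\eta_k,b\in[\lceil s+\delta\rceil,\lfloor e-\delta\rfloor]$ and hence $b=b'$, appealing to \eqref{eq:decrease assumption}. That argument is a bit glib: to invoke \eqref{eq:decrease assumption} for the \emph{global} maximizer $b$ one would need the approximation $|\widetilde f^{s,e}_b-\widetilde Y^{s,e}_b|\le\lambda_1$ at $b$, which is only assumed on the trimmed range, so the claim $b=b'$ is not directly available. Your route---re-deriving the analog of \eqref{eq:bound of b} for $b'$ by splitting the supremum of $|\widetilde f^{s,e}_t|$ into the core (handled via \eqref{coro:1d bs 1} and the maximality of $b'$) and the boundary strips (handled by the $2\sqrt{\delta}B_1$ bound)---is the cleaner way to make the argument rigorous, and it is exactly what the paper's proof is implicitly relying on.
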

\begin{proof}
For any $t\in [s+\delta] \cup [e-\delta] $,
$$ |\widetilde f^{s,e}_t | \le 2 \sqrt {\min \{e-t, t-s\}} B_1\le 2\sqrt {\delta} B_1\le  (3c_1/4)\kappa \Delta (e-s)^{-1/2}.$$
Let $\eta_k$ and $b$ defined as in the proof of \Cref{prop:1d bs}. Then by \eqref{eq:decrease assumption}
$\eta_k,b\in \lceil s+ \delta       \rceil, \ldots, \lfloor e- \delta    \rfloor$. 
Therefore $b=b'$ and all the conclusions of  \Cref{prop:1d bs} still hold.

\end{proof}

\subsection{Results from  \cite{fryzlewicz2014wild} }
\label{subsection:fryzlewicz}
Below, we will derive some further properties of the CUSUM statistic
using the ANOVA decomposition-type of arguments first introduced by
\cite{fryzlewicz2014wild}, which are particularly effective at separating the noise from the signal. Some of the results below are contained in
\cite{fryzlewicz2014wild}, but others requires different, subtle
arguments. For completeness, we include all the proofs.  

For a pair $(s,e)$ of positive integers with $s < e$, let  $\mathcal{W}_d^{s,e}$ be the two dimensional linear subspace of
$\mathbb{R}^{e-s}$ spanned by the vectors 
	\[
	u_1 = (\underbrace{1, \ldots, 1}_{d-s}, \underbrace{0, \ldots,
	0}_{e-d})^{\top} \quad \text \quad 
	u_2 = (\underbrace{0, \ldots, 0}_{d-s}, \underbrace{1, \ldots,
	1}_{e-d})^{\top}.
	\]
For clarity, we will use $\langle \ , \ \rangle $ to denote the inner product of two vectors in the Euclidean space.

\begin{lemma}\label{lemma:WBS projection}  For $x = (x_{s+1}, \ldots, x_e)^{\top} \in \mathbb{R}^{e-s}$, let $\p^{s,e}_d(x)$ be the projection of $x$ onto $\mathcal{W}^{s,e}_d$.
\begin{enumerate}
	\item The projection $\mathcal{P}^{s,e}_d(x)$ satisfies
	\[
	\p^{s,e}_d (x) = \frac{1}{e-s}\sum_{i=s+1}^e x_i+\langle x,\psi^{s,e}_d\rangle \psi^{s,e}_d,
	\]
	where $\langle \cdot, \cdot \rangle$ is the inner product in Euclidean space, and $\psi^{s,e}_d = ( (\psi^{s,e}_d)_s, \ldots, (\psi^{s,e}_d)_{e-s})^{\top}$ with
		$$
		(\psi^{s,e}_d)_i= 
		\begin{cases}
			\sqrt\frac{e-d}{(e-s)(d-s)}, &  i = s+1, \ldots, d, \\
			-\sqrt\frac{d-s}{(e-s)(e-d)}, &  i = d+1, \ldots, e,
		\end{cases}
		$$
	i.e. the $i$-th entry of $\p^{s,e}_d (x)$ satisfies 
	$$
	\p^{s,e}_d (x)_i= \begin{cases}
		\frac{1}{d-s}\sum_{j=s+1}^d x_j, &  i = s+1, \ldots, d, \\
	\frac{1}{e-d}\sum_{j=d+1}^e x_j, & i = d+1, \ldots, e.
	\end{cases}  
	$$
	
	\item Let $\bar x =\frac{1}{e-s}\sum_{i=s+1}^e x_i$. Since $\langle \bar x,\psi^{s,e}_d \rangle=0$,
	\begin{equation} \label{eq:anova}
		\| x - \p^{s,e}_d(x) \|^2 = \| x-\bar x \|^2 - \langle x, \psi^{s,e}_d\rangle ^2.
	\end{equation}
\end{enumerate}
\end{lemma}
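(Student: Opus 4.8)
The plan is to treat $\p^{s,e}_d$ as the orthogonal projection onto the two-dimensional subspace $\mathcal W^{s,e}_d = \mathrm{span}\{u_1,u_2\}$ and to exploit the freedom of choosing a convenient orthonormal basis. Two bases are natural here: the \emph{indicator basis} $\{u_1,u_2\}$, which has disjoint supports and will yield the piecewise-mean (``i.e.'') form of the projection, and the \emph{grand-mean/contrast basis} $\{\mathbf 1,\psi^{s,e}_d\}$, where $\mathbf 1 = u_1+u_2$ is the all-ones vector, which will yield the first displayed identity and expose the ANOVA-type structure underlying part~2.

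First I would normalize the indicator basis. Since $u_1$ and $u_2$ have disjoint supports they are orthogonal, with $\|u_1\|^2 = d-s$ and $\|u_2\|^2 = e-d$, so $\{u_1/\sqrt{d-s},\, u_2/\sqrt{e-d}\}$ is orthonormal and
\[
\p^{s,e}_d(x) = \frac{\langle x,u_1\rangle}{d-s}\,u_1 + \frac{\langle x,u_2\rangle}{e-d}\,u_2 .
\]
Reading off coordinates gives exactly the piecewise-mean form, since $\langle x,u_1\rangle = \sum_{j=s+1}^d x_j$ and $\langle x,u_2\rangle = \sum_{j=d+1}^e x_j$. To obtain the first displayed expression I would then verify that $\{\mathbf 1/\sqrt{e-s},\, \psi^{s,e}_d\}$ is also an orthonormal basis of $\mathcal W^{s,e}_d$: the vector $\psi^{s,e}_d$ is constant on $\{s+1,\dots,d\}$ and on $\{d+1,\dots,e\}$ and hence lies in $\mathrm{span}\{u_1,u_2\}$; a short computation shows $\langle \psi^{s,e}_d,\mathbf 1\rangle = \sqrt{(d-s)(e-d)/(e-s)} - \sqrt{(e-d)(d-s)/(e-s)} = 0$ and $\|\psi^{s,e}_d\|^2 = (e-d)/(e-s) + (d-s)/(e-s) = 1$. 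Expanding $\p^{s,e}_d(x)$ in this basis and using $\langle x,\mathbf 1\rangle/(e-s) = \bar x$ (with the scalar $\bar x$ identified with the constant vector $\bar x\,\mathbf 1$, as in the statement) produces the claimed formula.

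For part~2 I would argue from the orthogonal decomposition $x - \p^{s,e}_d(x) = (x - \bar x\,\mathbf 1) - \langle x,\psi^{s,e}_d\rangle\,\psi^{s,e}_d$. Since $\langle \mathbf 1,\psi^{s,e}_d\rangle = 0$ one has $\langle x - \bar x\,\mathbf 1,\,\psi^{s,e}_d\rangle = \langle x,\psi^{s,e}_d\rangle$, so expanding the squared norm and using $\|\psi^{s,e}_d\| = 1$ collapses the cross term and gives $\|x-\p^{s,e}_d(x)\|^2 = \|x-\bar x\,\mathbf 1\|^2 - \langle x,\psi^{s,e}_d\rangle^2$, which is the asserted identity. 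The only care needed anywhere is the bookkeeping in checking that $\psi^{s,e}_d$ is a unit contrast vector orthogonal to $\mathbf 1$; once that is in place everything reduces to the standard formula for projection onto an orthonormal basis, so there is no substantive obstacle. It is worth remarking that $\langle x,\psi^{s,e}_d\rangle$ is precisely the univariate CUSUM statistic $\widetilde Y^{s,e}_d$ of $x$, which is why \eqref{eq:anova} is the engine behind the ANOVA-style separation of signal and noise used in the sequel.
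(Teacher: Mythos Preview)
Your proof is correct and follows essentially the same approach as the paper: the paper simply writes down the block-constant projection matrix onto $\mathcal W^{s,e}_d$ (equivalent to your use of the orthonormal indicator basis $\{u_1/\sqrt{d-s},\,u_2/\sqrt{e-d}\}$) and declares that all claims follow. Your argument is more explicit in carrying out the change of basis to $\{\mathbf 1/\sqrt{e-s},\,\psi^{s,e}_d\}$ and in deriving the ANOVA identity, but there is no substantive difference in method.
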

 
\begin{proof}
The results hold following the fact that the projection matrix of subspace $\mathcal{W}^{s,e}_d$ is
\[
P^{s,e}_{\mathcal{W}^{s,e}_d} = \left(
\begin{array}{cccccc}
	1/(d-s) & \cdots & 1/(d-s)& 0 & \cdots & 0 \\
	\vdots & \vdots & \vdots & \vdots & \vdots & \vdots \\
	1/(d-s) & \cdots & 1/(d-s) & 0 & \cdots & 0 \\
	0 & \cdots & 0 & 1/(e-d) & \cdots & 1/(e-d) \\
	\vdots & \vdots & \vdots & \vdots & \vdots & \vdots \\
	0 & \cdots & 0 & 1/(e-d)& \cdots & 1/(e-d)
\end{array}
\right).
\]
\end{proof}
For any pair $d_1, d_2 \in \{s+1, \ldots, e\}$ and $f\in \mathbb{R}^{e-s}$, the following two statements are equivalent:
\[
\langle f , \psi^{s,e}_{d_1}\rangle^2 \le   \langle  f , \psi_{d_2}^{s,e}\rangle^2  \iff  \| f -\p_{d_1}^{s,e} (f )\| ^2\ge     \| f  -\p^{s,e}_{d_2}(f )\| ^2.
\]

\begin{lemma}\label{lemma:wbs 1d}
Assume \Cref{assume:model}.  Let  $[s_0,e_0]$ be an interval with $e_0-s_0\le
C_R\Delta$ and contain at lest one change point $\eta_r$ such that 
\[
\eta_{r-1} \le s_0\le \eta_r \le \ldots\le \eta_{r+q} \le e_0 \le \eta_{r+q+1},
\quad q\ge 0.
\]
 Suppose  that $\min\{ \eta_{p'} -s_0 , e_0 -\eta_{p'} \}\ge \Delta /16$ for
 some $p'$ and let $\kse= \max\{\kappa_p: \min\{ \eta_p -s_0 , e_0 -\eta_p \} \ge \Delta /16\}$.   Consider any generic $[s,e] \subset [s_0,e_0]$, satisfying
\[
\min\{ \eta_{r} -s_0 , e_0 -\eta_{r} \} \ge \Delta /16 \quad \text{for all } \eta_r \in[s,e].
\]

Let $b \in \arg \max_{s < t < e}|\widetilde Y_{t}^{s,e}  | $.
For some $c_1>0$, $\lambda>0$ and $\delta>0$, suppose that
\begin{align}
&|\widetilde Y_{b}^{s,e}  |  \ge c_1 \kse \sqrt{\Delta},  \label{eq:wbs size of sample} \\
&\sup_{s < t < e} |\widetilde Y_{t}^{s,e}   - \widetilde f_{t}^{s,e} | \le \lambda, \nonumber \\
\text{and} & \nonumber \\
&\sup_{s_1 < t < e_1} \frac{1}{\sqrt{e_1-s_1}}\left| \sum_{t=s_1+1}^{e_1} ( Y_t
-f_t )\right| \le  \lambda  \label{eq:wbs noise 2}  \quad \text{for every} \quad
e_1-s_1 \ge \delta. 
\end{align}
If there exists a sufficiently small $c_2 > 0$ such that
\begin{equation}\label{eq:wbs noise}
\lambda\le c_2\kse\sqrt \Delta \quad \text{and} \quad  \delta \le c_2\Delta,
\end{equation}
then there exists a change point $\eta_{k} \in (s, e)$  such that 
\[
    \min \{e-\eta_k,\eta_k-s\}  >  \Delta /4  \quad \text{and} \quad 
|\eta_{k} -b |\le \min \{C_3\lambda^2\kappa_k^{-2},\delta\}.
\]
\end{lemma}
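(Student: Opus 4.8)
The plan is to transfer the problem from the observed CUSUM curve $t\mapsto\tY_t$ to the population curve $t\mapsto\tf_t$, to locate a change point $\eta_k$ at which $\tf$ peaks, and then to extract the sharp $\kappa_k^{-2}$ rate by measuring the noise through the \emph{local} partial-sum bound \eqref{eq:wbs noise 2} rather than through the global fluctuation bound. Write $\varepsilon_i=Y_i-f_i$; since the CUSUM transform is linear, $\tY_t-\tf_t$ is exactly the CUSUM of $\{\varepsilon_i\}$. First I would pass to the signal: from \eqref{eq:wbs size of sample}, the bound $\sup_{s<t<e}|\tY_t-\tf_t|\le\lambda$ and the smallness \eqref{eq:wbs noise}, the maximizer obeys $\tf_b\ge|\tY_b|-\lambda\ge(c_1/2)\kse\sqrt\Delta>0$ (taking $\tf_b>0$ without loss of generality) and $\tf_b\ge\max_{s\le t\le e}|\tf_t|-2\lambda$, so $b$ nearly maximizes $|\tf|$. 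Invoking the continuation result \Cref{lemma:continuation}, $\tf$ is monotone or valley-shaped on each inter-change-point block, so there is a change point $\eta_k$ at the ``uphill'' end of the block containing $b$, with $\tf_{\eta_k}\ge\tf_b$ and with $b$ lying in the portion on which $\tf$ is nonincreasing away from $\eta_k$. That $\eta_k$ is $\Delta/4$-interior to $[s,e]$, and that $|b-\eta_k|$ is already small enough to enter the local regime of \Cref{lemma:Venkatraman}, follows exactly as in Steps~1--2 and~3 of \Cref{prop:1d bs} (equivalently, via a first application of \Cref{coro:1d bs}), using the lower bound on $\tf_{\eta_k}$, the $\Delta$-spacing of the change points and the interiority hypotheses on $[s_0,e_0]$. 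Since $\eta_k$ is $\Delta/16$-interior to $[s_0,e_0]$, it is one of the change points defining $\kse$, so $\kappa_k\le\kse$ and hence $\tf_{\eta_k}\ge(c_1/2)\kse\sqrt\Delta\ge c'\kappa_k\sqrt\Delta$; equivalently $\tf_{\eta_k}^2\gtrsim\kappa_k^2\Delta$, which is all that the jump $\kappa_k$ is needed for.

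Next I would quantify the decay of the population curve around $\eta_k$. Because $\eta_k$ is $\Delta/4$-interior and $\tf_{\eta_k}\gtrsim\kappa_k\sqrt\Delta$, the hypotheses \eqref{eq:ven 0}--\eqref{eq:ven 2} of \Cref{lemma:Venkatraman} hold (the smallness condition \eqref{eq:wbs noise} supplies \eqref{eq:ven 2}), so
\[
\tf_{\eta_k}-\tf_b\;>\;c\,\tf_{\eta_k}\,|b-\eta_k|\,\Delta\,(e-s)^{-2}\;\gtrsim\;\tf_{\eta_k}\,|b-\eta_k|\,\Delta^{-1},
\]
where the last step uses $e-s\le C_R\Delta$.

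The crux of the argument, and the source of the gain over \Cref{prop:1d bs}, is to bound the relevant noise sharply. Because $b$ maximizes $|\tY_t|$, $\eta_k\in(s,e)$ lies in the search range, and $\tY_b\ge\tf_b-\lambda>0$, we have $\tY_b=|\tY_b|\ge|\tY_{\eta_k}|\ge\tY_{\eta_k}$; subtracting $\tf$ gives the one-sided comparison
\[
0\;\le\;\tf_{\eta_k}-\tf_b\;\le\;(\tY_b-\tf_b)-(\tY_{\eta_k}-\tf_{\eta_k})\;=\;\bigl\langle\varepsilon,\ \psi^{s,e}_b-\psi^{s,e}_{\eta_k}\bigr\rangle,
\]
the last identity being \Cref{lemma:WBS projection}. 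The decisive point is that $\psi^{s,e}_b-\psi^{s,e}_{\eta_k}$ is concentrated: expressing the CUSUM through the partial sums $S_t=\sum_{i=s+1}^t\varepsilon_i$, one finds a leading coefficient $\asymp\Delta^{-1/2}$ acting on the \emph{localized} sum $S_b-S_{\eta_k}=\sum_{i=\eta_k+1}^b\varepsilon_i$, plus two smooth tails whose coefficients differ by only $O(|b-\eta_k|\,\Delta^{-3/2})$ and which act on $S_{\eta_k}$ and $S_e$. Applying \eqref{eq:wbs noise 2} block by block---to $(\eta_k,b]$, to $(s,\eta_k]$ and to $(s,e]$, which is exactly where the requirement that the bound hold on every subinterval of length at least $\delta$ is used, and where the floor $\delta$ enters---gives $|S_b-S_{\eta_k}|\le\lambda|b-\eta_k|^{1/2}$ and $|S_{\eta_k}|,|S_e|\lesssim\lambda\Delta^{1/2}$, so the tails are lower-order and
\[
\tf_{\eta_k}-\tf_b\;\le\;\bigl|\bigl\langle\varepsilon,\ \psi^{s,e}_b-\psi^{s,e}_{\eta_k}\bigr\rangle\bigr|\;\lesssim\;\lambda\,|b-\eta_k|^{1/2}\,\Delta^{-1/2}.
\]
Combining with the decay bound yields $\tf_{\eta_k}|b-\eta_k|\Delta^{-1}\lesssim\lambda|b-\eta_k|^{1/2}\Delta^{-1/2}$, hence $\tf_{\eta_k}|b-\eta_k|^{1/2}\lesssim\lambda\Delta^{1/2}$, and finally, using $\tf_{\eta_k}^2\gtrsim\kappa_k^2\Delta$,
\[
|b-\eta_k|\;\lesssim\;\frac{\lambda^2\Delta}{\tf_{\eta_k}^2}\;\lesssim\;\frac{\lambda^2}{\kappa_k^2},
\]
the claimed localization bound (the step $|b-\eta_k|\ge\delta$, needed to invoke \eqref{eq:wbs noise 2} on $(\eta_k,b]$, accounts for the $\delta$ term).

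The main obstacle is precisely this sharp estimate of the noise increment $\langle\varepsilon,\psi^{s,e}_b-\psi^{s,e}_{\eta_k}\rangle$. The cruder route of \Cref{prop:1d bs}, which controls the noise only through $\sup_t|\tY_t-\tf_t|\le\lambda$ and thus demands that the \emph{signal} gap $\tf_{\eta_k}-\tf_b$ exceed the full $\lambda$, produces the inferior rate $\lambda\Delta^{1/2}\kappa^{-1}$; the extra factor $\lambda(\kappa\sqrt\Delta)^{-1}\le c_2$ is recovered only by recognizing that $\psi^{s,e}_b-\psi^{s,e}_{\eta_k}$ loads almost entirely on the short block $(\eta_k,b]$ and then using the interval-uniform bound \eqref{eq:wbs noise 2}. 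The delicate parts of the calculation are verifying that the two tail terms are genuinely lower-order and that the constants in \eqref{eq:wbs noise} are small enough to absorb them, together with the bookkeeping---carried out as in \Cref{prop:1d bs}---that guarantees $\eta_k$ is $\Delta/4$-interior and that $b$ falls in the nonincreasing portion of $\tf$ to which \Cref{lemma:Venkatraman} applies.
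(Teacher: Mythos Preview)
Your approach is correct and reaches the same localization rate, but it diverges from the paper's in the decisive noise step. The paper follows the ANOVA/projection route of \cite{fryzlewicz2014wild}: it compares the \emph{squared} CUSUM values via $\|Y-\mathcal P_b(Y)\|^2\le\|Y-\mathcal P_{\eta_k}(f)\|^2$, expands to isolate the cross term $2\langle\varepsilon,\mathcal P_b(Y)-\mathcal P_{\eta_k}(f)\rangle$, and then splits $\langle\varepsilon,\mathcal P_b(f)-\mathcal P_{\eta_k}(f)\rangle$ into three pieces $I+II+III$ over $(s,\eta_k]$, $(\eta_k,b]$, $(b,e]$. Because $\mathcal P_d(f)$ is a block average of $f$, these differences involve the signal and are bounded through \Cref{lemma:number of changes} in units of $\kse$; the dominant piece is $|II|\le C\sqrt{b-\eta_k}\,\lambda\,\kse$, balanced against the signal side $(\tf_{\eta_k})^2-(\tf_b)^2\ge c|b-\eta_k|(\kse)^2$. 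Your route is more elementary: from the \emph{linear} inequality $\tY_b\ge\tY_{\eta_k}$ you extract $\tf_{\eta_k}-\tf_b\le\langle\varepsilon,\psi_b-\psi_{\eta_k}\rangle$, where the vector $\psi_b-\psi_{\eta_k}$ is purely numerical and concentrates on the short block $(\eta_k,b]$; no appeal to \Cref{lemma:number of changes} is needed, and the $\kse$ factors cancel automatically because both the signal gap and the noise lose one power of $\tf_{\eta_k}$ relative to the quadratic route. Both arguments hinge on the same two ingredients---\Cref{lemma:Venkatraman} for the signal decay and the interval-uniform partial-sum bound \eqref{eq:wbs noise 2} for the local noise---and yield the identical rate $\lambda^2(\kse)^{-2}\le\lambda^2\kappa_k^{-2}$. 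One minor correction: the interiority $\min\{\eta_k-s,e-\eta_k\}\ge c\Delta$ is not obtained ``exactly as in Steps~1--2 of \Cref{prop:1d bs}'', since those steps rely on the hypothesis \eqref{prop:1d bs 4}, which is absent here; the paper argues this separately via \Cref{lemma:one change point basics} and \Cref{lemma:cusum boundary bound}, and you should do the same.
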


\begin{proof}Without loss of generality, assume that $\widetilde f_{b}^{s,e}>0$ and that $\widetilde f_t^{s,e} $ is locally decreasing at $b$.
Observe that there has to be a change point $\eta_k \in [s,b]$, or otherwise $\widetilde f_b^{s,e} >0 $  implies that  $\widetilde f_t^{s,e} $ is decreasing,
as a consequence of  \Cref{lemma:cusum boundary bound}.

Thus if $s\le \eta_k\le b \le  e $, then 
\begin{align}\widetilde f_{\eta_k}^{s,e}\ge \widetilde f_{b}^{s,e} \ge |\widetilde Y^{s,e}_b  | -\lambda  \ge c_1 \kse \sqrt{\Delta} -c_2 \kse \sqrt \Delta 
\ge  (c_1/2) \kse \sqrt {\Delta}. \label{eq:wbs size of change point}
\end{align}
Observe that $e-s\le e_0-s_0\le C_R\Delta $ and that $(s, e)$ has to contain at least one change point or otherwise $ |\widetilde f^{s,e}_{\eta_k} |  =0 $ which contradicts  \eqref{eq:wbs size of change point}.
\vskip 3mm
\noindent {\bf Step 1.} 
In this step, we are to show that $\min\{ \eta_k -s , e -\eta_k \} \ge \min\{1, c_1^2 \}\Delta /16$. 

Suppose $\eta_k$ is the only change point in $(s, e)$. 
So $\min\{ \eta_k -s , e -\eta_k \} \ge   \min\{1, c_1^2\}\Delta /16$ must hold or otherwise it follows from \Cref{lemma:one change point basics}, we have
	\[
	    |\widetilde f^{s,e}_{\eta_k} | < \frac{c_1 }{4} \kappa_k
	    \sqrt{\Delta} \le \frac{c_1}{2}  \kse \sqrt{\Delta},
	\]
	which contradicts \eqref{eq:wbs size of change point}.

Suppose $(s, e)$ contains at least two change points. Then $ \eta_k -s \le   \min\{1,c_1^2  \}\Delta /16 $ implies that
$\eta_k$ is the first change point in $[s,e]$. 
Therefore 
\begin{align*}
 |\widetilde f^{s,e}_{\eta_k}| \le \frac{1}{4}  |  \widetilde f^{s,e}_{\eta_{k+1}}| +2\kappa_r  \sqrt {\eta_r -s} 
 \le \frac{1}{4}\max_{s < t < e}|  \widetilde f^{s,e}_{t}|+\frac{c_1}{2}\kappa_r  \sqrt {\Delta}
 \\
 \le \frac{1}{4}|  \widetilde Y^{s,e}_{b}| +\lambda +\frac{c_1}{2}\kse  \sqrt {\Delta}  \le\frac{3}{4}|  \widetilde Y^{s,e}_{b}| +\lambda< |  \widetilde Y^{s,e}_{b}|  -\lambda
 \end{align*}
where the first inequality follows from \Cref{lemma:cusum boundary bound}, the fourth inequality follows from \eqref{eq:wbs size of sample}, and the last inequality holds when $c_2$ is sufficiently small.  This contradicts \eqref{eq:wbs size of change point}.

\vskip 3mm
\noindent{\bf Step 2.} By \Cref{lemma:Venkatraman} there exists $d$ such that  
$$d\in[\eta_{k},\eta_{k} + \lambda \sqrt \Delta (\kse)^{-1} ]$$
and that 
 $
  \widetilde f_{\eta_{k}}^{s,e}   -\widetilde f_{d}^{s,e}   > 2\lambda.
$
For the sake of contradiction, suppose $b\ge d $. Then 
$$ \widetilde f_{b}^{s,e} \le  \widetilde f_{d}^{s,e}  <  \widetilde f_{\eta_{k}}^{s,e} -2\lambda  \le \max_{s < t < e}|\widetilde f_{t}^{s,e} | -2\lambda
\le \max_{s < t < e} |\widetilde Y^{s,e}_t| +\lambda-2 \lambda  = |\widetilde Y^{s,e}_b| -\lambda ,$$
where the first inequality follows from \Cref{lemma:continuation}, which ensures that  $\widetilde f_{t}^{s,e}$ is decreasing on $[\eta_{p},b] $ and $d\in [\eta_{p},b]$.
This is 
a contradiction to \eqref{eq:wbs size of change point}. 
Thus $ b\in [\eta_{k},\eta_{k} + \lambda \sqrt \Delta (\kse)^{-1} ]$. 

\vskip 3mm
\noindent {\bf Step 3.}
 Let $f^{s,e} =(f_{s+1},\ldots, f_e)^{\top} \in \mathbb{R}^{(e-s)} $ and $Y^{s,e}=(Y_{s+1}, \ldots, Y_e)^{\top} \in \mathbb{R}^{(e-s)}$. 
  By the definition of $b$, it holds that
	$$
	\bigl\|Y^{s,e} - \mathcal{P}^{s,e}_{b}(Y^{s,e})\bigr\|^2 \leq \bigl \|Y^{s,e} - \mathcal{P}^{s,e}_{\eta_k}(Y^{s,e})\bigr\|^2 
	\leq \bigl\|Y^{s,e} - \mathcal{P}_{\eta_k}^{s,e}(f^{s,e})\bigr\|^2.
	$$
For the sake of contradiction, throughout the rest of this argument suppose
that, for some sufficiently large constant $C_3 > 0$ to be specified,
\begin{align}\label{eq:wbs contradict assume}
\eta_k + \max\{C_3\lambda^2\kappa_k^{-2},\delta\}< b . \end{align}
(This will of course imply that $\eta_k + \max\{C_3\lambda^2
    (\kse)^{-2},\delta\}< b$).
	We will show that this leads to the bound
	\begin{align}\label{eq:WBS sufficient}
	\bigl\|Y^{s,e} - \mathcal{P}_{b}^{s,e} (Y^{s,e})\bigr\|^2 >
	\bigl\|Y^{s,e} - \mathcal{P}^{s,e}_{\eta_k}(f^{s,e})\bigr\|^2,	
    \end{align}
which is a contradiction. 
	To derive \eqref{eq:WBS sufficient} from \eqref{eq:wbs contradict assume}, we note that  $\min\{ e-\eta_k,\eta_k-s\}\ge  \min\{1, c_1^2 \}\Delta/16$ and that 
	$| b- \eta_k| \le  \lambda \sqrt \Delta (\kse)^{-1}$ implies that 
\begin{align}
\label{eq:wbs size of intervals}
 \min\{ e-b, b-s\} \ge   \min\{1, c_1^2 \}\Delta /16 -\lambda \sqrt \Delta (\kse)^{-1} \ge \min\{1, c_1^2 \}\Delta /32 ,
 \end{align}
where the last inequality follows from \eqref{eq:wbs noise} and holds for an
appropriately small $c_2>0$.

\Cref{eq:WBS sufficient} is in turn implied by 
\begin{equation}
\label{eq:WBS sufficient 2}
2\langle \varepsilon^{s,e} ,\p_b(Y^{s,e}) - \p _{\eta_k}(f^{(s,e)})\rangle < \|f^{s,e}-\p_b(f^{s,e}) \|^2 -\|f^{s,e}-\p_{\eta_k}(f^{s,e}) \|^2,
 \end{equation}
 where $\varepsilon^{s,e}= Y^{s,e}-f^{s,e}$.
By \eqref{eq:anova}, the right hand side of \eqref{eq:WBS sufficient 2}
satisfied the relationships 
\begin{align*}
  \|f^{s,e}-\p_b(f^{s,e}) \|^2 -\|f^{s,e}-\p_{\eta_k}(f^{s,e}) \|^2
& =
 \langle f^{s,e} , \psi_{\eta_k} \rangle^2 -\langle f^{s,e} , \psi_{b} \rangle^2  \\
&=  (\tf_{\eta_p})^2 -(\tf_{b})^2\\
& \ge ( \tf_{\eta_k} - \tf_{b} ) | \tf_{\eta_k}|\\
& \ge  c |d-\eta_k|  (\tf_{\eta_k})^2 \Delta^{-1}\\
& \ge c' |d-\eta_k |(\kse)^2,
\end{align*}
where \Cref{lemma:Venkatraman} and 
\eqref{eq:wbs size of change point} are used in the second  and third
inequalities.
The left hand side of \eqref{eq:WBS sufficient 2}
can  in turn be rewritten as  
\begin{equation}
\label{eq:perturbations}
2\langle \varepsilon^{s,e} ,\p_b(X^{s,e}) - \p _{\eta_k}(f^{s,e})\rangle = 2 \langle \varepsilon^{s,e}, \p_b(X^{s,e})-\p_b(f^{s,e})\rangle + 
2  \langle \varepsilon^{s,e}, \p_b(f^{s,e})-\p_{\eta_k} (f^{s,e})\rangle .
 \end{equation}
The second term on the right hand side  of the previous display can be
decomposed as
\begin{align*}
\langle \varepsilon^{s,e}  , \p_b (f ^{s,e})-\p_{\eta_k} (f^{s,e} )\rangle &  =  \left( \sum_{i=s+1}^{\eta_k} +\sum_{i={\eta_k}+1}^b +\sum_{i=b+1}^e\right) 
 \varepsilon^{s,e}_i \left( \p_b(f^{s,e})_i  - \p_{\eta_k}(f^{s,e})_i  \right)\\
 &= I +II +III.
\end{align*}
In order to bound the terms $I$, $II$ and $III$, observe that,
 since $e-s\le e_0-s_0\le  C_R\Delta$, the interval $[s,e]$ must contain at most $C_R+1$ change points.
Let 
$$ \eta_{r'-1}< s\le \eta_{r'} \le \ldots  \le \eta_{p'+q'}< e\le \eta_{p'+q'+1}. $$
Then $p'+q'+1-r'\le C_R +1$. 
\vskip 3mm
\noindent {\bf Step 4.}
We can write 
\begin{align*} I
=& \sqrt{{\eta_k} -s}\left (\frac{1}{\sqrt{{\eta_k} -s}} \sum_{i=s+1}^{\eta_k} \varepsilon^{s,e}_i\right)
 \left( \frac{1}{b-s} \sum_{i=s+1}^b f_i-\frac{1}{{\eta_k}-s} \sum_{i=s+1}^{\eta_k} f_i\right).  \\
  \end{align*}
  Thus,
 \begin{align*} 
& \left |\frac{1}{b-s} \sum_{i=s+1}^b f_i  -\frac{1}{{\eta_k}-s} \sum_{i=s+1}^{\eta_k} f_i \right | = \left|  \frac{ (\eta_k -s ) (\sum_{i=s+1}^{\eta_k}  f_i +\sum_{i=\eta_k+1}^{b}  f_i)  - (b-s) \sum_{i=s+1}^{\eta_k} f_i }{(b-s)(\eta_k -s)}   \right|\\
=&
  \left|  \frac{ (\eta_k -b ) \sum_{i=s+1}^{\eta_k}  f_i + (\eta_k-s)\sum_{i=\eta_k+1}^{b}  f_i)  }{(b-s)(\eta_k -s)}   \right| =   \left|  \frac{ (\eta_k -b ) \sum_{i=s+1}^{\eta_k}  f_i + (\eta_k-s) (b-\eta_k) f_{\eta_k+1})  }{(b-s)(\eta_k -s)}   \right|\\
   =&
  \frac{b-\eta_k}{b-s}\left|  - \frac{1}{\eta_k -s } \sum_{i=s+1}^{\eta_k}  f_i+ f_{\eta_{k+1}}  \right| \le \frac{b-\eta_k}{b-s}  (C_R +1 )\kse
   \end{align*}
where \Cref{lemma:number of changes} is used in the last inequality.  It follows from \Cref{eq:wbs noise 2} that
\begin{align*}
| I|\le  \sqrt{\eta_k-s}\lambda  \frac{|b-\eta_k|}{b-s}(C_R+1) \kse \le \frac{4\sqrt{2}}{\min\{1, c_1\}}|b-\eta_k | \Delta^{-1/2} \lambda  (C_R+1) \kse,
\end{align*}
where \eqref{eq:wbs size of intervals} is used in the last inequality.
\vskip 3mm
\noindent {\bf Step 5.}
For the second term $II$, we have that 
\begin{align*} |II|
=&\left| \sqrt{{b-\eta_k} }\left (\frac{1}{\sqrt{{b-\eta_k}}} \sum_{i={\eta_k+1}}^d \varepsilon^{s,e}_i\right)
 \left (\frac{1}{b-s} \sum_{i=s+1}^b f_i-\frac{1}{e-{\eta_k}} \sum_{i={\eta_{k}+1 } }^e f_i\right)  \right|\\
\le&\sqrt {b-{\eta_k}}    \lambda \left( \left| f_{\eta_k} -f_{\eta_{k+1} }\right| + 
\left|\frac{1}{b-s} \sum_{i=s+1 }^b f_i- f_{\eta_k}  \right| + \left |\frac{1}{e-{\eta_k}} \sum_{i={\eta_k+1}}^e f_i - f_{\eta_{k+1} } \right| \right)\\
\le & \sqrt {b-{\eta_k}} (  \kse + (C_R +1)\kse +(C_R+1)\kse),
 \end{align*}
 where the first inequality follows from \eqref{eq:wbs size of intervals} and \eqref{eq:wbs noise 2},
 and the second inequality from \Cref{lemma:number of changes}.
\vskip 3mm

\noindent {\bf Step 6.}
Finally, we have that
\begin{align*}
III = \sqrt{e-b}\left(\frac{1}{e-b}\sum_{i = b+1}^e \varepsilon^{s, e}_i\right)\left(\frac{1}{e-\eta_k}\sum_{i=\eta_k + 1}^e f_i - \frac{1}{e-b}\sum_{i=b+1}^e f_i\right).	
\end{align*}
Therefore,
\begin{align*}
|III| \leq \sqrt{e-b}\lambda \frac{b-\eta_k}{e-b}(C_R+1)\kse	 \leq \frac{4\sqrt{2}}{\min\{1, c_1\}}|b-\eta_k | \Delta^{-1/2} \lambda  (C_R+1) \kse.
\end{align*}

\vskip 3mm

\noindent {\bf Step 7.} 
Using the first part of \Cref{lemma:WBS projection}, the first term on the right hand side of \eqref{eq:perturbations} can be bounded  as
$$\langle \varepsilon^{s,e}, \p_d(X^{s,e})-\p_d(f^{s,e})\rangle\le \lambda^2.$$
 Thus \eqref{eq:WBS sufficient 2}
 holds if 
\[
|b-\eta_k|(\kse)^2 \ge  C  \max\left\{  |b-\eta_k| \Delta^{-1/2} \lambda \kse  ,\quad  \sqrt {b-{\eta_k}}    \lambda \kse  ,\quad \lambda^2 \right\}.
\]
Since $\lambda \le c_3\sqrt \Delta \kappa  $, the first inequality holds. The second inequality follows from
$|b-\eta_k| \ge C_3\lambda^2 (\kappa_k)^{-2} \ge C_3\lambda^2 (\kse)^{-2}$, as
assumed in \eqref{eq:wbs contradict assume}.
This completes the proof.
\end{proof}

\begin{corollary}
\label{coro:wbs 1d} Let  $[s_0,e_0]$ be a generic interval satisfying $e_0-s_0\le
C_R\Delta$ and containing at lest one change point $\eta_r$ such that 
\[
\eta_{r-1} \le s_0\le \eta_r \le \ldots\le \eta_{r+q} \le e_0 \le \eta_{r+q+1},
\quad q\ge 0.
\]
 Suppose $\min\{ \eta_{p'} -s_0 , e_0 -\eta_{p'} \}\ge \Delta /16$ for some $p'$
 and, let 
$\kse= \max\{\kappa_p: \min\{ \eta_p -s_0 , e_0 -\eta_p \} \ge \Delta /16\}$.  Consider a generic interval $(s, e) \subset (s_0, e_0)$, satisfying  
\[
\min\{ \eta_{p} -s_0 , e_0 -\eta_{p} \} \ge \Delta /16 \quad \text{for all } \eta_p \in[s,e].
\]
Let $\delta'>0$ be some constant and  $b'=\arg \max_{\lceil s+ \delta '      \rceil, \ldots, \lfloor e- \delta '   \rfloor}|\widetilde Y_{t}^{s,e}  | $.
Suppose  in addition that, for some positive constants $c_1$ and $c_2$,  
\begin{align}
 \label{eq:coro wbs size of sample}
&|\widetilde Y_{b'}^{s,e}  |  \ge c_1 \kse \sqrt{\Delta}  ,
\\
&\sup_{t= \lceil s+ \delta'       \rceil, \ldots, \lfloor e- \delta '   \rfloor}
|\widetilde Y_{t}^{s,e}   - \widetilde f_{t}^{s,e} | \le \lambda, \label{eq:coro wbs noise 1}
\\
&\sup_{s_1\le t\le e_1} \frac{1}{\sqrt{e_1-s_1}}| \sum_{t=s_1+1}^{e_1} ( Y_t
-f_t )| \le  \lambda , \label{eq:coro wbs noise 2}
\quad \text{for every} \quad e_1-s_1 \ge \delta',
\\
&\lambda\le c_2\kse\sqrt \Delta, \label{eq:coro wbs noise}\\
\text{and} & \nonumber \\ 
&\delta'   \le    c_2 \Delta \label{eq:coro wbs short spacing}.
\end{align} 
 Then there exists a change point $\eta_{k} \in [s,e] $  such that 
\begin{align*}
&\min \{e-\eta_k,\eta_k-s\}  >  \Delta /4 \\
&|\eta_{k} -b' |\le \max\{ C_3\lambda^2\kappa_k^{-2},\delta'\}.
\end{align*}
\end{corollary}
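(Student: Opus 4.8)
The plan is to re-run the proof of \Cref{lemma:wbs 1d} essentially verbatim, with the constrained maximizer $b'$ in place of the unconstrained maximizer $b$ and with $\delta'$ in place of $\delta$, and to verify that restricting both the search and the noise control to points at least $\delta'$ from the endpoints of $[s,e]$ costs nothing. The only genuinely new point to check is that each distinguished location produced by that argument---the target change point $\eta_k$, the Venkatraman comparison point $d$, and $b'$ itself---lies safely in the interior band $\{\lceil s+\delta'\rceil,\ldots,\lfloor e-\delta'\rfloor\}$, so that the weaker, restricted hypotheses \eqref{eq:coro wbs noise 1} and \eqref{eq:coro wbs noise 2} are already strong enough to reproduce every estimate of \Cref{lemma:wbs 1d}. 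This is the exact analogue of the short reduction used in the proof of \Cref{coro:1d bs}.

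First I would reproduce Step 1 of \Cref{lemma:wbs 1d}. Assuming without loss of generality that $\widetilde f_{b'}^{s,e}>0$ and is locally decreasing at $b'$, the lower bound \eqref{eq:coro wbs size of sample} together with \eqref{eq:coro wbs noise 1} and \eqref{eq:coro wbs noise} yields $\widetilde f_{b'}^{s,e}\ge |\widetilde Y_{b'}^{s,e}|-\lambda\ge (c_1/2)\kse\sqrt{\Delta}>0$. Invoking \Cref{lemma:continuation} exactly as in \Cref{lemma:wbs 1d} produces a change point $\eta_k\in[s,b']$ with $\widetilde f_{\eta_k}^{s,e}\ge \widetilde f_{b'}^{s,e}\ge (c_1/2)\kse\sqrt{\Delta}$, and the boundary estimates \Cref{lemma:cusum boundary bound}, \Cref{lemma:one change point basics} and \Cref{lemma:number of changes}, together with \eqref{eq:coro wbs short spacing}, force $\min\{\eta_k-s,e-\eta_k\}>\Delta/4$. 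Since $\delta'\le c_2\Delta<\Delta/4$, this $\eta_k$ automatically lies at least $\delta'$ from both endpoints and so is never excluded by the restriction.

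Next I would carry out Steps 2 through 7 of \Cref{lemma:wbs 1d} with $b'$ and $\delta'$. \Cref{lemma:Venkatraman} supplies a point $d\in[\eta_k,\eta_k+\lambda\sqrt{\Delta}(\kse)^{-1}]$ with $\widetilde f_{\eta_k}^{s,e}-\widetilde f_d^{s,e}>2\lambda$; since $\lambda\sqrt{\Delta}(\kse)^{-1}\le c_2\Delta<\Delta/4$ by \eqref{eq:coro wbs noise}, the points $d$ and $b'$ also sit well inside the interior band, and the comparison yielding $b'\in[\eta_k,\eta_k+\lambda\sqrt{\Delta}(\kse)^{-1}]$ goes through unchanged. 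The core projection (ANOVA) argument I would then run as a dichotomy, which is exactly what turns the $\min$ of \Cref{lemma:wbs 1d} into the $\max$ in the present conclusion: if $|b'-\eta_k|\le\delta'$ the bound $|b'-\eta_k|\le\max\{C_3\lambda^2\kappa_k^{-2},\delta'\}$ is immediate, whereas if $|b'-\eta_k|>\delta'$ then each of the three intervals $[s+1,\eta_k]$, $[\eta_k+1,b']$, $[b'+1,e]$ entering the decomposition of the cross term $\langle\varepsilon^{s,e},\p_{b'}(f^{s,e})-\p_{\eta_k}(f^{s,e})\rangle$ has length at least $\delta'$ (the outer two because $\eta_k-s>\Delta/4$ and $e-b'\ge\delta'$, the middle one by assumption), so the restricted noise control \eqref{eq:coro wbs noise 2} applies verbatim and the identical estimates on the terms $I$, $II$, $III$ give $|b'-\eta_k|\le C_3\lambda^2\kappa_k^{-2}$.

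The main obstacle---and the reason the localization rate degrades to $\max\{C_3\lambda^2\kappa_k^{-2},\delta'\}$---is precisely this dichotomy: we have no noise control on intervals shorter than $\delta'$, hence no way to localize $b'$ below the resolution $\delta'$. Beyond this, the argument is bookkeeping: one only needs the two slack conditions $\delta'\le c_2\Delta$ and $\lambda\le c_2\kse\sqrt{\Delta}$ to keep $\eta_k$, $d$ and $b'$ inside the interior band, which is what makes the restricted hypotheses \eqref{eq:coro wbs noise 1} and \eqref{eq:coro wbs noise 2} as effective as the full-range hypotheses used in \Cref{lemma:wbs 1d}.
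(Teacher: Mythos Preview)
Your proposal is correct and follows essentially the same route as the paper. The paper's own proof is a single line: it asserts that, by the same argument as in \Cref{coro:1d bs}, the unconstrained maximizer $b$ of \Cref{lemma:wbs 1d} coincides with $b'$ when $c_2$ is small enough, so the lemma applies directly. Your write-up is the careful unpacking of what that one line means: you re-run the proof of \Cref{lemma:wbs 1d} with $b'$ in place of $b$, and verify that the distinguished points $\eta_k$, $d$ and $b'$ all fall at least $\delta'$ from the endpoints, so the restricted hypotheses \eqref{eq:coro wbs noise 1}--\eqref{eq:coro wbs noise 2} suffice. Your dichotomy on whether $|b'-\eta_k|\le\delta'$ is exactly the device already built into Step~3 of \Cref{lemma:wbs 1d} via the contradiction hypothesis \eqref{eq:wbs contradict assume}, and it is what forces the $\max$ in the localization bound (the $\min$ in the statement of \Cref{lemma:wbs 1d} is a typo, as the proof there makes clear). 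In short, you and the paper are doing the same thing; you have simply written out what the paper leaves to the reader.
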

\begin{proof}
By the same proof of \Cref{coro:1d bs}, if $b$ is defined as in \Cref{lemma:wbs 1d}, then $b=b'$ if $c_2$ is sufficiently small.
\end{proof}

\section{Proofs of the Results from \Cref{section:lb}}

\begin{proof}[{\bf Proof of \Cref{lemma:lower bound 1}}]
For any vector $u\in \mathbb R^p$, denote  $\widetilde  \Sigma_{u} = \sigma^2 I_p+\kappa u u^{\top}$. Observe that
if $\kappa \le \sigma^2/4, $ $\| \widetilde \Sigma_{u}\|_{\op} = \sigma^2 +\kappa \le 2 \sigma^2$.

\vskip 3mm

\noindent {\bf Step 1.}  Let $\widetilde  P_{0,u}^n$ denote the joint distribution of independent random
vectors $\{X_i\}_{i=1}^n$ in $\mathbb{R}^p$ such  that 
$$X_1,\ldots,X_{\Delta} \stackrel{i.i.d.}{\sim} N_p(0,\widetilde  \Sigma_u)
\quad \text{and} \quad X_{\Delta+1},\ldots, X_n \stackrel{i.i.d.}{\sim} N_p(0,\sigma^2I).$$ 
Similarly, let 
$\widetilde   P_{1,u}^n$ denote the joint distribution of independent random
vectors  $\{X_i\}_{i=1}^n$ in $\mathbb{R}^p$ with
$$X_1,\ldots,X_{n-\Delta} \stackrel{i.i.d.}{\sim} N_p(0, \sigma^2 I)  \quad
\text{and} \quad X_{n-\Delta+1},\ldots, X_n \stackrel{i.i.d.}{\sim} N_p(0,\widetilde  \Sigma_u).$$ 
Let $ \widetilde  P_i^n=\frac{1}{2^p} \sum_{u\in \{\pm 1\}^{p}/\sqrt{p}} \widetilde  P_{i,u}^n $.
Let $\eta(P_{i,u}^n)$ denote the location of the change point associated to
the distribution $\widetilde  P_{i,u}^n$.
Then 
since $ \eta (\widetilde  P_{0,u}^n)=\Delta$ and 
$ \eta ( \widetilde  P_{1,u}^n)=n-\Delta$ for any $u\in\{\pm 1\}^p/\sqrt{p}$,
$ |\eta (\widetilde  P_{0,u}^n)- \eta (\widetilde  P_{1,u}^n)|\ge n/3 $.
By Le Cam's lemma \citep[see, e.g.][]{yu1997assouad},
\[
\inf_{\hat \eta} \sup_{P\in \mathcal P_{\kappa,\Delta,\sigma}^n } \mathbb{E}_P(|\hat \eta -\eta|) \ge 
(n/3)(1-d_{TV}(\widetilde  P_0^n,\widetilde  P_1^n)),
\]
where $d_{TV}(\widetilde  P_0^n, \widetilde P_1^n) =\frac{1}{2}\| \widetilde  P_0^n- \widetilde  P_1^n\|_1 $.

Let $  \Sigma_{u} =  I_p+ \widetilde \kappa u u^{\top}$, where $\widetilde  \kappa= \kappa / \sigma^2$.  Observe that by assumption 
 $\widetilde \kappa\le 1/ 4.$
Denote with $P_{0,u}^n$ and $ P_{1,u}^n$ the joint distributions of independent
samples $\{X_i\}_{i=1}^n$ in $\mathbb{R}^p$ where
$$X_1,\ldots,X_{\Delta} \stackrel{i.i.d.}{\sim} N_p(0,  \Sigma_u) \quad
\text{and} \quad  X_{\Delta+1},\ldots, X_n \stackrel{i.i.d.}{\sim} N_p(0,I)$$
and
$$X_1,\ldots,X_{n-\Delta} \stackrel{i.i.d.}{\sim} N_p(0,  I)  \quad \text{and}
\quad X_{n-\Delta+1},\ldots, X_n \stackrel{ii.d.}{\sim} N_p(0,  \Sigma_u),$$
respectively.
Since total variation distance is invariant under  rescaling of the covariance,
then $\| \widetilde  P_0^n- \widetilde  P_1^n\|_1=\|   P_0^n-   P_1^n\|_1$. Therefore
\begin{equation}\label{eq:Lecam}
\inf_{\hat \eta} \sup_{P\in \mathcal P_{\widetilde \kappa,\Delta,\sigma}^n } \mathbb{E}_P(|\hat \eta -\eta|) \ge 
(n/3)(1-d_{TV}(  P_0^n, P_1^n))
\end{equation}

\vskip 3mm

\noindent {\bf Step 2.}  Let $x= (x_1,\ldots,x_\Delta)$,  $y=(x_{\Delta+1},\ldots, x_{n-\Delta})$
and $z= (x_{n-\Delta+1},\ldots,  x_n)$.
Let 
\begin{itemize}
\item[$\bullet$]$f_0(x)$ denote the joint distribution of  $X_1,\ldots,X_{\Delta} \sim N_p(0,I)$ and 
$f_u(x)$ denote the joint distribution of  $X_1,\ldots,X_{\Delta}
\stackrel{i.i.d.}{\sim} N_p(0, \Sigma_u)$;
\item[$\bullet$]$g_0(x)$ denote the joint distribution of  $X_{\Delta+1},\ldots,X_{n-\Delta} \stackrel{i.i.d.}{\sim} N_p(0,I)$;
\item[$\bullet$]$h_0(x)$ denote the joint distribution of  $X_{n-\Delta+1},\ldots,X_{n} \stackrel{i.i.d.}{\sim} N_p(0,I)$ and 
$h_u(x)$ denote the joint distribution of $X_{n-\Delta+1},\ldots,X_{n} \stackrel{i.i.d.}{\sim} N_p(0,\Sigma_u)$.
\end{itemize}
Then,
\begin{align*}
&\| P_0^n- P_1^n\|_1
\\
= &\int \int \int \left | \frac{1}{2^p}  \sum_{u \in \{\pm 1\}^p/\sqrt p}  f_u (x) g_0(y)h_0(z)   -\frac{1}{2^p}\sum_{u \in \{\pm 1\}^p/\sqrt p}   f_0 (x) g_0(y)h_u(z)  
\right|\, dxdydz
\\
 =&  \int g_0(y)dy \int \int \left| \frac{1}{2^p}  \sum_{u \in \{\pm 1\}^p/\sqrt p}  f_u (x) h_0(z)   -\frac{1}{2^p} \sum_{u \in \{\pm 1\}^p/\sqrt p}  f_0 (x)h_u(z)  
\right|\, dxdz
\\
= &\int \int \left| \frac{1}{2^p}  \sum_{u \in \{\pm 1\}^p/\sqrt p}  f_u (x) h_0(z)   -\frac{1}{2^p} \sum_{u \in \{\pm 1\}^p/\sqrt p}  f_0 (x)h_u(z)  
\right|\, dxdz 
\\
\le&
\int \int \left|\frac{1}{2^p}  \sum_{u \in \{\pm 1\}^p/\sqrt p}  f_u (x) h_0(z)   -f_0(x)h_0(z)\right|  
+ \left |f_0(x) g_0(z)-\frac{1}{2^p} \sum_{u \in \{\pm 1\}^p/\sqrt p}  f_0 (x)h_u(z)  \right |\,dxdz\\
=&2 \| P^\Delta_0 - P^\Delta_1\|_1,
\end{align*}
where $P^\Delta_0$ is the joint distribution of $X_1,\ldots,X_{\Delta} \stackrel{i.i.d.}{\sim} N_p(0,I)$
and $P^\Delta_1 = \frac{1}{2^p} \sum_{u \in \{\pm 1\}^p/\sqrt p}  P^\Delta_{1,u}$, where
$P^\Delta_{1,u}$ is the joint distribution of $X_1,\ldots,X_{\Delta} \stackrel{i.i.d.}{\sim} N_p(0,\Sigma_u)$.
Thus \eqref{eq:Lecam} becomes
\begin{equation}\label{eq:Lecam 2} \inf_{\hat \eta}\sup_{P\in \mathcal P_{\widetilde \kappa,\Delta,\sigma}^n } \mathbb{E}_P(|\hat \eta -\eta|)  \ge  (n/3)(1-\| P^\Delta_0 - P^\Delta_1\|_1).\end{equation}

\vskip 3mm

\noindent {\bf Step 3.}  
To bound $2\|  P_0^\Delta-  P_1^\Delta\|_1 $, let 
$P_0=N_p(0, I_p)$ and $P_u=  N_p(0, \Sigma_{u}) $.
It is easy to see that 
\begin{align*}\chi^2 (P_1^\Delta, P_0^\Delta) 
= \mathbb{E}_{P_0^\Delta}\left( \frac{dP_1^\Delta}{dP_0^\Delta} -1\right)^2
=\frac{1}{4^p} \sum_{u,v\in \{\pm 1\}^p} \mathbb{E}_{P_0^\Delta}\left( \frac{dP_u^\Delta}{dP_0^\Delta} \frac{dP_v^\Delta}{dP_0^\Delta}\right)-1.
\end{align*}
For any $u,v \in \{\pm 1\}^p$,
\begin{align*}
\mathbb{E}_{P_0^\Delta}\left( \frac{dP_u^\Delta}{dP_0^\Delta} \frac{dP_v^\Delta}{dP_0^\Delta}\right)
=\left(\mathbb{E}_{P_0}\left( \frac{dP_u}{dP_0} \frac{dP_v}{dP_0}\right)\right)^\Delta 
=(1-(\widetilde  \kappa u^{\top}v)^2 )^{-\Delta/2},
\end{align*}
where the last equality follows from \Cref{lemma:gaussian property}. 
Denote
 $ U$ and $V$ to be the $p$ dimensional Rademacher variables  with $U$ being independent of $V$ and 
 $\varepsilon_p = \left( 1^{\top}V/p\right)^2$. Thus
\begin{align*}
\chi^2 (P_1^\Delta, P_0^\Delta) &=
\frac{1}{4^p} \sum_{u,v\in \{\pm 1\}^p}(1-(\widetilde \kappa u^{\top}v)^2 )^{-\Delta/2} -1
\\
&= \mathbb{E}_{U,V} \left( \left( 1-(\widetilde  \kappa U^{\top}V/p)^2 \right)^{\Delta/2}\right) -1\\
&= \mathbb{E}_{V} \left( \left( 1-(\widetilde  \kappa 1^{\top}V/p)^2 \right)^{\Delta/2}\right) -1\\
&\le \mathbb{E} \left( \exp( \varepsilon_p \widetilde \kappa^2\Delta) \right)-1,
\end{align*}
where
 the  inequality follows from inequality 
 $(1-t)^{-\Delta/2}\le \exp(\Delta t) $ for any $t\le 1/2 $ and that $\widetilde  \kappa\le 1/\sqrt{2}$.
The Hoeffding's inequality, applied to Rademacher variables, gives
\begin{equation}\label{eq:Hoeffding} P(\varepsilon_p \ge \lambda)\le 2 e^{-2p\lambda}.
\end{equation}
Thus 
\begin{align*}
\mathbb{E} \left( \exp( \varepsilon_p\widetilde  \kappa^2\Delta) \right)
&= \int_0^\infty P\left(\exp \left(\varepsilon_p \widetilde  \kappa^2\Delta \ge u\right)\right) \, du\\
&\le 1 +\int_1^\infty P\left( \varepsilon \ge \log(u)/(\widetilde  \kappa^2\Delta) \right)\, du\\
& \le 1+  \int_1^\infty 2\exp\left(- \frac{\log(u)2p}{\widetilde \kappa^2\Delta} \right)\, du \\
&= 1-  \frac{2}{1-\frac{2p}{\widetilde \kappa^2\Delta}},
\end{align*}
where the second inequality follows from \eqref{eq:Hoeffding}, and the last equality holds if 
$\frac{2p}{\widetilde  \kappa^2\Delta}>1. $ Thus if 
$\Delta = \frac{2p}{33\widetilde  \kappa^2} = \frac{2p\sigma^4}{33\kappa^2} ,$
 then, using the well-known fact that 
$\|  P_0^\Delta-  P_1^\Delta\|_1 \le 2\sqrt{ \chi^2 (P_1^\Delta, P_0^\Delta) }
$, we obtain the bounds
$$\|  P_0^\Delta-  P_1^\Delta\|_1 \le 2\sqrt{ \chi^2 (P_1^\Delta, P_0^\Delta) } \le  2 \sqrt{\frac{2}{\frac{2p}{ \widetilde \kappa^2\Delta}-1}}= 1/2  .$$
This and \eqref{eq:Lecam 2} give
 \begin{equation*}
\inf_{\hat \eta} \sup_{P\in \mathcal P } \mathbb{E}_P(|\hat \eta -\eta|) \ge n/6.
\end{equation*}
\end{proof}

\begin{proof}[Proof of \Cref{lemma:lower bound 2}]
Let 
$$\mathcal Q = \left\{ P^n_{\kappa,\Delta}:     \Delta < n/2 \right\}.$$
Using the same argument as in Step 1 of the proof of  \Cref{lemma:lower bound 1}, it suffices to 
take 
$$\widetilde \kappa= \kappa / \sigma^2 \le 1/4,$$ and consider 
$\Sigma_{u} = I_p+\widetilde \kappa u u^{\top}$.

Let $u$ to be any unit vector in $\mathbb R^p$ and $\delta$ a positive number.
Let $P_{0}^n$ denote the joint distribution of  independent samples $\cX$ in
$\mathbb{R}^p$ where
$$X_1,\ldots,X_{\Delta-1} \stackrel{iid.}{\sim} N_p(0,I) , \quad X_{\Delta},\ldots, X_n  \stackrel{iid.}{\sim} N_p(0,\Sigma_u)$$ 
and 
$ P_{1}^n$ the joint distribution of  independent samples $\cX$ in
$\mathbb{R}^p$ where
$$X_1,\ldots,X_{\Delta+\delta} \stackrel{iid.}{\sim} N_p(0,I) , \quad X_{\Delta+\delta+1},\ldots, X_n \stackrel{iid.}{\sim} N_p(0,\Sigma_u).$$ 
Let $\eta(P_{i}^n)$ denote the location of the change point of distribution $P_{i,u}^n$.
Then 
since $ \eta (P_{0}^n)=\Delta$ and 
$ \eta (P_{1}^n)=\Delta+\delta$, thus 
$ |\eta (P_{0}^n)- \eta (P_{1}^n)|\ge \delta $.
By Le Cam's lemma \citep{yu1997assouad},
\[
\inf_{\hat \eta} \sup_{P\in \mathcal Q } \mathbb{E}_P(|\hat \eta -\eta|) \ge 
\delta(1-d_{TV}(P_0^n,P_1^n)) = \delta(1 -\frac{1}{2} \| P_0^n-P_1^n\|_1) 
\]
Since $X_1,\ldots X_\Delta-1$ and $X_{\Delta+\delta},\ldots, X_{n}$ are identically distributed as $P_0^n$ and $P_1^n$ respectively,
$\| P_0^n-P_1^n\|_1 = \|P_0^\delta- P_1^\delta\|_1$,
where $P_0^\delta$ is the joint distribution of $X_1,X_\delta \sim N_p(0,I)$
and $P_1^\delta$ is the joint distribution of $X_1,X_\delta \sim N_p(0,\Sigma_u)$. 
By \Cref{lemma:gaussian property},
\begin{align*}
\chi^2  (P_1^\delta, P_0^\delta) =  (1 - \widetilde  \kappa^2 ) ^{-\delta/2} -1 \le 4 \widetilde  \kappa^2 \delta.
\end{align*}
if $\widetilde\kappa^2 \delta/2\le 1/4 $ and $\delta/2\ge 2$.
Thus by taking 
$\delta =\widetilde  \kappa^{-2}/4  $ , we have
$$4 \le \delta = \kappa^{-2}\sigma^{4}/4  \le\Delta/(4p \log (n)) \le \Delta/4$$
and that
$$ \inf_{\hat \eta} \sup_{P\in \mathcal Q } \mathbb{E}_P(|\hat \eta -\eta|) \ge  \delta\left(1 - \frac{1}{2} \sqrt{  \chi^2 (P_1^\delta, P_0^\delta) } \right) \ge \widetilde  \kappa^{-2} /32 = \sigma^4\kappa^{-2} /32 .$$

\end{proof}

\begin{lemma}\label{lemma:gaussian property}
Let $P_0 =N_p(0,I_p)$ and $P_u = N_p(0, I_p+\kappa u u')$. Then 
$$ \mathbb{E}_{P_0} \left(\frac{dP_u}{dP_0} \frac{dP_v}{dP_0}\right)  = \left(1- (\kappa u^{\top}v)^2 \right)^{-1/2}.$$
\end{lemma}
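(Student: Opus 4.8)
The plan is to reduce the stated expectation to a single Gaussian integral and then evaluate the resulting determinants via the matrix determinant lemma. Throughout I take $u$ and $v$ to be unit vectors, as is the case in the application (where $u,v \in \{\pm 1\}^p/\sqrt p$). Writing $\phi$ for the $N_p(0,I_p)$ density and $p_{\Sigma}$ for the $N_p(0,\Sigma)$ density, and setting $\Sigma_u = I_p + \kappa uu^\top$, the Radon--Nikodym derivative is $\frac{dP_u}{dP_0}(x) = |\Sigma_u|^{-1/2}\exp\bigl(-\tfrac12 x^\top(\Sigma_u^{-1}-I_p)x\bigr)$. Hence
\[
\mathbb E_{P_0}\Bigl(\tfrac{dP_u}{dP_0}\tfrac{dP_v}{dP_0}\Bigr) = \int \frac{p_{\Sigma_u}(x)\,p_{\Sigma_v}(x)}{\phi(x)}\,dx = |\Sigma_u|^{-1/2}|\Sigma_v|^{-1/2}\int (2\pi)^{-p/2}\exp\bigl(-\tfrac12 x^\top M x\bigr)\,dx,
\]
where $M = \Sigma_u^{-1} + \Sigma_v^{-1} - I_p$, the cross term $-\tfrac12 x^\top x$ coming from the extra factor of $\phi^{-1}$.

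Second, I would evaluate the Gaussian integral. Provided $M$ is positive definite, $\int (2\pi)^{-p/2}\exp(-\tfrac12 x^\top M x)\,dx = |M|^{-1/2}$, so the whole expression equals $\bigl(|\Sigma_u|\,|\Sigma_v|\,|M|\bigr)^{-1/2}$. The task thus reduces to computing these three determinants. By the matrix determinant lemma, $|\Sigma_u| = 1 + \kappa\|u\|^2 = 1+\kappa$, and likewise $|\Sigma_v| = 1+\kappa$. By Sherman--Morrison, $\Sigma_u^{-1} - I_p = -\tfrac{\kappa}{1+\kappa}uu^\top$, and similarly for $v$, so with $a := \tfrac{\kappa}{1+\kappa}$ we obtain $M = I_p - a(uu^\top + vv^\top)$.

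Finally, to compute $|M|$ I would use Sylvester's identity: writing $W = [u\ v]\in\mathbb R^{p\times 2}$, so that $uu^\top+vv^\top = WW^\top$, gives $|M| = |I_p - aWW^\top| = |I_2 - aW^\top W|$. Since $W^\top W = \left(\begin{smallmatrix}1 & \rho\\ \rho & 1\end{smallmatrix}\right)$ with $\rho = u^\top v$, a direct $2\times 2$ computation yields $|M| = (1-a)^2 - a^2\rho^2 = \tfrac{1 - \kappa^2\rho^2}{(1+\kappa)^2}$. Multiplying, $|\Sigma_u|\,|\Sigma_v|\,|M| = 1 - (\kappa u^\top v)^2$, which gives the claimed value $(1-(\kappa u^\top v)^2)^{-1/2}$. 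The only point requiring care — the mild \emph{obstacle} — is to confirm that the Gaussian integral converges, i.e. that $M \succ 0$: the eigenvalues of $W^\top W$ lie in $[0,2]$, so $\lambda_{\min}(M) \ge 1 - 2a > 0$ precisely when $\kappa < 1$, which holds in the regime of interest (indeed $\kappa \le 1/4$ there).
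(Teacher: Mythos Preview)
Your computation is correct: the reduction to a single Gaussian integral with quadratic form $M=\Sigma_u^{-1}+\Sigma_v^{-1}-I_p$, the evaluation of $|\Sigma_u|$, $|\Sigma_v|$ via the matrix determinant lemma, and the reduction of $|M|$ to a $2\times 2$ determinant via Sylvester's identity are all carried out cleanly, and the positive-definiteness check ($\kappa<1$) is exactly the right side condition.

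As for comparison with the paper: the paper does not actually prove this lemma but simply cites Lemma~5.1 of \cite{berthet2013optimal}. Your argument is therefore more self-contained than what the paper offers. The Berthet--Rigollet computation proceeds along essentially the same lines (ratio of Gaussian densities, Gaussian integral, determinant calculation), so your approach is the standard one; there is no meaningfully different route being taken on either side.
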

\begin{proof}
See, e.g., lemma 5.1 in \cite{berthet2013optimal}.
\end{proof}
\begin{lemma} For $t\ge 2,x\ge 0$, if  $tx \le 1/4 $,
$ (1-x)^{-t}  -1\le 4tx $
\end{lemma}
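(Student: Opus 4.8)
The plan is to reduce the claim to a one-variable exponential inequality through two elementary bounds, and then verify the resulting inequality by a monotonicity argument. Throughout, note that the hypotheses $t \ge 2$ and $tx \le 1/4$ force $x \le \tfrac{1}{4t} \le \tfrac18 \le \tfrac12$, so $(1-x)^{-t}$ is well-defined and positive and all the bounds below apply.

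First I would control a single factor: for $0 \le x \le 1/2$ one has $(1-x)^{-1} \le 1 + 2x$, since this is equivalent to $1 \le (1-x)(1+2x) = 1 + x - 2x^2$, i.e. to $x(1-2x) \ge 0$, which holds on $[0,1/2]$. Raising to the $t$-th power and then using $1 + u \le e^u$ with $u = 2x$ gives
\[
(1-x)^{-t} \le (1+2x)^t \le e^{2tx}.
\]

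Next I would set $y = tx$, so that $0 \le y \le 1/4$, and reduce the claim to showing $e^{2y} - 1 \le 4y$ on $[0,1/4]$; combined with the previous display this yields $(1-x)^{-t} - 1 \le e^{2y} - 1 \le 4y = 4tx$, as required. To prove $e^{2y} \le 1 + 4y$ on $[0,1/4]$, I would consider $h(y) = 1 + 4y - e^{2y}$. Then $h(0) = 0$ and $h'(y) = 4 - 2e^{2y}$, which is positive precisely when $e^{2y} < 2$, i.e. when $y < \tfrac12 \log 2 \approx 0.347$. Since $1/4 < \tfrac12\log 2$, the function $h$ is strictly increasing on $[0,1/4]$, so $h(y) \ge h(0) = 0$ throughout, which is the desired bound.

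There is no substantive obstacle here: the only point that requires a little care is the choice of intermediate constants, so that the margin $1/4$ in the hypothesis $tx \le 1/4$ sits comfortably inside the region $y < \tfrac12\log 2$ where the bound $e^{2y} \le 1 + 4y$ holds. A slightly looser first step, such as bounding $-\log(1-x) \le x + x^2$ directly, would also work, but the factorwise bound $(1-x)^{-1} \le 1+2x$ keeps the algebra minimal and makes the passage to $e^{2tx}$ transparent.
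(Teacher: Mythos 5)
Your proof is correct, and it takes a genuinely different route from the paper's. The paper proves the lemma via a second-order Taylor expansion with Lagrange remainder: it writes $(1-x)^{-t}-1 = tx + t(t+1)x^2(1-s)^{-t-2}$ for some $s\in[0,x]$ (strictly, the remainder should carry a factor $\tfrac12$, but only the upper bound is used), then bounds the remainder by $4t^2x^2(1-x)^{-t} \le 2tx\cdot 2tx\,(1-x)^{-t}$ and \emph{absorbs} it: since the remainder is a multiple of the quantity being bounded, one solves $(1-x)^{-t}-1 \le 2tx + 4t^2x^2\bigl((1-x)^{-t}-1\bigr)$ to get $(1-x)^{-t}-1 \le \frac{2tx}{1-4t^2x^2} \le 4tx$, using $4t^2x^2\le 1/4$. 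You instead chain the factorwise bounds $(1-x)^{-1}\le 1+2x$ (valid on $[0,1/2]$, and the hypotheses give $x\le 1/8$) and $1+u\le e^u$ to reduce everything to the one-variable inequality $e^{2y}\le 1+4y$ on $[0,1/4]$, which you verify by noting $h(y)=1+4y-e^{2y}$ has $h(0)=0$ and $h'(y)=4-2e^{2y}>0$ for $y<\tfrac12\log 2\approx 0.347$. Your route is more elementary (no Taylor remainder, no self-bounding step) and sidesteps the paper's slightly sloppy remainder display; it also makes transparent exactly where the margin in $tx\le 1/4$ is spent, namely in $1/4 < \tfrac12\log 2$. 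The paper's absorption argument, on the other hand, yields the intermediate bound $\frac{2tx}{1-4t^2x^2}$, which is a template that adapts readily if one wants sharper constants or a different smallness threshold on $tx$. Both proofs use $t\ge 2$ only to keep $x$ safely below $1$ (you use it to get $x\le 1/8\le 1/2$), and both are complete.
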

\begin{proof}
There exists $s\in [0,x]$ such that 
$$ (1-x)^{-t} -1=  tx + t(t+1) x^2(1-s)^{-t-2} \le tx +4 t^2x^2  (1-x)^{-t}  \le 2tx +4 t^2x^2  ((1-x)^{-t}-1) .$$
Thus 
$$ (1-x)^{-t} -1\le \frac{2tx}{1-4t^2x^2} \le 4tx$$
\end{proof}

\section{Properties of the covariance CUSUM statistic}\label{sec-pre-1}
\subsection{Properties of 1d CUSUM statistics}

\begin{lemma}\label{lemma:number of changes}
Suppose $[s, e]\subset [1,T] $ such that $e-s\le C_R\Delta$,
and that 
$$
\eta_{r-1} \le s\le \eta_r \le \ldots\le \eta_{r+q} \le e \le \eta_{r+q+1}, \quad q\ge 0.
$$
Denote
$$\kappa_{\max}^{s,e} =\max \{ \eta_{p} -\eta_{p-1} : r\le p \le r+q\}.$$
Then for any $r-1 \le p \le r+q $,
$$ \left|\frac{1}{e-s}\sum_{i=s}^e f_i - f_{\eta_p} \right| \le C_R\kse.$$
\end{lemma}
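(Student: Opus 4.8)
The plan is to use only the piecewise-constant structure of the deterministic sequence $\{f_i\}$ on $[s,e]$, with no probabilistic input. The key observation is that $\frac{1}{e-s}\sum_{i=s+1}^e f_i$ is a \emph{convex combination} of the finitely many distinct values that $f$ takes on the interval, so it lies in $[\min_j f_{\eta_j}, \max_j f_{\eta_j}]$, the range spanned by those values; since $f_{\eta_p}$ also lies there, the whole estimate reduces to controlling the diameter of the set of piece-values, which I would bound by (number of jumps) times (size of the largest jump $\kse$).

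Concretely, first I would note that the change points $\eta_r,\ldots,\eta_{r+q}$ lying in $[s,e]$ split the index set $\{s+1,\ldots,e\}$ into blocks on each of which $f$ is constant, taking one of the values $f_{\eta_r},\ldots,f_{\eta_{r+q+1}}$. Letting $w_j\ge 0$ denote the fraction of indices in the block where $f\equiv f_{\eta_j}$, so that $\sum_j w_j=1$, I would write
\[
\Bigl|\tfrac{1}{e-s}\sum_{i=s+1}^e f_i - f_{\eta_p}\Bigr|
=\Bigl|\sum_j w_j\bigl(f_{\eta_j}-f_{\eta_p}\bigr)\Bigr|
\le \max_j \bigl|f_{\eta_j}-f_{\eta_p}\bigr|.
\]
Then I would telescope each difference through the intervening jumps, $f_{\eta_j}-f_{\eta_p}=\sum_{l}\bigl(f_{\eta_{l+1}}-f_{\eta_l}\bigr)$ over the change points between indices $p$ and $j$, so that $|f_{\eta_j}-f_{\eta_p}|\le(\text{number of such jumps})\cdot\kse$ by the very definition of $\kse$ as the largest jump in the interval.

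It remains to count the jumps, and here I would invoke the spacing hypothesis: since consecutive change points are at least $\Delta$ apart, $q\Delta\le \eta_{r+q}-\eta_r\le e-s\le C_R\Delta$ forces $q\le C_R$, so there are at most $\sim C_R$ distinct piece-values to traverse and the telescoped bound is $\le C_R\kse$, giving the claim. The argument is essentially bookkeeping; the one delicate point — and the main thing to get right — is the exact constant, since the naive count produces $q+1\le C_R+1$ jumps rather than $C_R$ (and the boundary index $p=r-1$ costs one more traversal). One should check that the integer slack in $q\le\lfloor C_R\rfloor$, together with the strict spacing, absorbs this off-by-one; this is consistent with the way the bound $(C_R+1)\kse$ is actually used in the proof of \Cref{lemma:wbs 1d}.
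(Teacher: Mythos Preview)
Your approach is essentially the same as the paper's: decompose the average into the piecewise-constant blocks, subtract $f_{\eta_p}$, telescope each $|f_{\eta_j}-f_{\eta_p}|$ through the intervening jumps, and bound the number of jumps via $q\Delta\le e-s\le C_R\Delta$. The paper carries the block weights through rather than passing immediately to the max, but this is cosmetic. Your observation about the constant is exactly right: the paper's own proof in fact arrives at $(C_R+1)\kse$, not $C_R\kse$, and it is the $(C_R+1)\kse$ bound that is invoked in the proof of \Cref{lemma:wbs 1d}.
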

\begin{proof}
Since $e-s\le C_R\Delta$, the interval $[s,e]$ contains at most $C_R+1$ change points.
Observe that 
\begin{align*}
 &\left|\frac{1}{e-s}\sum_{i=s}^e f_i - f_{\eta_p} \right|
\\
=
 & \frac{1}{e-s} \left|  \sum_{i=s}^{\eta_r} (f_{\eta_{r-1}} - f_{\eta_p}) +  \sum_{i={\eta_r +1}}^{\eta_{r+1}}(f_{\eta_{r }} - f_{\eta_p})
 + \ldots + \sum_{i={\eta_{r +q}+1}}^{e} (f_{\eta_{r +q}} - f_{\eta_p})
 \right|
 \\
 \le 
 &
  \frac{1}{e-s}  \sum_{i=s}^{\eta_r} |p-r|\kse +  \sum_{i={\eta_r +1}}^{\eta_{r+1}} |p-r-1|\kse
 + \ldots + \sum_{i={\eta_{r +q}+1}}^{e} |p-r-q-1|\kse
\\
\le &
 \frac{1}{e-s} \sum_{i=s}^{e}  (C_R+1)\kse,
 \end{align*}
where $|p_1 -p_2| \le C_R +1 $ for any $\eta_{p_1}, \eta_{p_2} \in [s,e]$ is used in the last inequality.
\end{proof}

\begin{lemma}\label{lemma:one change point basics}
If $\eta_p$ is the only change point in $[s,e]$, then
$$ |\widetilde f^{s,e}_{\eta_p} | = \sqrt { \frac{(\eta_p-s)(e-\eta_p)}{e-s}  } \kappa_p  \le \sqrt { \min \{\eta_p -s , e -\eta_p\} } \kappa_p$$ 
\end{lemma}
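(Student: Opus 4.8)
The plan is to exploit the fact that, when $\eta_p$ is the only change point in $[s,e]$, the sequence $\{f_i\}_{i=s+1}^e$ takes only two values. By \Cref{assump-model 1d}, the piecewise-constant structure forces $f_i = a := f_{\eta_p}$ for every $i \in \{s+1, \ldots, \eta_p\}$ and $f_i = b := f_{\eta_p + 1}$ for every $i \in \{\eta_p + 1, \ldots, e\}$, with $|a - b| = \kappa_p$. This reduces the evaluation of $\widetilde f_{\eta_p}^{s,e}$ to a direct algebraic computation.

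First I would substitute the two partial sums $\sum_{i=s+1}^{\eta_p} f_i = (\eta_p - s)\, a$ and $\sum_{i=\eta_p+1}^e f_i = (e - \eta_p)\, b$ into the definition of the CUSUM statistic evaluated at $t = \eta_p$. The prefactors then simplify cleanly, since
\[
\sqrt{\frac{e-\eta_p}{(e-s)(\eta_p-s)}}\,(\eta_p - s) = \sqrt{\frac{(\eta_p-s)(e-\eta_p)}{e-s}} = \sqrt{\frac{\eta_p-s}{(e-s)(e-\eta_p)}}\,(e - \eta_p),
\]
so that $\widetilde f_{\eta_p}^{s,e} = \sqrt{(\eta_p-s)(e-\eta_p)/(e-s)}\,(a - b)$. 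Taking absolute values and invoking $|a - b| = \kappa_p$ yields the claimed equality.

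For the inequality I would observe that $e - s = (\eta_p - s) + (e - \eta_p) \ge \max\{\eta_p - s,\, e - \eta_p\}$, whence
\[
\frac{(\eta_p - s)(e - \eta_p)}{e - s} \le \frac{(\eta_p - s)(e - \eta_p)}{\max\{\eta_p - s,\, e - \eta_p\}} = \min\{\eta_p - s,\, e - \eta_p\},
\]
and taking square roots (both sides nonnegative) gives the stated bound.

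There is essentially no serious obstacle here: the entire argument is elementary and self-contained. The only point requiring minor care is the correct bookkeeping of the two constant segments of $\{f_i\}$ together with the count of terms in each partial sum, namely $\eta_p - s$ and $e - \eta_p$ respectively; once these are identified, everything follows from the telescoping of the prefactors.
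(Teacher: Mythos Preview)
Your proof is correct and complete. The paper states this lemma without proof, treating it as an elementary computation; your argument supplies exactly the straightforward verification one would expect, namely substituting the two constant segments into the CUSUM formula and bounding the resulting factor $\sqrt{(\eta_p-s)(e-\eta_p)/(e-s)}$ by $\sqrt{\min\{\eta_p-s,\,e-\eta_p\}}$.
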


\begin{lemma}\label{lemma:cusum boundary bound}
Let $[s,e]$ contains two or more change points such that 
\[
\eta_{r-1} \le s\le \eta_r \le \ldots\le \eta_{r+q} \le e \le \eta_{r+q+1}, \quad q\ge 1.
\]
If 
$$\eta_{r}-s \le  c_1^2\Delta $$
 then
$$|\widetilde f^{s,e}_{\eta_r}| \le c_1  |  \widetilde f^{s,e}_{\eta_{r+1}}| +2\kappa_r  \sqrt {\eta_r -s}. $$
\end{lemma}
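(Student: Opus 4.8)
The plan is to peel off the contribution of the short first block $(s,\eta_r]$ and then compare the two CUSUM values for a signal that no longer jumps at $\eta_r$. I would write the restriction of $f$ to $[s,e]$ as $f^{s,e}=g+h$, where $g$ agrees with $f$ on $(\eta_r,e]$ but is set equal to the constant $f_{\eta_{r+1}}$ on all of $(s,\eta_{r+1}]$ (so that $g$ has no change point at $\eta_r$ and its first interior change point in $[s,e]$ is $\eta_{r+1}$), and $h=f^{s,e}-g$ is supported on $(s,\eta_r]$ with the constant value $f_{\eta_r}-f_{\eta_{r+1}}$. Since the CUSUM map $x\mapsto \widetilde x_t^{s,e}=\langle x,\psi_t^{s,e}\rangle$ is linear and $\psi_t^{s,e}$ is a unit vector (see \Cref{lemma:WBS projection}), we have $\tf_t=\widetilde g_t^{s,e}+\widetilde h_t^{s,e}$, and by Cauchy--Schwarz $|\widetilde h_t^{s,e}|\le \|h\|=\kappa_r\sqrt{\eta_r-s}$ for every $t$, since $h$ is supported on a block of length $\eta_r-s$ on which $|h|=\kappa_r$.

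With this decomposition the claim reduces to the single inequality $|\widetilde g_{\eta_r}^{s,e}|\le c_1|\widetilde g_{\eta_{r+1}}^{s,e}|$: granting it, the triangle inequality gives $|\tf_{\eta_r}|\le |\widetilde g_{\eta_r}^{s,e}|+\kappa_r\sqrt{\eta_r-s}\le c_1|\widetilde g_{\eta_{r+1}}^{s,e}|+\kappa_r\sqrt{\eta_r-s}\le c_1|\tf_{\eta_{r+1}}|+(1+c_1)\kappa_r\sqrt{\eta_r-s}$, and $(1+c_1)\le 2$ for $c_1\le 1$, which is the regime in which the lemma is applied. To establish the reduced inequality I would compute both CUSUM values of $g$ explicitly, exploiting that $g$ is constant on $(s,\eta_{r+1}]$. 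Setting $a=\eta_r-s$, $m=\eta_{r+1}-\eta_r$ and $L=e-\eta_{r+1}$, and writing $\Xi$ for the difference between $f_{\eta_{r+1}}$ and the average of $g$ over $(\eta_{r+1},e]$, a direct calculation (at $\eta_r$ the right-hand average is the length-weighted mixture of the constant $f_{\eta_{r+1}}$, over the block of length $m$, and the tail average, over length $L$) yields
\[
\widetilde g_{\eta_r}^{s,e}=\frac{\sqrt{a}\,L}{\sqrt{(e-s)(m+L)}}\,\Xi,\qquad \widetilde g_{\eta_{r+1}}^{s,e}=\sqrt{\frac{(a+m)L}{e-s}}\,\Xi.
\]

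Taking the ratio, the factor $\Xi$ cancels and the squared ratio equals $\tfrac{aL}{(m+L)(a+m)}=\tfrac{a}{a+m}\cdot\tfrac{L}{m+L}$. Here $\tfrac{L}{m+L}\le 1$, while the hypothesis $\eta_r-s\le c_1^2\Delta$ together with $m=\eta_{r+1}-\eta_r\ge\Delta$ gives $\tfrac{a}{a+m}\le\tfrac{a}{m}\le c_1^2$; hence the squared ratio is at most $c_1^2$ and $|\widetilde g_{\eta_r}^{s,e}|\le c_1|\widetilde g_{\eta_{r+1}}^{s,e}|$, as required (when $\Xi=0$, equivalently $L=0$, both quantities vanish and the inequality is trivial). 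The only real subtlety is the bookkeeping in the decomposition: one must verify that $g$ has no jump at $\eta_r$, so that $\eta_r$ lies strictly inside a constancy block of $g$ and the explicit two-value form of $\widetilde g_{\eta_r}^{s,e}$ is valid. Everything else is elementary algebra plus one application of Cauchy--Schwarz, so I expect the main obstacle to be notational—keeping the three lengths $a$, $m$, $L$ and the boundary averages straight—rather than conceptual.
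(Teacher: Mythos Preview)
Your proof is correct and follows essentially the same approach as the paper: both define the auxiliary sequence $g$ by flattening $f$ on $(s,\eta_r]$ to the value $f_{\eta_{r+1}}$, bound the perturbation $|\tf_t-\widetilde g^{s,e}_t|$ by $\kappa_r\sqrt{\eta_r-s}$, and then control the ratio $|\widetilde g^{s,e}_{\eta_r}|/|\widetilde g^{s,e}_{\eta_{r+1}}|$ by $\sqrt{aL/((a+m)(m+L))}\le c_1$ using $a\le c_1^2\Delta\le c_1^2 m$. Your use of Cauchy--Schwarz with $\psi^{s,e}_t$ for the perturbation step is a clean way to phrase what the paper does by direct computation, and your explicit caveat that $c_1\le 1$ is needed for the final constant $2$ is exactly the implicit assumption the paper makes as well.
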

This can be useful in testing when there are exactly two change points with 
$$ \eta_r-s \le \lambda^2 \kappa_r^{-2}, \quad e-\eta_{r+1} \le \lambda^2 \kappa_{r+1}^{-2} . $$ 
It is also useful to show $\eta_r -s \ge \Delta/4 $ for some absolute constant $c$ when
$$|\widetilde f^{s,e}_{\eta_r}| \ge \max_{s\le t \le e} |\widetilde f^{s,e}_{t}| -2 \lambda.$$

\begin{proof}

Consider the sequence $\{g_t\}_{t=s+1}^e $ be such that 
$$ 
g_t=
\begin{cases}
f_{\eta_{r+1}}  \quad \text{if} \quad s +1\le  t\le \eta_{r},
\\
f_t \quad \text{if} \quad \eta_{r}+1 \le t \le e. 
\end{cases}
$$
For any  $t\ge \eta_r$,
\begin{align*}
\widetilde f^{s,e}_{\eta_r} - \widetilde g^{s,e}_{\eta_r} 
&=\sqrt { \frac{(e-s)-t}{(e-s)(t-s)} } (\eta_r-s) (f_{\eta_{r+1}} -f_{\eta_{r}})  \le \sqrt{\eta_r-s} \kappa_r
\end{align*}
Thus 
\begin{align*}
|\widetilde f^{s,e}_{\eta_r} | &
\le  |\widetilde g^{s,e}_{\eta_r}  |+  \sqrt{\eta_r-s} \kappa_r
\\
&\le \sqrt { \frac{(\eta_r-s)  (e-\eta_{r+1})  }{   ( \eta_{r+1}-s)  (e-\eta_r)    } }|\widetilde g^{s,e}_{\eta_{r+1}} |+  \sqrt{\eta_r-s} \kappa_r
\\
&\le \sqrt { \frac{c_1^2\Delta }{ \Delta}}|\widetilde g^{s,e}_{\eta_{r+1}} |  +  \sqrt{\eta_r-s} \kappa_r
\\
&\le c_1 |\widetilde f^{s,e}_{\eta_{r+1}} |  + 2\sqrt{\eta_r-s} \kappa_r.
\end{align*}
where the first inequality follows from the observation that the first change point of $g_t$ in $[s,e]$ is at $\eta_{r+1}$.
\end{proof}

\subsection{Properties of the covariance CUSUM statistics}
\label{sec:properties.cusum}
All of our consistency results  heavily rely on the properties of  population
quantity of the CUSUM statistic.  In the covariance change point detection
problem, however, it is not trivial to analyze the properties of the function
$t \mapsto \bigl\|\tSigma_t\bigr\|_{\mathrm{op}}$ in the multiple change point
case.  For example, it is difficult to determine the regions of monotonicity of
$\bigl\|\tSigma_t\bigr\|_{\mathrm{op}}$ as a function of $t$ as is done in
\citet[][Lemma~2.2]{venkatraman1992consistency}.  As a remedy, we introduce the
concept of \emph{shadow vector}, which is defined as a maximizer of the operator
norm of the CUSUM statistics in all the following results.  In this way, we turn the covariance change point detection problem into a mean change point detection problem.   

For any $v\in\mathbb R^p$ with $\|v\|=1$, let $Y_i (v)  = (v^{\top}X_i)^2$ and $f_i (v) = v^{\top}\Sigma_iv$, $i = 1, \ldots, n$.  Note that both $\{Y_i(v)\}$ and $\{f_i(v)\}$ are univariate sequences, we hence have the corresponding CUSUM statistics defined below
	\begin{align}
	\widetilde Y_{t}^{s,e}(v) &=\sqrt{\frac{e-t}{(e-s) (t-s)}}\sum_{i=s+1}^{t}Y_i(v)- \sqrt{\frac{t-s}{(e-s) (e-t)}} \sum_{i=t+1}^{e} Y_i(v) 
	,\label{eq:project sample}\\
	\widetilde f_{t}^{s,e} (v)&=\sqrt{\frac{e-t}{(e-s) (t-s)}}\sum_{i=s+1}^{t}f_i(v)- \sqrt{\frac{t-s}{(e-s) (e-t)}} \sum_{i=t+1}^{e} f_i(v) \label{eq:project population}.
	\end{align}

The key rationale of the CUSUM based BS algorithm or any variants thereof being a powerful tool selecting the change points is that the population version of the CUSUM statistic achieving its maxima at the true change points.  In Lemma~\ref{lemma:maximized at change points}, we show the same holds for the covariance CUSUM statistic.

\begin{lemma}\label{lemma:maximized at change points}
Assume $(s,e)\cap \{\eta_k\}_{k=1}^K \not = \emptyset$ and \Cref{assume:model}.  The quantity $\bigl\|\widetilde \Sigma_t ^{s,e} \bigr\|_{\mathrm{op}}$ as a function of $t$ achieves its maxima at the true change points, i.e.
	$$
	\argmax_{t = s+1, \ldots, e-1} \bigl\|\tSigma_t\bigr\|_{\mathrm{op}} \cap \{\eta_k\}_{k=1}^K \neq \emptyset.
        $$
\end{lemma}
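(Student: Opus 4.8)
The plan is to reduce this multidimensional statement to the one-dimensional mean change point setting via a \emph{shadow vector} and then invoke \Cref{lemma:continuation}. First I would let $t^{*}$ be any maximizer of $t\mapsto\|\widetilde\Sigma_t^{s,e}\|_{\mathrm{op}}$ over $\{s+1,\ldots,e-1\}$ and let $v\in\mathcal S^{p-1}$ be a unit vector with $|v^{\top}\widetilde\Sigma_{t^{*}}^{s,e}v|=\|\widetilde\Sigma_{t^{*}}^{s,e}\|_{\mathrm{op}}$, i.e.\ a leading singular vector of $\widetilde\Sigma_{t^{*}}^{s,e}$ (which is symmetric, so the operator norm coincides with $\max_{\|u\|=1}|u^{\top}\widetilde\Sigma_{t^{*}}^{s,e}u|$). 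The key observation is that, because the population CUSUM is linear in the summands $\Sigma_i$, projecting onto $v$ turns it into the univariate CUSUM of the scalar sequence $f_i(v)=v^{\top}\Sigma_i v$; that is, $\widetilde f_t^{s,e}(v)=v^{\top}\widetilde\Sigma_t^{s,e}v$ for every $t$, with $\widetilde f_t^{s,e}(v)$ as defined in \eqref{eq:project population}.

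Next I would record two facts about $\{f_i(v)\}$. On one hand, it is piecewise constant with change points contained in $\{\eta_k\}_{k=1}^{K}\cap(s,e)$, so every change point of $\{f_i(v)\}$ is a genuine covariance change point lying in $(s,e)$. On the other hand, if $\{f_i(v)\}$ has no change point inside $(s,e)$, then $\widetilde f_t^{s,e}(v)\equiv 0$, which forces $\|\widetilde\Sigma_{t^{*}}^{s,e}\|_{\mathrm{op}}=|\widetilde f_{t^{*}}^{s,e}(v)|=0$; in that degenerate case $\widetilde\Sigma_t^{s,e}$ vanishes identically, every $t$ is a maximizer, and the conclusion is immediate because $(s,e)$ contains a change point by assumption. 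Hence I may assume $\{f_i(v)\}$ does change inside $(s,e)$.

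The main step is then to apply \Cref{lemma:continuation} to the univariate sequence $\{f_i(v)\}$ on $[s,e]$: it yields a change point $\eta_k$ of $\{f_i(v)\}$ --- and therefore a true change point with $\eta_k\in(s,e)$ --- at which $|\widetilde f_{\cdot}^{s,e}(v)|$ attains its maximum over $[s,e]$. Chaining the resulting inequalities closes the argument:
\[
\|\widetilde\Sigma_{t^{*}}^{s,e}\|_{\mathrm{op}}=|\widetilde f_{t^{*}}^{s,e}(v)|\le\max_{s\le t\le e}|\widetilde f_t^{s,e}(v)|=|\widetilde f_{\eta_k}^{s,e}(v)|=|v^{\top}\widetilde\Sigma_{\eta_k}^{s,e}v|\le\|\widetilde\Sigma_{\eta_k}^{s,e}\|_{\mathrm{op}}\le\|\widetilde\Sigma_{t^{*}}^{s,e}\|_{\mathrm{op}},
\]
where the first inequality uses $t^{*}\in[s,e]$ and the last one holds because $t^{*}$ maximizes the operator norm. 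All the displayed quantities are thus equal, so $\eta_k$ also maximizes $\|\widetilde\Sigma_t^{s,e}\|_{\mathrm{op}}$; since $\eta_k\in\{\eta_k\}_{k=1}^{K}$, this proves $\argmax_{t=s+1,\ldots,e-1}\|\widetilde\Sigma_t^{s,e}\|_{\mathrm{op}}\cap\{\eta_k\}_{k=1}^{K}\neq\emptyset$.

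I expect the only genuinely delicate point to be the reduction itself: verifying that the leading singular vector of $\widetilde\Sigma_{t^{*}}^{s,e}$ is exactly the direction along which the scalar CUSUM inherits the operator-norm value, and handling the degenerate case in which this projection is constant. Once the identity $\widetilde f_t^{s,e}(v)=v^{\top}\widetilde\Sigma_t^{s,e}v$ and the subset relation for the change points of $\{f_i(v)\}$ are in place, the statement follows from the one-dimensional result of \cite{venkatraman1992consistency} recorded in \Cref{lemma:continuation}, with essentially no further computation.
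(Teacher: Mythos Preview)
Your proposal is correct and follows essentially the same route as the paper: pick a maximizer $t^*$, choose the shadow vector $v$ realizing $\|\widetilde\Sigma_{t^*}^{s,e}\|_{\mathrm{op}}$, reduce to the univariate CUSUM of $f_i(v)=v^\top\Sigma_i v$, and invoke the one-dimensional result (Lemma~2.2 of \cite{venkatraman1992consistency}, recorded here as \Cref{lemma:continuation}) that the univariate CUSUM is maximized at a change point of $\{f_i(v)\}\subset\{\eta_k\}$. The paper phrases this as a contradiction argument rather than your direct chain of equalities, and it does not separately address the degenerate case $\widetilde f^{s,e}_\cdot(v)\equiv 0$ as you do, but the substance is the same.
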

\begin{proof}
For the sake of contradiction suppose that there exists $t^* \in (s,e) \setminus \{\eta_k\}_{k=1}^K$  such that 
	\[
	t^* \in \argmax_{t = s+1, \ldots, e-1} \bigl\| \widetilde \Sigma^{s,e}_{t}\bigr\|_{\mathrm{op}},
	\]
	and
	\[
	\bigl \|\widetilde\Sigma_{t^*}^{s,e} \bigr\|_{\mathrm{op}} > \max_{k:\,  \eta_k\in (s, e)}\bigl\| \widetilde \Sigma_{\eta_k}^{s,e}\bigr\|_{\mathrm{op}}.
	\]
	Let $v\in \argmax_{\|u\|=1} |u^{\top} \widetilde \Sigma_{t^*}^{s,e} u|$, and consider the sequence $\{f_i (v)\}_{i=1}^n = \{ v^{\top}\Sigma_i v\}_{i=1}^n$. By the above display, we have
	\begin{equation}\label{eq:t star} 
	\bigl|\tf_{t^*} (v)\bigr|  =  \bigl\|\tSigma_{t^*} \bigr\|_{\mathrm{op}}  > \max_{k:\, \eta_k \in (s, e)}  \bigl\|\tSigma_{\eta_k} \bigr\|_{\mathrm{op}} \ge \max_{k: \, \eta_k \in (s, e)}  \bigl|\widetilde f_{\eta_k} (v)\bigr|, 
	\end{equation}
	where $\widetilde f_t^{s, e}(v)$ is defined in \eqref{eq:project population}.  It follows from Lemma 2.2 of \cite {venkatraman1992consistency}, the quantities  $\bigl| \widetilde f_t^{s, e}(v) \bigr|$ are maximized at the change points of the time series $\{ f_t(v)\}_{t=s+1}^e$.  Note that the change points of the sequence $\{ f_t(v)\}_{t=s+1}^e$ are a subset of $\{\eta_{k}\}_{k=1}^K$.  This contradicts \eqref{eq:t star}.
\end{proof}

Lemma~\ref{lemma:maximized at change points} shows that the population version of the covariance CUSUM statistic is maximized at the true change points in terms of the operator norm.  In Lemma~\ref{lemma:lower bound of CUMSUM} below, we give the lower bound of the maxima thereof.  One can interpret it as the signal strength.

\begin{lemma}\label{lemma:lower bound of CUMSUM}
Under Assumption \ref{assume:model}, let $0\le s < \eta_k < e \le n $ be any interval satisfying 
$$ \min \{\eta_k-s, e-\eta_k \}\ge c_1\Delta.$$ Then for any $0 <\delta < (c_1/8)\Delta$,
	\[
	\max_{t = \lceil s+\delta \rceil, \ldots, \lfloor e-\delta \rfloor} \| \widetilde \Sigma_{t}^{s,e}\|_{\mathrm{op}}  \ge (c_1/2)\kappa\Delta (e-s)^{-1/2} .
	\]
\end{lemma}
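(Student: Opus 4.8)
The plan is to reduce the matrix statement to a one-dimensional CUSUM bound and then exploit the piecewise-linear structure of the numerator of the CUSUM. First I would pick a \emph{shadow vector}: let $v\in\mathcal{S}^{p-1}$ be a leading eigenvector of the jump matrix $\Sigma_{\eta_k}-\Sigma_{\eta_k+1}$, so that $|v^{\top}(\Sigma_{\eta_k}-\Sigma_{\eta_k+1})v|=\|\Sigma_{\eta_k}-\Sigma_{\eta_k+1}\|_{\mathrm{op}}\ge\kappa$. Writing $g_i=f_i(v)=v^{\top}\Sigma_i v$, the sequence $\{g_i\}$ is piecewise constant with change points among $\{\eta_j\}$ and a jump of size $\ge\kappa$ at $\eta_k$, and by definition $v^{\top}\widetilde\Sigma_t^{s,e}v=\widetilde f_t^{s,e}(v)=:\widetilde g_t$. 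Since $\|\widetilde\Sigma_t^{s,e}\|_{\mathrm{op}}\ge|\widetilde g_t|$, it suffices to produce a point $t$ in the admissible range with $|\widetilde g_t|\ge (c_1/2)\kappa\Delta(e-s)^{-1/2}$.

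The crux is that, when $(s,e)$ contains several change points, the value $\widetilde g_{\eta_k}$ itself may be tiny (the contributions of the other change points can cancel the jump at $\eta_k$ in the two block averages), so evaluating at $\eta_k$ is not enough. To get around this I would rewrite the CUSUM as $\widetilde g_t = N_t\big/\sqrt{(e-s)(t-s)(e-t)}$, where $N_t=(e-s)\sum_{i=s+1}^{t}g_i-(t-s)\sum_{i=s+1}^{e}g_i$. The map $t\mapsto N_t$ is piecewise linear with $N_s=N_e=0$, and on any maximal block on which $g\equiv a$ its slope is exactly $(e-s)(a-\bar g)$ with $\bar g=(e-s)^{-1}\sum_{i=s+1}^{e}g_i$. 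Consequently the slope of $N$ changes by $(e-s)(g_{\eta_k+1}-g_{\eta_k})$, of magnitude $\ge (e-s)\kappa$, across $\eta_k$, while $N$ is affine on the two blocks adjacent to $\eta_k$; by the hypothesis $\min\{\eta_k-s,e-\eta_k\}\ge c_1\Delta$ together with the minimal spacing $\Delta$, each of these blocks (intersected with $[s,e]$) has length at least $c_1\Delta$.

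I would then run a discrete second-difference (equivalently, a secant ``sag'') estimate localized at the kink. Choosing $h\asymp c_1\Delta$ with the two points $\eta_k\pm h$ lying inside the two affine blocks \emph{and} inside $[\lceil s+\delta\rceil,\lfloor e-\delta\rfloor]$---which is possible because $\delta<(c_1/8)\Delta$ while the margins are $\ge c_1\Delta$---linearity on each side gives $N_{\eta_k-h}-2N_{\eta_k}+N_{\eta_k+h}=h(e-s)(g_{\eta_k+1}-g_{\eta_k})$, whose absolute value is $\ge h(e-s)\kappa\asymp c_1\Delta(e-s)\kappa$. The triangle inequality then forces $\max\{|N_{\eta_k-h}|,|N_{\eta_k}|,|N_{\eta_k+h}|\}\gtrsim c_1\Delta(e-s)\kappa$. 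Finally I convert back using $\sqrt{(t-s)(e-t)}\le (e-s)/2$, i.e.\ $|\widetilde g_t|\ge 2|N_t|(e-s)^{-3/2}$, at whichever of the three admissible points attains this maximum; this yields $\max_t|\widetilde g_t|\ge (c_1/2)\kappa\Delta(e-s)^{-1/2}$, and hence the claim, since $\eta_k$ and $\eta_k\pm h$ all lie in the range over which the maximum is taken.

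The main obstacle is exactly the cancellation in the multiple-change-point case, which rules out the naive evaluation at $\eta_k$ and which \Cref{lemma:maximized at change points} (maximality at a change point) alone cannot quantify. Passing to the piecewise-linear numerator $N_t$ isolates the jump at $\eta_k$ as a slope discontinuity and turns the problem into an elementary convexity/second-difference estimate, with the minimal-spacing assumption guaranteeing an affine window of width $\ge c_1\Delta$ about $\eta_k$. The only delicate bookkeeping is keeping the three evaluation points inside $[\lceil s+\delta\rceil,\lfloor e-\delta\rfloor]$ while matching the constant $c_1/2$: near a boundary (a margin close to $c_1\Delta$) one uses $N_s=0$ or $N_e=0$ as a free node in the second difference, which recovers the stated constant. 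As a sanity check, in the single-change-point case \Cref{lemma:one change point basics} gives $|\widetilde g_{\eta_k}|=\sqrt{(\eta_k-s)(e-\eta_k)/(e-s)}\,\kappa\ge c_1\Delta(e-s)^{-1/2}\kappa$, consistent with and stronger than the target, confirming that the factor $c_1/2$ leaves exactly the room needed for the contamination handled above.
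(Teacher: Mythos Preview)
Your reduction via the shadow vector $v$ (the leading eigenvector of $\Sigma_{\eta_k}-\Sigma_{\eta_k+1}$) and the inequality $\|\widetilde\Sigma_t^{s,e}\|_{\mathrm{op}}\ge|\widetilde f_t^{s,e}(v)|$ is exactly what the paper does. Where you diverge is in the one-dimensional step: the paper simply invokes Lemma~2.4 of \cite{venkatraman1992consistency} as a black box to obtain $\max_t|\widetilde f_t^{s,e}(v)|\ge (c_1/2)\kappa\Delta(e-s)^{-1/2}$, whereas you supply a self-contained argument via the piecewise-linear numerator $N_t$ and a second-difference estimate localized at the kink $\eta_k$. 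Your route is genuinely more elementary and transparent --- it makes explicit \emph{why} the minimal-spacing assumption buys an affine window of width $\ge c_1\Delta$ on each side, and it explains directly why the multi-change-point cancellation cannot kill the signal. The paper's route is shorter but outsources the substance.

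One point worth tightening: with the symmetric choice $h=\eta_k\pm h$ the admissible-range constraint $\eta_k\pm h\in[\lceil s+\delta\rceil,\lfloor e-\delta\rfloor]$ forces $h\le (7c_1/8)\Delta$ in the worst case, and the triangle inequality then loses a further factor of $4$, so the naive bound comes out as $(7c_1/16)\kappa\Delta(e-s)^{-1/2}$ rather than $(c_1/2)$. Your asymmetric ``free node'' trick (using $N_s=0$ or $N_e=0$) does recover a better constant, but you should write it out: with endpoints $s,\eta_k,\eta_k+h$ and $N_s=0$ one gets $|N_{\eta_k+h}-(1+h/(\eta_k-s))N_{\eta_k}|=h(e-s)|g_{\eta_k+1}-g_{\eta_k}|$, and the resulting bound on $\max\{|N_{\eta_k}|,|N_{\eta_k+h}|\}$ indeed clears $c_1/2$ after dividing by $(e-s)^{3/2}/2$. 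Since the downstream applications in the paper use this lemma with the specific constant $c_1/2$ (e.g.\ $c_1=3/4$ in the BSOP proof), this bookkeeping is not optional.
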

\begin{proof}  Let
	\[
	v \in \argmax_{\|u\|=1} \bigl| u^{\top}\bigl( \Sigma_{\eta_k} - \Sigma_{\eta_{k+1}} \bigr)u\bigr|.
	\]
Therefore
	$
	\bigl\| \widetilde \Sigma^{s,e}_{t}\bigr\|_{\mathrm{op}} \ge \bigl|\widetilde f^{s,e}_{t}  (v)\bigr|.
	$
	Since
	\[
	\bigl|f_{\eta_k} (v) - f_{\eta_{k +1}} (v)\bigr|  = \bigl|v^{\top}  \bigl( \Sigma_{\eta_k}-\Sigma_{\eta_{k+1}} \bigr)v\bigr| = \bigl\|\Sigma_{\eta_k}-\Sigma_{\eta_{k+1}} \bigr\|_{\mathrm{op}}\ge \kappa,
	\]
we have
	\[
	\max_{t = \lceil s+\delta      \rceil, \ldots, \lfloor e-\delta   \rfloor} \| \widetilde \Sigma_{t}^{s,e}\|_{\mathrm{op}} \geq \max_{t = \lceil s+\delta \rceil, \ldots, \lfloor e-\delta \rfloor} |\widetilde f^{s,e}_{t}  (u)| \ge (c_1/2)\kappa\Delta (e-s)^{-1/2},
	\]
	where the second inequality follows from the same arguments in  \citet[][Lemma~2.4]{venkatraman1992consistency} by regarding $\widetilde f^{s,e}_{t}(u)$ as the CUSUM statistic for a univariate time series.
\end{proof}

In Lemma~\ref{lemma:reduction to 1d}, we show that we can conform the problem of change point detection on covariance to the one on mean via shadow vectors.  This translation allows us to convert a high-dimensional covariance problem to a univariate mean problem.

\begin{lemma}\label{lemma:reduction to 1d}
Under the same assumptions as \Cref{lemma:lower bound of CUMSUM}, let
	$$
	b\in \argmax_{t = \lceil s+  \delta         \rceil, \ldots, \lfloor e- \delta     \rfloor} \bigl\| \tS_{t} \bigr \|_{\mathrm{op}},
	$$ and denote a shadow vector by
	\begin{equation}\label{eq:shadow vector}
	v\in \argmax_{\|u\|=1} \bigl|u^{\top} \tS_b u\bigr|.
	\end{equation}
       Then
	$$
	b \in\argmax_{t = \lceil s+  \delta      \rceil, \ldots, \lfloor e- \delta     \rfloor} \bigl| \widetilde Y^{s,e}_{t} (v) \bigr|.
	$$
\end{lemma}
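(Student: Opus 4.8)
The plan is to reduce the statement to a single algebraic identity together with the elementary characterization of the operator norm of a symmetric matrix. The crucial observation is that, for the fixed shadow vector $v$, the univariate CUSUM of the projected sequence $\{Y_i(v)\}_{i=1}^n$ is exactly the quadratic form of the matrix-valued covariance CUSUM in the direction $v$. Concretely, since $Y_i(v) = (v^{\top}X_i)^2 = v^{\top}(X_iX_i^{\top})v$ and the weights defining $\tY_t(v)$ in \eqref{eq:project sample} are identical to those defining $\tS_t$ in \Cref{def-1}, linearity of the summation gives, for every $t \in \{\lceil s+\delta\rceil,\ldots,\lfloor e-\delta\rfloor\}$,
\[
\tY_t(v) = v^{\top}\tS_t v .
\]
This identity is what lets me translate a statement about maximizing the univariate series into a statement about quadratic forms of the matrix CUSUM, both taken over the same admissible index set.

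With this in hand, I would assemble a short chain of inequalities. For an arbitrary admissible $t$, the definition of the operator norm used throughout the paper, $\|M\|_{\mathrm{op}} = \max_{\|u\|=1}|u^{\top}Mu|$ (valid because each $\tS_t$ is a real linear combination of the symmetric matrices $X_iX_i^{\top}$, hence symmetric), yields $|\tY_t(v)| = |v^{\top}\tS_t v| \le \|\tS_t\|_{\mathrm{op}}$. Because $b$ was selected to maximize $t \mapsto \|\tS_t\|_{\mathrm{op}}$ over this very range, I also have $\|\tS_t\|_{\mathrm{op}} \le \|\tS_b\|_{\mathrm{op}}$. Finally, by the defining property of the shadow vector in \eqref{eq:shadow vector}, the bound $|v^{\top}\tS_b v| \le \|\tS_b\|_{\mathrm{op}}$ is attained with equality, so that $|\tY_b(v)| = |v^{\top}\tS_b v| = \|\tS_b\|_{\mathrm{op}}$. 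Concatenating these three facts gives $|\tY_t(v)| \le |\tY_b(v)|$ for every admissible $t$, which is precisely the claim that $b \in \argmax_{t} |\tY_t(v)|$.

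I do not expect a genuine analytic obstacle here; the proof is essentially bookkeeping, and the subtlety is purely conceptual. The one point that must be handled with care is the equality case: the generic bound $|u^{\top}\tS_b u| \le \|\tS_b\|_{\mathrm{op}}$ has to be tight exactly at $u=v$, and this is guaranteed only because $v$ is taken to be a maximizer of $u \mapsto |u^{\top}\tS_b u|$ for the matrix $\tS_b$ evaluated at the \emph{same} index $b$ that maximizes the operator norm — using the leading eigenvector of any other $\tS_t$ would break the chain. It is also worth flagging explicitly that the symmetry of the covariance CUSUM is what validates $\|\tS_t\|_{\mathrm{op}} = \max_{\|u\|=1}|u^{\top}\tS_t u|$, so that the intermediate inequality $|\tY_t(v)| \le \|\tS_t\|_{\mathrm{op}}$ holds uniformly in $t$ and not merely at $b$. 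The hypotheses inherited from \Cref{lemma:lower bound of CUMSUM} are not needed for the argument itself; they serve only to ensure that the admissible index set is nonempty and that $b$ and the shadow vector $v$ are well defined.
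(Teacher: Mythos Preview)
Your proof is correct and follows exactly the same route as the paper: the paper's entire proof is the single chain $|\widetilde Y^{s,e}_{b}(v)| = \|\tS_b\|_{\mathrm{op}} \ge \max_t \|\tS_t\|_{\mathrm{op}} \ge \max_t |\widetilde Y^{s,e}_{t}(v)|$, which is precisely the three-step inequality you spell out. Your version is simply more explicit about the identity $\tY_t(v) = v^{\top}\tS_t v$ and about why symmetry of $\tS_t$ justifies the operator-norm characterization, but the argument is identical.
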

\begin{proof}
It suffices to show that 
	\[
	\bigl|\widetilde Y^{s,e}_{b}(v)\bigr|  =  \bigl\|\tS_b \bigr\|_{\mathrm{op}}  \ge \max_{t = \lceil s+ \delta    \rceil, \ldots, \lfloor e- \delta   \rfloor} \bigl \|\tS_t \bigr\|_{\mathrm{op}} \ge \max_{t = \lceil s+ \delta       \rceil, \ldots, \lfloor e- \delta    \rfloor} \bigl|\widetilde Y^{s,e}_{t}(v)\bigr|. 
	\]
 \end{proof}
 
For the shadow vector $v$ defined in \eqref{eq:shadow vector}, it is tempting to argue that we can estimate $v$ and then use $\{Y_i(v)\}_{i=1}^n$ as the new sequence to derive an upper bound for the rate of localization.  This ideal approach, however, suffers from two non-trivial obstacles.
	\begin{itemize}
	\item We don't have any guarantee that the estimation of $v$ is consistent.  This is because estimating the first principle direction of any sample matrix in general requires a non-vanishing gap between the first and the second eigenvalues of the corresponding  population matrix.  Since $b$ depends on the data, without more involved additional assumption, it is difficult to show $\tS_b$ converges to a population quantity as $n\to \infty$.

	\item Suppose $v$ given in \eqref{eq:shadow vector} has a well defined population quantity, say $\mathbb{E}(v)$.  The estimation of $v$ depends on $\cX$ and therefore $\mathbb{E}(Y_i(v)) \neq  f_i(\mathbb{E}(v))$. 
	\end{itemize}
	On one hand our knowledge of the population version of the shadow vector $v$ is very limited; on the other hand, in Lemma~\ref{coro:shadow properties} below we show that without estimating the population shadow vector, the CUSUM statistics and its corresponding population quantity are close enough  and that the maximum of the CUSUM statistic is detectable.

\begin{lemma}\label{coro:shadow properties}
Under the same assumptions as  \Cref{lemma:lower bound of CUMSUM}, assume
	\[
	\max_{t = \lceil s+\delta \rceil, \ldots, \lfloor e-\delta  \rfloor}
	\bigl\| \tS_t  -\tSigma_t \bigr\|_{\mathrm{op}} \le \lambda,
		\]
	where $\lambda_1 > 0$.  For the shadow vector $v$ defined in \eqref{eq:shadow vector}, then
		\[
	\max_{t = \lceil s+\delta    \rceil, \ldots, \lfloor e -\delta  \rfloor}\bigl|\widetilde Y_t^{s,e} (v)  \bigr| \ge (c_1/2)\kappa\Delta (e-s)^{-1/2}-\lambda. 
	\]
\end{lemma}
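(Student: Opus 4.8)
The plan is to assemble the bound from two facts already in hand about the covariance CUSUM statistic: the reduction identity of \Cref{lemma:reduction to 1d} and the signal lower bound of \Cref{lemma:lower bound of CUMSUM}. The starting observation is the elementary identity $\widetilde Y_t^{s,e}(u) = u^{\top}\tS_t u$, valid for every fixed unit vector $u$: indeed, $\widetilde Y_t^{s,e}(u)$ in \eqref{eq:project sample} is the univariate CUSUM of the sequence $\{(u^{\top}X_i)^2\}_i = \{u^{\top}X_iX_i^{\top}u\}_i$, and the map $X_iX_i^{\top}\mapsto u^{\top}X_iX_i^{\top}u$ is linear, so it commutes with the CUSUM weights. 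Evaluating this identity at the shadow vector $v$ of \eqref{eq:shadow vector} and at $t=b$, the defining property $v\in\argmax_{\|u\|=1}|u^{\top}\tS_b u|$ gives $\bigl|\widetilde Y_b^{s,e}(v)\bigr| = \bigl|v^{\top}\tS_b v\bigr| = \bigl\|\tS_b\bigr\|_{\op}$.

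First I would invoke \Cref{lemma:reduction to 1d}, whose conclusion is exactly that $b$ also maximizes $t\mapsto\bigl|\widetilde Y_t^{s,e}(v)\bigr|$ over the admissible range $\lceil s+\delta\rceil\le t\le\lfloor e-\delta\rfloor$. Combined with the identity above and the fact that $b$ maximizes $\bigl\|\tS_t\bigr\|_{\op}$ by construction, this produces the chain
\[
\max_{t}\bigl|\widetilde Y_t^{s,e}(v)\bigr| = \bigl|\widetilde Y_b^{s,e}(v)\bigr| = \bigl\|\tS_b\bigr\|_{\op} = \max_t\bigl\|\tS_t\bigr\|_{\op},
\]
so it suffices to lower bound $\max_t\bigl\|\tS_t\bigr\|_{\op}$. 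Next I would pass from the sample CUSUM to the population CUSUM via the assumed uniform fluctuation bound: taking $t^{\star}$ to be a maximizer of $\bigl\|\tSigma_t\bigr\|_{\op}$, the reverse triangle inequality for the operator norm together with $\bigl\|\tS_{t^{\star}}-\tSigma_{t^{\star}}\bigr\|_{\op}\le\lambda$ yields $\max_t\bigl\|\tS_t\bigr\|_{\op}\ge\bigl\|\tS_{t^{\star}}\bigr\|_{\op}\ge\bigl\|\tSigma_{t^{\star}}\bigr\|_{\op}-\lambda=\max_t\bigl\|\tSigma_t\bigr\|_{\op}-\lambda$. Finally \Cref{lemma:lower bound of CUMSUM} supplies $\max_t\bigl\|\tSigma_t\bigr\|_{\op}\ge(c_1/2)\kappa\Delta(e-s)^{-1/2}$, and chaining the displays gives the claim.

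The argument is a short two-step assembly, so there is no genuine analytic obstacle: the work has all been front-loaded into the two cited lemmas. The one conceptual point worth emphasizing — and the reason the proof stays clean — is that the shadow vector $v$ is \emph{data dependent}, so no concentration inequality for a single fixed direction is available to bound $\widetilde Y_t^{s,e}(v)-\widetilde f_t^{s,e}(v)$ directly. The detour through the operator norm is precisely what circumvents this: the hypothesis controls $\bigl\|\tS_t-\tSigma_t\bigr\|_{\op}$ \emph{uniformly over all directions}, and \Cref{lemma:reduction to 1d} guarantees that the operator-norm maximizer and the projected-CUSUM maximizer coincide, so the random choice of $v$ never forces me to estimate a population direction or to invoke an eigengap condition.
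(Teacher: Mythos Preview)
Your proof is correct and follows essentially the same route as the paper's: identify $\max_t|\widetilde Y_t^{s,e}(v)|$ with $\max_t\|\tS_t\|_{\op}$ via the shadow-vector identity and \Cref{lemma:reduction to 1d}, pass to $\max_t\|\tSigma_t\|_{\op}$ by the operator-norm triangle inequality and the assumed fluctuation bound, then finish with \Cref{lemma:lower bound of CUMSUM}. The paper additionally records the pointwise bound $|\widetilde Y_t^{s,e}(v)-\widetilde f_t^{s,e}(v)|\le\|\tS_t-\tSigma_t\|_{\op}\le\lambda$ as a preliminary observation, but this is not needed for the stated conclusion and your omission of it is harmless.
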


\begin{proof}
Observe that for all compatible $t$ and $v$,
	\[
	\bigl|\widetilde Y_t^{s,e} (v)  -\widetilde f_t ^{s,e} (v) \bigr|  = \bigl|v^{\top} \bigl(  \widetilde S_t^{s,e}  - \widetilde \Sigma_t^{s,e}  \bigr) v \bigr| \leq \bigl\| \tS_t  -\tSigma_t \bigr\|_{\mathrm{op}} \leq \lambda.
	\]

In addition, for the shadow vector $v$, we have
	\begin{align*}
	& \max_{t = \lceil s+\delta  \rceil, \ldots, \lfloor e-\delta \rfloor} \bigl|\widetilde Y_t^{s,e} (v) \bigr |  =  \max_{t = \lceil s+\delta  \rceil, \ldots, \lfloor e-\delta \rfloor} \bigl\|\widetilde S_t^{s,e} \bigr\|_{\mathrm{op}} \\
	\geq & \max_{t = \lceil s+\delta \rceil, \ldots, \lfloor e-\delta\rfloor} \bigl\|\tSigma_t \bigr\|_{\mathrm{op}} -\lambda_1 \ge (c_1/2)\kappa\Delta (e-s)^{-1/2}-\lambda,
	\end{align*}
	where the last inequality follows from Lemma~\ref{lemma:lower bound of CUMSUM}.	
\end{proof}

We finally prove a simple property  of covariance CUSUM statistics.
\begin{lemma}\label{lemma:size of boundary}
$ \|\widetilde \Sigma_t^{s,e} \|_{\op} \le 2 \sqrt {\min\{e-t,t-s \}} B^2 $ for any $t\in (s, e)$.

\end{lemma}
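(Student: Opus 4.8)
The plan is to exploit the symmetry of $\tSigma_t$ and reduce the operator-norm bound to a one-dimensional statement about the CUSUM of a nonnegative scalar sequence. First I would note that, since each $\Sigma_i$ is symmetric, so is $\tSigma_t$, and therefore
\[
\|\tSigma_t\|_{\op} = \sup_{\|v\|=1} |v^{\top}\tSigma_t v|.
\]
Fix a unit vector $v$ and set $f_i(v) = v^{\top}\Sigma_i v$, as in \eqref{eq:project population}. Then $v^{\top}\tSigma_t v = \widetilde f_t^{s,e}(v)$, the univariate CUSUM of the scalar sequence $\{f_i(v)\}$. Two facts about this sequence drive the argument: it is bounded, $0 \le f_i(v) \le \|\Sigma_i\|_{\op} \le 2B^2$, where the last inequality is the one recorded in \eqref{eq-kappa-B}; and, crucially, it is nonnegative because each $\Sigma_i$ is positive semidefinite.

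Next I would write out $\widetilde f_t^{s,e}(v)$ explicitly as
\[
\widetilde f_t^{s,e}(v) = \sqrt{\frac{e-t}{(e-s)(t-s)}}\sum_{i=s+1}^{t} f_i(v) - \sqrt{\frac{t-s}{(e-s)(e-t)}}\sum_{i=t+1}^{e} f_i(v),
\]
and use the nonnegativity of the $f_i(v)$ to bound the two pieces separately. Because both sums are nonnegative, the whole expression lies between $-\sqrt{\tfrac{t-s}{(e-s)(e-t)}}\sum_{i=t+1}^{e} f_i(v)$ and $\sqrt{\tfrac{e-t}{(e-s)(t-s)}}\sum_{i=s+1}^{t} f_i(v)$; bounding each $f_i(v)$ by $2B^2$ and counting terms, both of these one-sided bounds have absolute value at most
\[
2B^2\sqrt{\frac{(t-s)(e-t)}{e-s}}.
\]
Hence $|\widetilde f_t^{s,e}(v)| \le 2B^2\sqrt{(t-s)(e-t)/(e-s)}$ uniformly in $v$.

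Finally, I would invoke the elementary inequality $\frac{(t-s)(e-t)}{e-s} = \frac{(t-s)(e-t)}{(t-s)+(e-t)} \le \min\{t-s,\, e-t\}$ and take the supremum over $v$ to conclude $\|\tSigma_t\|_{\op} \le 2\sqrt{\min\{e-t,t-s\}}\,B^2$. The one point worth flagging—rather than a genuine obstacle—is the use of nonnegativity: a naive triangle-inequality bound that treats the two sums symmetrically would lose a factor of two (yielding $4B^2$), so it is essential to observe that the two contributions carry opposite signs and to bound them one-sidedly against the positive semidefiniteness of the $\Sigma_i$.
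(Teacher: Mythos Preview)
Your proof is correct and follows essentially the same route as the paper: the paper's one-line argument jumps directly to the bound $\|\tSigma_t\|_{\op}\le 2\sqrt{(e-t)(t-s)/(e-s)}\,B^2$ and then to the $\min$, which is exactly what you obtain by projecting onto a unit vector and exploiting the positive semidefiniteness of the $\Sigma_i$. The observation you flag about needing nonnegativity (rather than a naive triangle inequality) to get the constant $2$ instead of $4$ is precisely the implicit content of the paper's terse inequality.
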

\begin{proof}
Observe that 
\begin{align*}
 \|\widetilde \Sigma_t^{s,e} \|_{\op} \le 2\sqrt {\frac{ (e-t)(s-t)}{e-s}} B^2 \le  2 \sqrt {\min\{e-t,t-s \}} B^2 .
\end{align*}

\end{proof}

\end{document}